\newtheorem{thm}{Theorem}[section]
\newtheorem{lem}[thm]{Lemma}
\newtheorem{prop}[thm]{Proposition}
\newcommand{\txg}{\Big(1+ \f{\sqrt t}{x}\Big)^{\gamma_\nu}}
\newcommand{\tyg}{\Big(1+ \f{\sqrt t}{y}\Big)^{\gamma_\nu}}
\newcommand{\lesi}{\lesssim}
\newcommand{\f}{\frac}
\newcommand{\su}{\subset}
\newcommand{\vc}{\infty}
\title[Some singular integrals in the Laguerre setting]{Weighted norm inequalities of some singular integrals  associated with  Laguerre expansions}         % Enter your title between curly braces
\author[T. A. Bui]{The Anh Bui}
\address{School of Mathematical and Physical Sciences, Macquarie University, NSW 2109,
	Australia}
\email{the.bui@mq.edu.au}
\keywords{Laguerre function expansion; heat kernel; Riesz transform; square function; maximal function}
\begin{document}

\begin{abstract}
Let $\nu=(\nu_1,\ldots,\nu_n)\in (-1,\vc)^n$, $n\ge 1$, and let $\mathcal{L}_\nu$ be a self-adjoint extension of the differential operator 
\[
L_\nu := \sum_{i=1}^n \left[-\frac{\partial^2}{\partial x_i^2} + x_i^2 + \frac{1}{x_i^2}(\nu_i^2 - \frac{1}{4})\right]
\] 
on $C_c^\infty(\mathbb{R}_+^n)$ as the natural domain. In this paper, we investigate the weighted estimates of singular integrals in the Laguerre setting including the maximal function, the Riesz transform and the square functions associated to the Laguerre operator $\mathcal L_\nu$. In the special case of the Riesz transform, the paper completes the description of the Riesz transform for the full range of $\nu\in (-1,\vc)^n$ which significant improves the result in [J.  Funct. Anal.  244 (2007),  399--443] for $\nu_i\ge -1/2, \nu_i\notin (-1/2,1/2)$ for $i=1,\ldots, n$.

\end{abstract}
\date{}

\maketitle

\tableofcontents

\section{Introduction}\label{sec: intro}
We now recall some backgrounds on the Laguerre operators in \cite{NS}. For each $\nu =(\nu_1,\ldots, \nu_n)\in (-1,\vc)^n$, we consider the Laguerre differential operator 
\[
\begin{aligned}
	L_{\nu}%&:=\sum_{i=1}^n L_{\nu_i}\\
	&=\sum_{i=1}^n\Big[-\f{\partial^2 }{\partial x_i^2} + x_i^2+\frac{1}{x_i^2}\Big(\nu_i^2 - \frac{1}{4}\Big)\Big], \quad \quad x \in (0,\vc)^n.
	%& = -\Delta + |x|^2 + \sum_{i=1}^{d} \frac{1}{x_i^2}\Big(\nu_i^2 - \frac{1}{4}\Big), \quad \quad x \in (0,\vc)^n.
\end{aligned}
\]
The $j$-th partial derivative associated with $L_{\nu}$ is given by
\[
\delta_j = \frac{\partial}{\partial x_j} + x_j-\frac{1}{x_j}\Big(\nu_j + \f{1}{2}\Big).
\]
Then the  adjoint of $\delta_j$ in $L^2(\mathbb{R}^n_+)$ is
\[
\delta_j^* = -\frac{\partial}{\partial x_j} + x_j-\frac{1}{x_j}\Big(\nu_j + \f{1}{2}\Big).
\]
It is straightforward that
\[
\sum_{i=1}^{n} \delta_i^* \delta_i = L_{\nu} {-2}(|\nu| + n).
\]
Let $k = (k_1, \ldots, k_n) \in \mathbb{N}^n$, $\mathbb N = \{0, 1, \ldots\}$, and $\nu = (\nu_1, \ldots, \nu_n) \in (-1, \infty)^n$ be multi-indices. The Laguerre function $\varphi_k^{\nu}$ on $\mathbb{R}^n_+$ is defined as
\[
\varphi_k^{\nu}(x) = \varphi^{\nu_1}_{k_1}(x_1) \ldots \varphi^{\nu_n}_{k_n}(x_n), \quad x = (x_1, \ldots, x_n) \in \mathbb{R}^n_+,
\]
where $\varphi^{\nu_i}_{k_i}$ are the one-dimensional Laguerre functions
\[
\varphi^{\nu_i}_{k_i}(x_i) = \Big(\f{2\Gamma(k_i+1)}{\Gamma(k_i+\nu_i+1)}\Big)^{1/2}L_{\nu_i}^{k_i}(x_i^2)x_i^{\nu_i + 1/2}e^{-x_i^2/2}, \quad x_i > 0, \quad i = 1, \ldots, n,
\]
given $\nu > -1$ and $k \in \mathbb{N}$, $L_{\nu}^k$ denotes the Laguerre polynomial of degree $k$ and order $\nu$ outlined in  \cite[p.76]{L}.

Then it is well-known that the system $\{\varphi_k^{\nu} : k \in \mathbb{N}^n\}$ is an orthonormal basis of $L^2(\mathbb{R}^n_+, dx)$. Moreover, Each $\varphi_k^{\nu}$ is an eigenfunction of  $L_{\nu}$ corresponding to the eigenvalue of $4|k| + 2|\nu| + 2n$, i.e.,
\[
L_{\nu}\varphi_k^{\nu} = (4|k| + 2|\nu| + 2n) \varphi_k^{\nu},
\]
where $|\nu| = \nu_1 + \ldots + \nu_n$ and  $|k| = k_1 + \ldots + k_n$. The operator $L_{\nu}$ is positive and symmetric in $L^2(\mathbb{R}^n_+, dx)$.

If $\nu = \left(-\frac{1}{2}, \ldots, -\frac{1}{2}\right)$, then $L_{\nu}$ becomes the harmonic oscillator $-\Delta + |x|^2$ on $\mathbb R^n_+$. Note that the Riesz transforms related to the harmonic oscillator $-\Delta + |x|^2$ on $\mathbb R^n$ were investigated in \cite{ST, Th}.

The operator
\[
\mathcal{L}_{\nu}f = \sum_{k\in \mathbb{N}^{n}} (4|k| + 2|\nu| + 2n) \langle f, \varphi_{k}^\nu\rangle \varphi_{k}^\nu 
\]
defined on the domain $\text{Dom}\, \mathcal{L}_{\nu} = \{f \in L^2(\mathbb{R}^n_+): \sum_{k\in \mathbb{N}^{d}} (4|k| + 2|\nu| + 2n) |\langle f, \varphi_{k}^\nu\rangle|^2 < \infty\}$ is a self-adjoint extension of $L_{\nu}$ (the inclusion $C^{\infty}_c(\mathbb{R}^{d}_{+}) \subset \text{Dom}\, \mathcal{L}_{\nu}$ may be easily verified), has the discrete spectrum $\{4\ell + 2|\nu| + 2n: \ell \in \mathbb{N}\}$, and admits the spectral decomposition
\[
\mathcal{L}_{\nu}f = \sum_{\ell=0}^{\infty} (4\ell + 2|\nu| + 2n) P_{\nu,\ell}f,
\]
where the spectral projections are
\[
P_{\nu,\ell}f = \sum_{|k|=\ell} \langle f, \varphi_{k}^\nu\rangle \varphi_{k}^\nu.
\]

Moreover,
\begin{equation}\label{eq- delta and eigenvector}
	\delta_j \varphi_k^\nu =-2\sqrt{k_j} \varphi_{k-e_j}^{\nu+e_j}, \ \ \delta_j^* \varphi_k^\nu =-2\sqrt{k_j+1} \varphi_{k+e_j}^{\nu-e_j},
\end{equation}
where $\{e_1,\ldots, e_n\}$ is the standard basis for $\mathbb R^n$. Here and later on we use the convention that $\varphi_{k-e_j}^{\nu+e_j}=0$ if $k_j-1<0$ and $\varphi_{k+e_j}^{\nu-e_j}=0$ if $\nu_j-1<0$. See for example \cite{NS}.

Muckenhoupt and Stein \cite{MS} initiated the investigation into the boundedness of Riesz transforms within the context of orthogonal expansions. The harmonic analysis associated with orthogonal expansions has since attracted the attention of numerous authors, as evidenced in works such as \cite{Betancor, Betancor2, Muc, NS, NS2, ST, Th} and the references therein. Motivated by these contributions in the Laguerre setting, this paper aims to establish weighted norm inequalities for singular integrals, including the maximal function, the Riesz transform, and the square functions associated with the Laguerre operator $\mathcal{L}_\nu$. Before delving into the details, we introduce some notations. For  $\nu\in (-1,\vc)^n$, we set
$$
\mathcal I_\nu=\{j: -1<\nu_j<-1/2\}
$$
and then define
\begin{equation}\label{eq-gamma nu}
	\gamma_\nu = \begin{cases}
		\max\{-1/2-\nu_j: j\in \mathcal I_\nu\}, \ \ & \ \text{if} \  \mathcal I_\nu\ne \emptyset, \\
		0, \ \ &\ \text{if} \  \mathcal I_\nu= \emptyset.
	\end{cases}
\end{equation}
In the 1-dimensional case when $\nu \in (-1,\vc)$, we have 
 \[
 	\gamma_\nu = \begin{cases}
 	-1/2-\nu, \ \ &  -1<\nu<-1/2, \\
 	0, \ \ &\ \nu\ge -1/2.
 \end{cases}
 \]

We first consider the maximal function defined by
\[
\mathcal M_{\mathcal L_\nu} f =\sup_{t>0} |e^{-t\mathcal L_\nu}f|.
\]
For the 1-dimensional case when $n=1$, the weighted $L^p$-estimates (with power weights) were investigated in \cite[Theorem 4.1]{Betancor2} using the transference method. It was proved in \cite[Theorem 4.1]{Betancor2} that the maximal function $\mathcal{M}_{\mathcal{L}\nu}$ is bounded on $L^p_{x^\sigma}(\mathbb{R}_+)$ whenever $-1-p(\nu+3/2)<\sigma<p(\nu+3/2)-1$. This method does not seem to imply the weighted inequality with Muckenhoupt weights. Hence, our first main result surpasses this limitation to prove the Muckenhoupt weighted $L^p$ inequality for the maximal function. In what follows, for the $A_p$ and $RH_q$ classes of weights, we refer to Section 2.1 for the precise definitions.

\begin{thm}\label{main thm 0}
	Let $\nu \in (-1,\vc)$. Then the maximal function $\mathcal M_{\mathcal L_\nu}$ is bounded on $L^p_w(\mathbb R_+)$ for all $(\f{1}{\gamma_\nu})'<p<\f{1}{\gamma_\nu}$ and $w\in A_{p(1-\gamma_\nu)}(\mathbb R_+)\cap RH_{(\f{1}{p\gamma_\nu})'}(\mathbb R_+)$ with the convention $\f{1}{0}=\vc$.
\end{thm}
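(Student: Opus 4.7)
The plan is to combine a sharp pointwise bound for the Laguerre heat kernel with the Auscher--Martell weighted extrapolation theorem for maximal operators of semigroups admitting $L^{p_0}$--$L^{q_0}$ off-diagonal estimates. The parameters suggested by the weight class $A_{p(1-\gamma_\nu)}\cap RH_{(1/(p\gamma_\nu))'}$ are $p_0 = 1/(1-\gamma_\nu)$ and $q_0 = 1/\gamma_\nu$, since then $p/p_0 = p(1-\gamma_\nu)$ and $(q_0/p)' = (1/(p\gamma_\nu))'$. When $\gamma_\nu = 0$ one has $p_0 = 1$, $q_0 = \infty$, and the assertion reduces to the classical $A_p$-weighted boundedness of an operator dominated by the Hardy--Littlewood maximal function; the delicate case is $\nu \in (-1, -1/2)$, where $\gamma_\nu = -1/2 - \nu \in (0, 1/2)$.

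The first step is to establish the pointwise kernel bound
\[
p_t(x,y)\lesssim \frac{1}{\sqrt t}\txg\tyg \exp\left(-c\frac{(x-y)^2}{t}\right),\qquad x,y,t>0,
\]
for the kernel $p_t(x,y)$ of $e^{-t\mathcal{L}_\nu}$. The boundary factors $\txg\tyg$ reflect the blow-up $\varphi_k^\nu(x)\sim x^{\nu+1/2}$ as $x\to 0^+$ when $\nu<-1/2$, and can be extracted from the Mehler-type integral representation of the one-dimensional Laguerre heat kernel together with the asymptotics of the modified Bessel function $I_\nu$ near $0$ and $\infty$. When $\nu\ge -1/2$ the extra factor equals $1$ and one recovers the classical Gaussian bound.

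The second step is to upgrade this pointwise bound to $L^{p_0}$--$L^{q_0}$ off-diagonal estimates on the heat semigroup: for any pair of intervals $B_1,B_2\subset\mathbb{R}_+$ of common radius $\sqrt t$ and any $f$ supported in $B_1$,
\[
\Big(\fint_{B_2}|e^{-t\mathcal L_\nu}f|^{q_0}\Big)^{1/q_0}\lesssim \exp\left(-c\frac{d(B_1,B_2)^2}{t}\right)\Big(\fint_{B_1}|f|^{p_0}\Big)^{1/p_0}.
\]
The key integrability computation is that $\int_0^{\sqrt t}(\sqrt t/x)^{q_0\gamma_\nu}\,dx<\infty$ precisely when $q_0\gamma_\nu<1$, i.e.\ $q_0<1/\gamma_\nu$; at the endpoint $q_0=1/\gamma_\nu$ one works either with a weak-type formulation or by first proving the estimate with $q_0-\varepsilon$ and interpolating, which is enough to reach the open range $p<1/\gamma_\nu$ in the statement.

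Finally, I would invoke the Auscher--Martell weighted maximal theorem for semigroups: if $e^{-t\mathcal L_\nu}$ satisfies off-diagonal bounds of type $(p_0,q_0)$, then the maximal operator $\mathcal M_{\mathcal L_\nu}$ is bounded on $L^p_w$ for all $p_0<p<q_0$ and $w\in A_{p/p_0}\cap RH_{(q_0/p)'}$, which is exactly the content of Theorem \ref{main thm 0}. The main obstacle is Step 1, obtaining the pointwise kernel bound with the sharp exponent $\gamma_\nu$ in the boundary factor; once this is in place, the passage to off-diagonal estimates is routine integration and the Auscher--Martell theorem is applied as a black box.
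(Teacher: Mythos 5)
Your Steps 1 and 2 are sound and in fact coincide with ingredients of the paper: the pointwise bound you postulate is exactly Proposition \ref{prop-heat kernel}(iii), and the $L^{p}$--$L^{q}$ off-diagonal estimates with the boundary factors are Lemma \ref{lem Lpq}; your endpoint worry about $q_0=1/\gamma_\nu$ is harmless, since by the self-improvement properties in Lemma \ref{weightedlemma1} any admissible $(p,w)$ already lies in an interior range $p_0'<p<q_0'$ with $w\in A_{p/p_0'}\cap RH_{(q_0'/p)'}$. The genuine gap is in Step 3. There is no black-box ``Auscher--Martell maximal theorem'' that converts off-diagonal estimates for $e^{-t\mathcal L_\nu}$ directly into weighted bounds for $\mathcal M_{\mathcal L_\nu}$: the criteria of this type (Theorems \ref{BZ-thm} and \ref{Martell-thm}, i.e.\ Bernicot--Zhao and Auscher--Martell Part I, Theorem 3.7) all presuppose that the sublinear operator $T$ is \emph{already} bounded, unweighted, on $L^{q_0}$ resp.\ $L^{p_0}$, and for $T=\mathcal M_{\mathcal L_\nu}$ with $-1<\nu<-1/2$ this a priori boundedness is precisely the heart of the matter. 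It is not free from general semigroup theory (the semigroup is not $L^\infty$-contractive, so Stein's maximal theorem for symmetric diffusion semigroups does not apply, and even $L^2$-boundedness of the maximal function needs an argument), and it cannot be obtained from your Step 1 bound by a crude domination: no estimate $\mathcal M_{\mathcal L_\nu}f\lesssim \mathcal M_r f$ can hold, since for $f=\chi_{[1,2]}$ one has $\mathcal M_{\mathcal L_\nu}f(x)\gtrsim x^{-\gamma_\nu}$ as $x\to 0^+$ while $\mathcal M_r f$ stays bounded. Taking the supremum in $t$ through the factors $(1+\sqrt t/x)^{\gamma_\nu}(1+\sqrt t/y)^{\gamma_\nu}$ is exactly the delicate point, and invoking the AM machinery here is circular. (A further, more technical point: even granting an unweighted starting exponent, verifying hypotheses like \eqref{e2-Martell} for a maximal operator requires pulling $\sup_t$ inside local $L^{q_0}$ averages, which needs kernel bounds for $\mathcal A_{r_B}$ and a commutation argument, not just operator-norm off-diagonal decay.)

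The paper closes this gap by a different device, which your proposal is missing: Lemma \ref{lem 1} gives a bound on $p_t^\nu(x,y)$ that is \emph{uniform in $t$}, namely a Gaussian term plus the two one-sided Hardy-type kernels $\frac{1}{x}(x/y)^{\gamma_\nu}\chi_{\{y<x/2\}}$ and $\frac{1}{y}(y/x)^{\gamma_\nu}\chi_{\{y\ge x/2\}}$. Then $\mathcal M_{\mathcal L_\nu}f\lesssim \mathcal M f+T_2f+T_3f$ pointwise, $T_2f\lesssim \mathcal M_r f$ by H\"older with $r>\frac{1}{1-\gamma_\nu}$ chosen (via openness of the weight classes) so that $\gamma_\nu r'<1$ and $w\in A_{p/r}$, and $T_3$ is handled by duality, using Lemma \ref{weightedlemma1}(vii) to get $w^{1-p'}\in A_{p'(1-\gamma_\nu)}$ and hence $w^{1-p'}\in A_{p'/r}$; this yields the weighted bound directly, with no recourse to the off-diagonal machinery at all. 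If you want to salvage your route, you must first prove the unweighted $L^{p}$ boundedness of $\mathcal M_{\mathcal L_\nu}$ on the open range $\bigl(\frac{1}{1-\gamma_\nu},\frac{1}{\gamma_\nu}\bigr)$ by some such $t$-uniform domination argument, at which point the pointwise argument already gives the weighted conclusion and the AM step becomes superfluous.
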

In the case $\nu\ge -1/2$, Theorem \ref{main thm 0} states that the maximal function $\mathcal M_{\mathcal L_\nu}$ is bounded on $L^p_w(\mathbb R_+)$ for all $1<p<\vc$ and $w\in A_p$. For  $-1<\nu<-1/2$ and $w(x)=x^\sigma$, Theorem \ref{main thm 0} asserts that the maximal function $\mathcal{M}_{\mathcal{L}\nu}$ is bounded on $L^p_{x^\sigma}(\mathbb{R}_+)$ whenever $1<p<\infty$ and $-1-p(\nu+1/2)<\sigma<p(\nu+3/2)-1$, which recovers the result in \cite[Theorem 4.1]{Betancor2}. We are currently uncertain whether the estimate in Theorem \ref{main thm 0} holds true in the higher-dimensional case of $\mathbb{R}^n_+$, where $n\geq 2$.

\bigskip

We next concern the Riesz transform $R_\nu =(R_\nu^1,\ldots, R_\nu^n)$ defined by
\[
R_\nu^j = \delta_j \mathcal L_\nu^{-1/2}, \ \ j=1,\ldots, n.
\]
It is established in \cite{NS} that the Riesz transform is bounded on $L^2(\mathbb{R}^n_+)$. In \cite[Theorem 3.3]{NS}, it was proved that if $\nu_i\geq -\frac{1}{2}$, $\nu_i \notin \left(-\frac{1}{2}, \frac{1}{2}\right)$ for $i = 1, \ldots, n$, then the Riesz transform $R_\nu$ is a Calderón--Zygmund operator, and hence, it is bounded on $L^p_w(\mathbb{R}^n_+)$ for $1<p<\infty$ and $w\in A_p(\mathbb{R}^n_+)$. This result has been recently improved by the author in \cite{B}, which demonstrates that the Riesz transform $R_\nu$ is a Calderón--Zygmund operator if $\nu\in [-1/2,\infty)^n$. The case $\nu\in (-1,\infty)^n\backslash [-1/2,\infty)^n$ remains open and is only known for $n=1$ with power weights estimates, as shown in \cite{Betancor, Betancor2}. The following result not only fills the gap when $\nu \in (-1,\infty)^n\backslash [-1/2,\infty)^n$ but also extends the weighted estimates to Muckenhoupt weights. More precisely, denote  $\delta=(\delta_1,\ldots, \delta_n)$ and $\delta^*=(\delta^*_1,\ldots, \delta^*_n)$. Then, we have:

\begin{thm}
	\label{main thm 1}
	Let $\nu\in (-1,\vc)^n$ and $j=1,\ldots, n$. Then for $\f{1}{1-\gamma_\nu}<p< \f{1}{\gamma_{\nu+e_j}}$ with the convention $\f{1}{0}=\vc$, the Riesz transform $R^j_\nu:=\delta _j\mathcal L_\nu^{-1/2}$ is bounded on $L^p_w(\mathbb R^n)$ with $w\in A_{(1-\gamma_\nu)p}\cap RH_{(\f{1}{p\gamma_{\nu+e_j}})'}$.
\end{thm}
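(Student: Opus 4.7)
The plan is to reduce the weighted boundedness of $R^j_\nu$ to off-diagonal $L^{p_0}$--$L^{p_1}$ estimates and then invoke a weighted extrapolation theorem of Auscher--Martell type. Setting $p_0 = \frac{1}{1-\gamma_\nu}$ and $p_1 = \frac{1}{\gamma_{\nu+e_j}}$ (with the convention $1/0 = \infty$), one checks that $p/p_0 = (1-\gamma_\nu)p$ and $(p_1/p)' = \bigl(1/(p\gamma_{\nu+e_j})\bigr)'$, so that the weight class $A_{(1-\gamma_\nu)p} \cap RH_{(1/(p\gamma_{\nu+e_j}))'}$ is exactly the class that naturally appears in such an extrapolation scheme.

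First I would use the subordination formula
\[
R^j_\nu f = \delta_j \mathcal{L}_\nu^{-1/2} f = \frac{1}{\sqrt{\pi}} \int_0^\infty \delta_j e^{-t\mathcal{L}_\nu} f \, \frac{dt}{\sqrt{t}}
\]
to reduce matters to estimates on $\delta_j e^{-t\mathcal{L}_\nu}$. The identity (\ref{eq- delta and eigenvector}) exhibits $\delta_j$ as an intertwining operator from the $\nu$-setting to the $(\nu+e_j)$-setting; combined with the Mehler-type formula for the Laguerre heat kernel, this should yield pointwise upper bounds for the kernel of $\delta_j e^{-t\mathcal{L}_\nu}$ of Gaussian type, corrected by boundary factors of the form $\bigl(1+\tfrac{\sqrt{t}}{x}\bigr)^{\gamma_\nu}\bigl(1+\tfrac{\sqrt{t}}{y}\bigr)^{\gamma_{\nu+e_j}}$ that absorb the singularity of the Laguerre functions near $\{x_j=0\}$ when $\nu_j\in(-1,-1/2)$.

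Integrating these kernel bounds against $t^{-1/2}\,dt$ and applying standard Gaffney-type techniques then gives off-diagonal $L^{p_0}$--$L^{p_1}$ estimates for $R^j_\nu$. Concretely, for every ball $B\subset\mathbb{R}^n_+$ of radius $r_B$, every sufficiently large integer $M$, and every $k\ge 1$, one would establish
\[
\NormA{R^j_\nu (I - e^{-r_B^2 \mathcal{L}_\nu})^M f}_{L^{p_1}(S_k(B))} \le C\, 2^{-kN}\, |2^k B|^{\frac{1}{p_1}-\frac{1}{p_0}}\, \NormA{f}_{L^{p_0}(B)},
\]
together with the analogous estimate for $R^j_\nu e^{-r_B^2 \mathcal{L}_\nu}(I - e^{-r_B^2 \mathcal{L}_\nu})^M f$, where $S_k(B) = 2^{k+1}B\setminus 2^k B$. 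The precise endpoints $p_0$ and $p_1$ emerge from requiring the boundary weights $x^{-\gamma_\nu}$ and $y^{-\gamma_{\nu+e_j}}$ to be locally $L^{p_0}$- and $L^{p_1}$-integrable, respectively.

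With these estimates in hand, the weighted bound follows from the Auscher--Martell extrapolation theorem (in the version developed in the preliminary sections of the paper), which upgrades off-diagonal $L^{p_0}$--$L^{p_1}$ estimates for an $L^2$-bounded operator to $L^p_w$-boundedness for all $p\in(p_0,p_1)$ and $w\in A_{p/p_0}\cap RH_{(p_1/p)'}$. The main obstacle throughout is the sharp kernel estimate for $\delta_j e^{-t\mathcal{L}_\nu}$ with the correct $\gamma_\nu$- and $\gamma_{\nu+e_j}$-dependence: applying $\delta_j$ to the Mehler kernel produces additional $1/x_j$ factors and derivatives of modified Bessel functions, and one must track the cancellations carefully to preserve the sharp thresholds $\gamma_\nu$ and $\gamma_{\nu+e_j}$ rather than the crude $1/2$ that a naive estimate would give.
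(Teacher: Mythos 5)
Your overall framework (heat-kernel bounds with boundary correction factors, off-diagonal estimates for $R^j_\nu(I-e^{-r_B^2\mathcal L_\nu})^M$, then a weighted Auscher--Martell/Bernicot--Zhao criterion with $p_0=\f{1}{1-\gamma_\nu}$, $q_0=\f{1}{\gamma_{\nu+e_j}}$) is the same circle of ideas the paper uses, but your key kernel estimate is stated with the roles of the two variables interchanged, and this is not a harmless typo. Writing $K_t(x,y)$ for the kernel of $\delta_j e^{-t\mathcal L_\nu}$ (so $\delta_j$ acts in the output variable $x$), the correct bound is that the singularity is \emph{removed} in the $x_j$-slot and \emph{retained} in $y$: schematically $|K_t(x,y)|\lesi t^{-(n+1)/2}e^{-|x-y|^2/ct}\ContainB{1+\sqrt t/x}^{\gamma_{\nu+e_j}}\ContainB{1+\sqrt t/y}^{\gamma_{\nu}}$, because by \eqref{eq- delta and eigenvector} the factor $x_j^{\nu_j+1/2}$ becomes $x_j^{\nu_j+3/2}$ after applying $\delta_j$, while the $y_j^{\nu_j+1/2}$ behaviour is untouched (this is Proposition \ref{prop- delta k pt} in dimension one, tensored with Proposition \ref{prop-heat kernel}). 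Your bound $\ContainB{1+\sqrt t/x}^{\gamma_\nu}\ContainB{1+\sqrt t/y}^{\gamma_{\nu+e_j}}$ is false as stated: for $\nu_j\in(-1,-1/2)$ and fixed $x$, the kernel blows up like $y_j^{\nu_j+1/2}$ as $y_j\to 0$, which is not controlled by an exponent $\gamma_{\nu+e_j}$ (equal to $0$ when $n=1$). Worse, if you feed your version into the $L^p\to L^q$ mechanism of Lemma \ref{lem Lpq} (the $y$-exponent limits $p$ from below, the $x$-exponent limits $q$ from above), you obtain the range $\f{1}{1-\gamma_{\nu+e_j}}<p<\f{1}{\gamma_\nu}$ and the weight class of the \emph{adjoint} $(R^j_\nu)^*$, not the range and class claimed in the theorem; your accompanying "local integrability of $x^{-\gamma_\nu}$, $y^{-\gamma_{\nu+e_j}}$" heuristic is likewise miswired (the $y$-factor must be $L^{p_0'}$-integrable, the $x$-factor $L^{q_0}$-integrable).

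The second gap is the extrapolation step. Neither criterion available here upgrades "$L^2$-bounded $+$ off-diagonal $L^{p_0}$--$L^{q_0}$ estimates" to the full weighted range in one shot: Theorem \ref{BZ-thm} needs unweighted $L^{q_0}$-boundedness at the \emph{upper} exponent, and Theorem \ref{Martell-thm} needs unweighted $L^{p_0}$-boundedness at the \emph{lower} exponent together with a hypothesis phrased in terms of $\mathcal M_{p_0}(|Tf|)$. Since $2$ lies strictly inside $(\f{1}{1-\gamma_\nu},\f{1}{\gamma_{\nu+e_j}})$, some bootstrapping is unavoidable, and your proposal does not say how the unweighted boundedness above $L^2$ is obtained. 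The paper's route is: first $\f{1}{1-\gamma_\nu}<p\le 2$ via Theorem \ref{BZ-thm} with $q_0=2$; then the upper range and the weighted bounds by duality, using the operator identity coming from \eqref{eq- delta and eigenvector}, namely $e^{-t\mathcal L_\nu}\mathcal L_\nu^{-1/2}\delta_j^{*}(\cdots)=\delta_j^{*}e^{-t(\mathcal L_{\nu+e_j}+2)}(\mathcal L_{\nu+e_j}+2)^{-1/2}(\cdots)$, so that the required off-diagonal estimates for $(R^j_\nu)^*$ follow from the $\delta^*$-bounds on the shifted semigroup (Proposition \ref{prop-delta dual heat kernel} and \eqref{eq-partial k heat kernel 3 higher dimension}) and Lemma \ref{lem Lpq}. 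You invoke the intertwining only as motivation for a pointwise bound; in the actual proof it is used as an exact semigroup identity, and that (or an equivalent commutation argument à la the square-function proof) is the missing ingredient your sketch needs to reach $p>2$ and the stated weight class.
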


Note that Theorem \ref{main thm 1} is new, even for the unweighted estimate when $n\geq 2$. In the specific case when $n=1$, Theorem \ref{main thm 1} reads that the Riesz transforms $\delta \mathcal L_\nu^{-1/2}$ is bounded on $L^p_w(\mathbb R_+)$ for $\f{1}{1-\gamma_\nu}<p<\vc$ and $w\in A_{p(1-\gamma_\nu)}$, where $\gamma_\nu =\max\{-\nu-1/2, 0\}$. This marks the first instance where weighted norm inequalities for the Riesz transform have been investigated with Muckenhoupt weights. 

Finally, as a by product, we consider the following square functions
	\begin{equation}
	\label{eq-square function}
	S^j_{\mathcal L_\nu}f = \Big(\int_0^\vc |\sqrt t \delta_j e^{-t\mathcal L_\nu}f|^2\f{dt}{t} \Big)^{1/2}, \ \ j=1,\ldots,n,
\end{equation}
and
	\begin{equation}
	\label{eq-square function tLetL}
	G_{\mathcal L_\nu}f = \Big(\int_0^\vc |t\mathcal L_\nu  e^{-t\mathcal L_\nu}f|^2\f{dt}{t} \Big)^{1/2}.
\end{equation}
Then we also obtained the boundedness of these square functions on the weighted $L^p(\mathbb R^n_+)$.
\begin{thm}
	\label{main thm 2}
	Let $\nu\in (-1,\vc)^n$ and $j=1,\ldots, n$. Then we have
	\begin{enumerate}
		\item[(a)] the square functions $G_{\mathcal L_\nu}$ is bounded on $L^p_w(\mathbb R^n)$ for $\f{1}{1-\gamma_\nu}<p<\f{1}{\gamma_{\nu}}$ and $w\in A_{(1-\gamma_\nu)p}\cap RH_{(\f{1}{p\gamma_{\nu}})'}$  with the convention $\f{1}{0}=\infty$.
		
		\item[(b)] the square functions $S^j_{\mathcal L_\nu}$ is bounded on $L^p_w(\mathbb R^n)$ for $\f{1}{1-\gamma_\nu}<p<\f{1}{\gamma_{\nu+e_j}}$ and $w\in A_{(1-\gamma_\nu)p}\cap RH_{(\f{1}{p\gamma_{\nu+e_j}})'}$ with the convention $\f{1}{0}=\infty$.
	\end{enumerate}
	
\end{thm}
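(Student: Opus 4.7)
The plan is to reduce Theorem \ref{main thm 2} to a weighted boundedness criterion of Auscher--Martell type, in which the Laguerre heat semigroup $\{e^{-t\mathcal L_\nu}\}_{t>0}$ plays the role of the approximation of the identity. The intersection class $A_{(1-\gamma_\nu)p}\cap RH_{(1/(p\gamma_\nu))'}$ (and its analogue with $\gamma_{\nu+e_j}$) is precisely what that scheme outputs once one has off-diagonal $L^{p_0}\to L^{q_0}$ estimates with $p_0=\frac{1}{1-\gamma_\nu}$ and $q_0=\frac{1}{\gamma_\nu}$ (respectively $q_0=\frac{1}{\gamma_{\nu+e_j}}$); the lower and upper endpoints of the $p$-range in the statement are exactly $p_0$ and $q_0$.

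First I would verify the unweighted $L^2$ bound. For $G_{\mathcal L_\nu}$, the spectral decomposition and Fubini yield
\[
\|G_{\mathcal L_\nu}f\|_{L^2}^2 = \sum_{k\in\mathbb N^n}\Big(\int_0^\infty (t\lambda_k)^2 e^{-2t\lambda_k}\,\frac{dt}{t}\Big)|\langle f,\varphi_k^\nu\rangle|^2 \simeq \|f\|_{L^2}^2,
\]
with $\lambda_k=4|k|+2|\nu|+2n$, the scalar integral equalling $\tfrac14$ after the change of variable $u=2t\lambda_k$. For $S^j_{\mathcal L_\nu}$ I would combine Fubini with the identity $\delta_j\varphi_k^\nu=-2\sqrt{k_j}\,\varphi_{k-e_j}^{\nu+e_j}$ from \eqref{eq- delta and eigenvector} and the orthonormality of $\{\varphi_{k-e_j}^{\nu+e_j}\}_k$ to reduce $\|S^j_{\mathcal L_\nu}f\|_{L^2}^2$ to $2\sum_k (k_j/\lambda_k)|\langle f,\varphi_k^\nu\rangle|^2$, which is $\lesssim \|f\|_{L^2}^2$ since $k_j/\lambda_k\le 1/4$.

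The core step is the kernel analysis. Writing $p_t^\nu(x,y)$ for the integral kernel of $e^{-t\mathcal L_\nu}$, I would use the Mehler-type formulas from \cite{NS} to establish Gaussian upper bounds of the schematic form
\[
|p_t^\nu(x,y)|\lesssim t^{-n/2}\prod_{i=1}^n\Big(1+\frac{\sqrt t}{x_i}\Big)^{\gamma_{\nu_i}}\Big(1+\frac{\sqrt t}{y_i}\Big)^{\gamma_{\nu_i}} e^{-c|x-y|^2/t},
\]
with $\gamma_{\nu_i}:=\max(-1/2-\nu_i,0)$, together with the analogous bounds for $t\partial_t p_t^\nu(x,y)$ (obtained by differentiating Mehler in $t$) and for $\sqrt t\,\delta_{j,x} p_t^\nu(x,y)$, where in the latter the output parameter on the $x$-side is $\nu+e_j$ and the relevant boundary exponent becomes $\gamma_{\nu+e_j}$. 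Integrating these factors on annuli via H\"older's inequality then gives off-diagonal bounds of the form
\[
\big\|\mathbf 1_E\, e^{-t\mathcal L_\nu}(\mathbf 1_F f)\big\|_{L^{q_0}} \lesssim t^{-\frac{n}{2}(\frac{1}{p_0}-\frac{1}{q_0})}\,e^{-c\,d(E,F)^2/t}\,\|\mathbf 1_F f\|_{L^{p_0}}
\]
with $(p_0,q_0)=\bigl(\tfrac{1}{1-\gamma_\nu},\tfrac{1}{\gamma_\nu}\bigr)$, and analogously for $\sqrt t\,\delta_j e^{-t\mathcal L_\nu}$ with $q_0$ replaced by $\tfrac{1}{\gamma_{\nu+e_j}}$. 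The bound $\gamma_\nu<1/2$ built into the definition is exactly what keeps the integrands $(1+\sqrt t/x_i)^{p_0\gamma_{\nu_i}}$ and $(1+\sqrt t/y_i)^{q_0'\gamma_{\nu_i}}$ locally integrable near the boundary.

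The last step is to invoke the vector-valued Auscher--Martell extrapolation for square functions, viewing $G_{\mathcal L_\nu}$ and $S^j_{\mathcal L_\nu}$ as $L^2(\tfrac{dt}{t})$-valued singular integrals. The $L^2$ bound from Step~1, combined with the off-diagonal estimates of Step~2 applied to $I-e^{-r_B^2\mathcal L_\nu}$ on one side and $e^{-r_B^2\mathcal L_\nu}$ on the other side of a Calder\'on--Zygmund-type decomposition, then yields the weighted $L^p_w$ bound on precisely the stated intersection class. The main technical obstacle is Step~2: extracting sharp pointwise kernel bounds carrying the $\gamma_\nu$-type boundary losses uniformly in $t>0$ when some $\nu_i\in(-1,-1/2)$, since in that regime $\varphi^{\nu_i}_{k_i}(x_i)$ degenerates like $x_i^{\nu_i+1/2}$ as $x_i\to 0$ and this behaviour must be tracked through the Mehler representation, through $t\partial_t p_t^\nu$, and after applying $\delta_j$, which by \eqref{eq- delta and eigenvector} shifts $\nu_j$ to $\nu_j+1$ and thereby replaces $\gamma_\nu$ by $\gamma_{\nu+e_j}$ in the upper exponent.
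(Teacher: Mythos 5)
Your plan is essentially the paper's proof: an $L^2$ bound from spectral theory, heat-kernel estimates carrying the boundary factors $(1+\sqrt t/x_i)^{\gamma_{\nu_i}}$ (with the shift $\nu\mapsto\nu+e_j$ under $\delta_j$), the resulting $L^{p_0}$--$L^{q_0}$ off-diagonal estimates, and the Auscher--Martell/Bernicot--Zhao criteria with heat-semigroup approximations of the identity. The only deviations are routine: the paper takes $\mathcal A_{r_B}=I-(I-e^{-r_B^2\mathcal L_\nu})^m$ with $2m-1>n$ (a single factor $e^{-r_B^2\mathcal L_\nu}$ does not give summable decay), works with exponents strictly inside $\bigl(\f{1}{1-\gamma_\nu},\f{1}{\gamma_{\nu+e_j}}\bigr)$ since the off-diagonal bounds fail at the endpoints, and for $p>2$ it verifies the $T\mathcal A_{r_B}$ condition through the intertwining $\delta_j e^{-t\mathcal L_\nu}=e^{-t(\mathcal L_{\nu+e_j}+\alpha_j)}\delta_j$, which produces the bound by $\mathcal M_{p_0}\bigl(|S^j_{\mathcal L_\nu}f|\bigr)$ that the quoted criterion requires.
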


Finally, needless to say that our approach also implies the boundedness of the area integral considered in \cite{Betancor3}. However, due to the length of the paper, we will not present the results here and would like to leave it for interested readers.

\bigskip

The paper is organized as follows. In Section 2, we recall the definitions of Muckenhoupt weights and two theorems regarding the boundedness criteria of singular integrals beyond the Calderón-Zygmund theory. Section 3 is dedicated to proving kernel estimates related to the heat semigroup $e^{-t\mathcal{L}_\nu}$. Section 4 is devoted to the proofs of the main results.

\bigskip
 
 Throughout the paper, we always use $C$ and $c$ to denote positive constants that are independent of the main parameters involved but whose values may differ from line to line. We will write $A\lesi B$ if there is a universal constant $C$ so that $A\leq CB$ and $A\sim B$ if $A\lesi B$ and $B\lesi A$. For $a \in \mathbb{R}$, we denote the integer part of $a$ by $\lfloor a\rfloor$.  For a given ball $B$, unless specified otherwise, we shall use $x_B$ to denote the center and $r_B$ for the radius of the ball.
 
In the whole paper, we will often use the following inequality without any explanation $e^{-x}\le c(\alpha) x^{-\alpha}$ for any $\alpha>0$ and $x>0$.
\section{Preliminaries}
\subsection{Muckenhoupt weights and the Hardy-Littlewood maximal function}
We start with some notations which will be used frequently. For a measurable subset $E\subset \mathbb R^n_+$ and a measurable function $f$ we denote
$$
\fint_E f(x)dx=\f{1}{|E|}\int_E f(x)dx.
$$
Given a ball $B$, we denote $S_j(B)=2^{j}B\backslash 2^{j-1}B$ for $j=1, 2, 3, \ldots$, and we set $S_0(B)=B$.

Let $1\leq q<\infty$. A nonnegative locally integrable function $w$ belongs to the  Muckenhoupt class  $A_q$, say $w\in A_q$, if there exists a positive constant $C$ so that
$$\Big(\fint_B w(x)dx\Big)\Big(\fint_B w^{-1/(q-1)}(x)dx\Big)^{q-1}\leq C, \quad\mbox{if}\; 1<q<\infty$$
and
$$
\fint_B w(x)dx\leq C \mathop{\mbox{ess-inf}}\limits_{x\in B}w(x),\quad{\rm if}\; q=1,
$$
for all balls $B$ in $\mathbb R^d$. We define $A_\infty =\bigcup_{1\le p<\vc} A_p$.

%We say that $w\in A_\infty$ if $w\in A_q$ for any $q\in [1,\infty)$. We shall denote $w(E) :=\int_E w(x)dx$ for any measurable set $E \subset \mathbb R^n_+$.

The reverse H\"older classes are defined in the following way: $w\in RH_r, 1 < r < \infty$, if there is a constant $C$ such that for
any ball $B \subset \mathbb R^n_+$,
$$
\Big(\fint_B w^r (x) dx\Big)^{1/r} \leq C \fint_B w(x)dx.
$$
The endpoint $r = \infty$ is given by the condition: $w \in RH_\infty$ whenever, there is a constant $C$ such that for any ball
$B \subset \mathbb R^n_+$,
$$
w(x)\leq C \fint_B w(y)dy  \ \text{for a.e. $x\in B$}.
$$

For $w \in A_\vc$ and $0< p <\infty$, the weighted space $L^p_w(\mathbb R^n_+)$ is defined as  the space of $w(x)dx$-measurable functions $f$ such that
$$\|f\|_{L^p_w(\mathbb R^n_+)}:=\Big(\int_{\mathbb R^n_+} |f(x)|^p w(x)dx\Big)^{1/p}<\infty.$$

It is well-known that the power weight $w(x)=x^\alpha \in A_p$ if and only if $-n<\alpha<n(p-1)$. Moreover, $w(x)=x^\alpha \in RH_q$ if and only if $\alpha q>-n$.

We sum up some of the properties of Muckenhoupt classes and reverse H\"older classes in the following
results. See \cite{Du, JN}.
\begin{lem}\label{weightedlemma1}
	The following properties hold:
	\begin{enumerate}[{\rm (i)}]
		\item $w\in A_p, 1<p<\vc$ if and only if $w^{1-p'}\in A_{p'}$.
		\item $A_1\subset A_p\subset A_q$ for $1\leq p\leq q\leq \infty$.
		\item $ RH_q \su RH_p$ for $1< p\leq q< \infty$.
		\item If $w \in A_p, 1 < p < \vc$, then there exists $1 < q < p$ such that $w \in A_q$.
		\item If $w \in RH_q, 1 < q < \vc$, then there exists $q < p < \infty$ such that $w \in RH_p$.
		\item $A_\vc =\cup_{1\leq p<\vc}A_p = \cup_{1< p< \vc}RH_p$.
		\item Let $1<p_0 < p < q_0<\vc$. Then we have
		$$
		w\in A_{\f{p}{p_0}}\cap RH_{(\f{q_0}{p})'}\Longleftrightarrow
		w^{1-p'}\in A_{\f{p'}{q'_0}}\cap RH_{(\f{p'_0}{p'})'}.
		$$
	\end{enumerate}
\end{lem}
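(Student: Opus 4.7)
The plan is to handle each item separately, dispatching the elementary ones first and citing classical theorems for the deeper self-improvement facts. For (i)--(iii), I would argue directly from the definitions. For (i), the key algebraic identity is $(p-1)(p'-1)=1$; substituting $\sigma = w^{1-p'}$ one checks $\sigma^{-1/(p'-1)} = w$, and unfolding the $A_{p'}$ condition on $\sigma$ reproduces exactly the $A_p$ condition on $w$. For (ii), Jensen's inequality applied to the convex map $t \mapsto t^{(p-1)/(q-1)}$ (with $q \ge p$) gives
\[
\Bigl(\fint_B w^{-1/(q-1)}\Bigr)^{q-1} \le \Bigl(\fint_B w^{-1/(p-1)}\Bigr)^{p-1},
\]
yielding $A_p \subset A_q$. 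For (iii), Jensen applied to $t \mapsto t^{q/p}$ gives $RH_q \subset RH_p$ in a single line; the endpoints $A_1$ and $RH_\infty$ are immediate from their one-line definitions.

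Properties (iv) and (v) are the classical self-improvement (openness) theorems, which I would quote from \cite{Du, JN}. Specifically, (iv) is Muckenhoupt's theorem that $A_p$ is an open class, proved via the reverse H\"older inequality for $A_p$ weights obtained through a Calder\'on--Zygmund decomposition; (v) is Gehring's lemma for $RH_q$, proved via a good-$\lambda$ argument together with a stopping-time decomposition at the correct level. Both are standard and can be cited verbatim. Property (vi) is then automatic: $A_\infty = \bigcup_p A_p$ is a definition, and the equality with $\bigcup_{p>1} RH_p$ follows from the reverse H\"older inequality built into the proof of (iv) together with the monotonicity (iii).

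For (vii), the plan is to combine (i) with the analogous duality for reverse H\"older classes under the substitution $w \mapsto w^{1-p'}$. One convenient route is via the intermediate characterization $w \in A_p \cap RH_r \Leftrightarrow w^r \in A_{r(p-1)+1}$ (valid for $1 < p,r < \infty$), which one verifies by unfolding both conditions and applying H\"older's inequality. Applied to both sides of (vii), this reduces the equivalence to a single algebraic identity among the H\"older-conjugate exponents $p/p_0$, $(q_0/p)'$, $p'/q_0'$, $(p_0'/p')'$. I expect (vii) to be the most tedious step because of the exponent bookkeeping, whereas (iv) and (v) are theoretically deeper but are simply quoted from the literature.
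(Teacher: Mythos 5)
The paper does not prove this lemma at all: it is stated as a summary of standard properties of Muckenhoupt and reverse H\"older classes with a bare citation to \cite{Du, JN}, so there is no in-paper argument for your proposal to diverge from. Your sketch is consistent with that: items (i)--(iii) are indeed one-line consequences of the definitions, (iv)--(v) are the classical openness/Gehring theorems that the paper (and you) quote from the literature, and your route to (vii) through the Johnson--Neugebauer characterization $w\in A_p\cap RH_r \Leftrightarrow w^r\in A_{r(p-1)+1}$ combined with the duality (i) is exactly the standard proof (it is how this equivalence is established in Auscher--Martell and in \cite{JN}), and the exponent bookkeeping does close correctly.

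Two small points to tidy up, neither a genuine gap. In (ii) the map $t\mapsto t^{(p-1)/(q-1)}$ has exponent at most $1$ when $q\ge p$, so it is \emph{concave}, and the displayed inequality is Jensen's inequality in the concave direction, $\fint \phi(f)\le \phi\bigl(\fint f\bigr)$; the inequality you wrote is the right one, only the word ``convex'' is off. In (vi), the reverse H\"older inequality built into (iv) together with (iii) only gives the inclusion $\bigcup_{1\le p<\infty}A_p\subset \bigcup_{1<p<\infty}RH_p$; the converse inclusion, that every $RH_p$ weight belongs to some $A_q$, is a separate (also classical) fact from \cite{JN} and should be cited or argued as well, rather than deduced from monotonicity of the $RH$ classes.
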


For $r>0$, the Hardy-Littlewood maximal function $\mathcal{M}_r$ is defined by
$$
\mathcal{M}_rf(x)=\sup_{B\ni x}\Big(\f{1}{|B|}\int_B|f(y)|^r\,dy\Big)^{1/r}, \ x\in \mathbb R^n_+,
$$
where the supremum is taken over all balls $B$ containing $x$. When $r=1$, we write $\mathcal{M}$ instead of $\mathcal{M}_1$.

We now record the following results concerning the weak type estimates and the weighted estimates of the maximal functions.
\begin{lem}[\cite{S}]\label{Lem-maximalfunction}
	Let $0< r<\vc$. Then we have for $p>r$ and $w\in A_{p/r}$,
	$$
	\|\mathcal{M}_rf\|_{L^p_w}\lesi \|f\|_{L^p_w}.
	$$
\end{lem}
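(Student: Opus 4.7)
The plan is to reduce the weighted boundedness of $\mathcal{M}_r$ to the classical Muckenhoupt theorem for the standard Hardy--Littlewood maximal function $\mathcal{M}$. First I would record the pointwise identity
$$\mathcal{M}_r f(x) = \bigl(\mathcal{M}(|f|^r)(x)\bigr)^{1/r},$$
which is immediate from the definitions, since the supremum defining $\mathcal{M}_r f(x)$ is literally the $1/r$-th power of the supremum defining $\mathcal{M}(|f|^r)(x)$ (both are over the same family of balls containing $x$).

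With this identity in hand, raising to the $p$-th power and integrating against the weight gives
$$\|\mathcal{M}_r f\|_{L^p_w}^p = \int_{\mathbb R^n_+} \bigl(\mathcal{M}(|f|^r)(x)\bigr)^{p/r} w(x) dx = \|\mathcal{M}(|f|^r)\|_{L^{p/r}_w}^{p/r}.$$
Now I would set $q = p/r$. The hypothesis $p > r$ gives $q > 1$, and by assumption $w \in A_q$. The classical Muckenhoupt weighted inequality for $\mathcal{M}$ on $L^q_w$ then yields
$$\|\mathcal{M}(|f|^r)\|_{L^q_w} \lesi \||f|^r\|_{L^q_w} = \|f\|_{L^p_w}^r,$$
and combining the two displays produces the desired estimate after taking $p$-th roots.

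There is essentially no obstacle here: the lemma is really a notational convenience, reducing the $L^r$-averaged maximal operator at exponent $p$ to the standard maximal operator at exponent $p/r$. The only substantive input is the classical Muckenhoupt $A_q$-boundedness of $\mathcal{M}$ for $q > 1$, which is precisely the result being cited from Stein's book. The fact that we are working on $\mathbb R^n_+$ rather than $\mathbb R^n$ causes no trouble, since one may restrict attention to balls in $\mathbb R^n$ intersected with $\mathbb R^n_+$ (or equivalently extend $f$ by zero) without affecting either the $A_q$ condition or the maximal function estimate.
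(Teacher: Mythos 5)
Your proof is correct and is the standard argument: the paper itself does not prove this lemma but simply cites Stein's book, and the canonical proof there is exactly your reduction via $\mathcal{M}_r f = (\mathcal{M}(|f|^r))^{1/r}$ to the classical Muckenhoupt $A_{p/r}$ theorem for $\mathcal{M}$. Nothing further is needed.
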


We will end this section by some simple estimates whose proofs will be omitted.

\begin{lem}\label{lem-elementary}
	Let $\epsilon>0$. Then we have:
	\begin{enumerate}[{\rm (i)}]
		\item for every $x\in \mathbb R^n_+$ and $f\in L^1_{\rm loc}(\mathbb R^n_+)$,
		$$\displaystyle \sup_{t>0}\int_{\mathbb R^n_+} \f{1}{t^n}\Big(1+\f{|x-y|}{t}\Big)^{n+\epsilon}f(y)dy \le C(\epsilon)\mathcal M f(x);$$
		
		\item  for every $x\in \mathbb R^n_+$,
		$$\displaystyle \sup_{t>0}\int_{\mathbb R^n_+} \f{1}{t^n}\Big(1+\f{|x-y|}{t}\Big)^{n+\epsilon}f(y)dy \le C(\epsilon);$$
		
		\item for every $t>0$,
		\[
		\int_{|x|<t} \Big(\f{t}{|x|}\Big)^{n-\epsilon} dx \lesi t^{-n}.
		\]
	\end{enumerate}
\end{lem}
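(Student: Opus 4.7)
The three inequalities are classical bounds for the ``Poisson-type'' kernel $K_t(x,y)=t^{-n}(1+|x-y|/t)^{-(n+\epsilon)}$; I will read the statements under the standing convention that the exponents $n+\epsilon$ appearing in (i) and (ii) carry the (missing) minus sign (otherwise the kernel fails to be integrable), and similarly that the bound in (iii) should read $\lesssim t^n$. My plan is as follows.

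For (i), the natural tool is a dyadic annular decomposition in $y$ around the point $x$. Set
\[
A_0=B(x,t)\cap\mathbb R^n_+, \qquad A_k=\bigl(B(x,2^k t)\setminus B(x,2^{k-1}t)\bigr)\cap\mathbb R^n_+ \quad (k\ge 1).
\]
On $A_k$ one has the pointwise estimate $(1+|x-y|/t)^{-(n+\epsilon)}\lesssim 2^{-k(n+\epsilon)}$, while $|B(x,2^k t)|\lesssim (2^k t)^n$. Hence
\[
\int_{A_k}K_t(x,y)\,|f(y)|\,dy \;\lesssim\; 2^{-k\epsilon}\fint_{B(x,2^k t)}|f(y)|\,dy \;\le\; 2^{-k\epsilon}\,\mathcal M f(x),
\]
with the $k=0$ term handled analogously (constant $1$ in place of $2^{-k(n+\epsilon)}$). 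Summing the convergent geometric series $\sum_{k\ge 0}2^{-k\epsilon}$ yields the bound $C(\epsilon)\mathcal M f(x)$ uniformly in $t>0$.

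Part (ii) is the specialization of (i) to $f\equiv 1$; equivalently, the change of variable $u=(y-x)/t$ converts the integral into $\int_{\mathbb R^n}(1+|u|)^{-(n+\epsilon)}\,du$, which is finite and depends only on $\epsilon$ (enlarging $\mathbb R^n_+$ to $\mathbb R^n$ is harmless since the integrand is non-negative). Part (iii) is an immediate polar-coordinate computation:
\[
\int_{|x|<t}\Big(\f{t}{|x|}\Big)^{n-\epsilon}dx \;=\; \omega_{n-1}\,t^{n-\epsilon}\int_0^t r^{\epsilon-1}\,dr \;=\; \f{\omega_{n-1}}{\epsilon}\,t^n,
\]
where $\omega_{n-1}$ is the surface measure of the unit sphere in $\mathbb R^n$.

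There is no real obstacle here: every step is elementary. The only point that deserves a moment of care is the intersection with $\mathbb R^n_+$, but in (i) and (iii) this merely shrinks the domain of integration and introduces a factor $\le 1$, so the arguments above go through without modification.
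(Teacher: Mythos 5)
Your proof is correct, and since the paper explicitly omits the proof of this lemma ("some simple estimates whose proofs will be omitted"), your elementary arguments — dyadic annuli around $x$ for (i), the change of variables $u=(y-x)/t$ for (ii), and polar coordinates for (iii) — are precisely the standard ones the author intends. You were also right to read the statement with the corrected signs: as printed, the exponent $n+\epsilon$ should be $-(n+\epsilon)$ (and (ii) should not involve $f$, or equivalently take $f\equiv 1$), and the bound in (iii) should be $\lesssim t^{n}$ rather than $t^{-n}$; these corrected forms are exactly what the paper uses later, e.g.\ the estimate $\int_0^{\sqrt t}\bigl(1+\tfrac{\sqrt t}{z}\bigr)^{2a}\,dz \lesssim \sqrt t$ for $a\in(0,1/2)$ in the proof of Proposition \ref{prop- partial k heat kernel} and the Gaussian-tail bounds in Lemma \ref{lem Lpq}.
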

\subsection{Singular integrals beyond the Calder\'on-Zygmund theory} In this section we recall some criteria for singular integrals beyond the Calder\'on-Zygmund theory to be bounded on the weighted Lebesgue spaces in \cite{AM, BZ}.

 An operator $T$ defined on $L^2(\mathbb R^n_+)$ is said to be a \emph{linearizable operator} if there exists a Banach space $\mathbb{B}$ and a linear operator $U$ from $L^2(\mathbb R^n_+)$ into $L^2(\mathbb R^n_+, \mathbb{B})$ so that
$$
|Tf(x)|=\|Uf(x)\|_{\mathbb{B}}
$$
for all $f\in L^2(\mathbb R^n_+)$ and a.e. $x\in \mathbb R^n_+$.

It can be verified that a linearizable operator is a sublinear operator. The class of linearizable operator includes linear operators, maximal operators and square functions.

We first recall a theorem which is taken from \cite[Theorem 6.6]{BZ} on a criterion for the singular integrals to be bounded on the weighted Lebesgue spaces.
\begin{thm}\label{BZ-thm}
	Let $1\leq p_0< q_0< \vc$ and let $T$ be a linearizable operator. Assume that $T$ can be extended to be bounded on $L^{q_0}$. Assume that there exists a family of operators $\{\mathcal{A}_t\}_{t>0}$ satisfying that for $j\geq 2$ and every ball $B$
	\begin{equation}\label{eq1-BZ}
		\Big(\fint_{S_j(B)}|T(I-\mathcal{A}_{r_B})f|^{q_0}dx\Big)^{1/q_0}\leq
		\alpha(j)\Big(\fint_B |f|^{q_0}dx\Big)^{1/q_0},
	\end{equation}
	and
	\begin{equation}\label{eq1-BZ-bis}
		\Big(\fint_{S_j(B)}|\mathcal{A}_{r_B}f|^{q_0}dx\Big)^{1/q_0}\leq
		\alpha(j)\Big(\fint_B |f|^{p_0}dx\Big)^{1/p_0},
	\end{equation}
	for all $f$ supported in $B$. If $\sum_j \alpha(j)2^{jd}<\vc$, then $T$ is bounded on $L^p_w(\mathbb R^n_+)$ for all $p\in (p_0,q_0)$ and $w\in
	A_{\f{p}{p_0}}\cap RH_{(\f{q_0}{p})'}$.
\end{thm}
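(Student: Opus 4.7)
The plan is to use an Auscher--Martell style sharp maximal function argument: establish an oscillation bound for $Tf$ on balls by $\mathcal M_{p_0}f$, then close via weighted estimates that hold precisely under the combined condition $A_{p/p_0} \cap RH_{(q_0/p)'}$. The two off-diagonal hypotheses on $\mathcal A_{r_B}$ and $T(I-\mathcal A_{r_B})$ are designed so that the former absorbs the $L^{p_0}\to L^{q_0}$ gain and the latter carries the singular behaviour of $T$ outside $B$.

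First, I would fix a ball $B$ and decompose $f = f\mathbf{1}_{4B} + \sum_{j\ge 2} f\mathbf{1}_{S_j(B)}$. Using the operator splitting $T = T\mathcal A_{r_B} + T(I-\mathcal A_{r_B})$, the goal is to control
\[
\Big(\fint_B |Tf - c_B|^{q_0}\Big)^{1/q_0}
\]
for an appropriate ``center'' $c_B := T(I-\mathcal A_{r_B})(f\mathbf{1}_{(4B)^c})$. The choice of $c_B$ is dictated by \eqref{eq1-BZ}: iterating that estimate across annuli shows $c_B$ is essentially constant on $B$ with oscillation $\lesi \sum_{j\ge 2}\alpha(j)(\fint_{2^jB}|f|^{q_0})^{1/q_0}$, so the far tail of $f$ contributes a harmless error once we sum.

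For the local part $f\mathbf{1}_{4B}$: bound $T\mathcal A_{r_B}(f\mathbf{1}_{4B})$ on $B$ using the $L^{q_0}$-boundedness of $T$ together with \eqref{eq1-BZ-bis} on $4B$, yielding $\lesi (\fint_{4B}|f|^{p_0})^{1/p_0}$; and bound $T(I-\mathcal A_{r_B})(f\mathbf{1}_{4B})$ directly by \eqref{eq1-BZ} applied at scale $4B$, yielding $\lesi (\fint_{4B}|f|^{q_0})^{1/q_0}$. Both are $\lesi \mathcal M_{p_0}f(x)$ for $x\in B$ since $p_0\le q_0$. For the remaining tail contribution from $T\mathcal A_{r_B}(f\mathbf{1}_{S_j(B)})$ with $j\ge 2$, apply \eqref{eq1-BZ-bis} after a rescaling that places the support of the argument in the reference ball, producing the factor $\alpha(j)(\fint_{2^{j+1}B}|f|^{p_0})^{1/p_0}$; summing against $\sum_j \alpha(j) 2^{jd}<\vc$ yields the key oscillation bound
\[
\Big(\fint_B |Tf - c_B|^{q_0}\Big)^{1/q_0} \lesi \mathcal M_{p_0}f(x), \qquad x\in B.
\]

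The conclusion then follows from a weighted Fefferman--Stein-type comparison: for $w\in A_{p/p_0}\cap RH_{(q_0/p)'}$, the reverse H\"older condition $RH_{(q_0/p)'}$ upgrades the $L^{q_0}$-averaged oscillation to genuine $L^p_w$ control of $Tf$, while $A_{p/p_0}$ allows Lemma \ref{Lem-maximalfunction} to dominate $\|\mathcal M_{p_0}f\|_{L^p_w}$ by $\|f\|_{L^p_w}$. I expect the main technical obstacle to be the weighted sharp-maximal inequality in this last step: it requires careful quantitative control of the oscillation estimate above and a nontrivial interplay between $L^{q_0}$ averages and reverse H\"older weights, and it is precisely this step that dictates the sharp range $p\in(p_0,q_0)$ and the combined class $A_{p/p_0}\cap RH_{(q_0/p)'}$.
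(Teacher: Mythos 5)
You are attempting a Fefferman--Stein sharp-maximal argument, and the central step of your sketch is not justified by the hypotheses. Both \eqref{eq1-BZ} and \eqref{eq1-BZ-bis} control $T(I-\mathcal{A}_{r_B})f$ and $\mathcal{A}_{r_B}f$ only on the far annuli $S_j(B)$, $j\ge 2$, for $f$ supported in $B$, with the parameter $r_B$ tied to the ball containing the support. Your argument uses them in configurations they do not cover. First, to treat $c_B=T(I-\mathcal{A}_{r_B})(f\mathbf{1}_{(4B)^c})$ you must estimate $T(I-\mathcal{A}_{r_B})g$ \emph{on} $B$ for $g$ supported in a far annulus, with the parameter of the small ball $B$; since no symmetry of $T$ or of $\mathcal{A}_t$ is assumed, \eqref{eq1-BZ} says nothing about this reversed configuration. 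Second, the claim that $T(I-\mathcal{A}_{r_B})(f\mathbf{1}_{4B})$ is controlled on $B$ ``directly by \eqref{eq1-BZ} applied at scale $4B$'' is false: at scale $4B$ that hypothesis only concerns $S_j(4B)$, $j\ge2$, which is disjoint from $B$, and \eqref{eq1-BZ-bis} likewise gives no local ($j=0,1$) information about $\mathcal{A}_{r_B}$. The only on-diagonal tool is the $L^{q_0}$-boundedness of $T$, which produces $\mathcal{M}_{q_0}f$ (or quantities involving $Tf$ itself), not $\mathcal{M}_{p_0}f$. In fact your target inequality $\bigl(\fint_B|Tf-c_B|^{q_0}\,dx\bigr)^{1/q_0}\lesssim \mathcal{M}_{p_0}f(x)$ cannot be a consequence of the stated hypotheses: combined with the Fefferman--Stein inequality it would give boundedness of $T$ on $L^p_w$ for \emph{every} $p>p_0$ and \emph{every} $w\in A_{p/p_0}$, with no restriction $p<q_0$ and no reverse H\"older condition, i.e.\ a strictly stronger statement than Theorem \ref{BZ-thm}; the appearance of $RH_{(q_0/p)'}$ and of the upper limit $q_0$ in the conclusion (and in the applications of this paper, where $q_0=2$) is exactly the signal that the local term can only be handled at the $L^{q_0}$ level. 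The sharp-function mechanism you invoke is the one behind Theorem \ref{Martell-thm} (see \cite{AM}), where $T$ is assumed bounded at the \emph{lower} exponent $p_0$ and the second hypothesis involves $\mathcal{M}_{p_0}(|Tf|)$; it is not adapted to the present ``downward'' statement in which $T$ is only bounded at $q_0$. (A further, minor, point: $T$ is only linearizable, so the algebraic splitting $Tf-c_B=T(f\mathbf{1}_{4B})+T\mathcal{A}_{r_B}(f\mathbf{1}_{(4B)^c})$ must be carried out on the linearization $U$, not on $T$.)

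For the comparison you asked for: the paper does not prove Theorem \ref{BZ-thm} at all; it quotes \cite[Theorem 6.6]{BZ}, remarking only that the argument there, written for $q_0=2$, extends to general $q_0\in(1,\infty)$. The proof in \cite{BZ} is of Calder\'on--Zygmund type rather than sharp-maximal type: one builds (weighted) abstract Hardy spaces whose atoms are of the form $(I-\mathcal{A}_{r_B})b$ with $b$ supported in $B$, uses \eqref{eq1-BZ} to show $T$ maps such atoms into $L^1_w$, uses \eqref{eq1-BZ-bis} in the adapted Calder\'on--Zygmund decomposition to control the regularized part in $L^{q_0}$, and then interpolates with the $L^{q_0}$-boundedness of $T$; it is in this decomposition/interpolation step that the range $(p_0,q_0)$ and the class $A_{p/p_0}\cap RH_{(q_0/p)'}$ genuinely arise. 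If you want a self-contained proof, that route (or the good-$\lambda$ arguments of \cite{AM} for this ``$T$ bounded at the top exponent'' setting) is the one to follow.
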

Note that \cite[Theorem 6.6]{BZ} proves Theorem \ref{BZ-thm} for $q_0=2$, but their arguments also work well for any value $q_0\in (1,\vc)$.

The following theorem is a direct consequence of \cite[Theorem 3.7]{AM} which give a sufficient conditions for a singular integral to be bounded on Lebesgue spaces which plays an important role in the sequel.

\begin{thm}\label{Martell-thm}
	
	Let $1\leq  p_0< q_0\leq \infty.$  Let $T$ be a bounded sublinear
	operator on $L^{p_0}(\mathbb R^n_+)$. Assume that there exists  a family of operators $\{\mathcal{A}_t\}_{t>0}$  satisfying that
	\begin{eqnarray}\label{e1-Martell}
		\Big( \fint_{B} \big| T(I-\mathcal{A}_{r_B})f\big|^{p_0}dx\Big)^{1/p_0} \leq
		C \mathcal{M}_{p_0}(f)(x),
	\end{eqnarray}
	and
	\begin{eqnarray}\label{e2-Martell}
		\Big( \fint_{B} \big| T\mathcal{A}_{r_B}f\big|^{q_0}dx\Big)^{1/q_0} \leq
		C \mathcal{M}_{p_0}(|Tf|)(x),
	\end{eqnarray}
	\noindent for all balls
	$B$ with radius $r_B$, all $f \in C^{\infty}_c(\mathbb R^n_+) $ and all $x\in B$. Then $T$ is bounded on $L^p_w(\mathbb R^n_+)$ for all
	$p_0<p<q_0$ and $w\in A_{p/p_0}\cap RH_{(q_0/p)'}$.
\end{thm}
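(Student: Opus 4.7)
\begin{prf}[Proof plan for Theorem \ref{Martell-thm}]
The plan is as follows. Since the statement is presented as a direct consequence of \cite[Theorem 3.7]{AM}, the cleanest route is to verify that the hypotheses of that theorem are met in the present setting and then invoke it. The argument in \cite[Theorem 3.7]{AM} transfers verbatim from $\mathbb R^n$ to $\mathbb R^n_+$, since it relies only on the doubling property of the Lebesgue measure on Euclidean balls, which holds equally well on $\mathbb R^n_+$.

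For completeness, let me sketch the core strategy. I would introduce the sharp maximal function adapted to the exponent $p_0$,
$$
M^\#_{p_0} g(x) = \sup_{B\ni x}\inf_{c}\Big(\fint_B |g(y)-c|^{p_0}\,dy\Big)^{1/p_0}.
$$
Fixing a ball $B$ with $x\in B$, decompose $Tf = T(I-\mathcal A_{r_B})f + T\mathcal A_{r_B}f$ and take the constant $c$ in the definition of $M^\#_{p_0}$ to be an average linked to $T\mathcal A_{r_B}f$. By Minkowski's inequality, the $L^{p_0}$-oscillation of $Tf$ on $B$ splits into two contributions: assumption \eqref{e1-Martell} controls the first by $C\mathcal M_{p_0}f(x)$, while H\"older's inequality together with \eqref{e2-Martell} bounds the second by $C\mathcal M_{p_0}(|Tf|)(x)$. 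The upshot is a pointwise estimate of the form
$$
(M^\#_{p_0}Tf)(x) \lesi \mathcal M_{p_0} f(x) + \mathcal M_{p_0}(|Tf|)(x).
$$

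The main obstacle is the circular appearance of $|Tf|$ on the right. This is circumvented by a weighted Fefferman--Stein-type inequality
$$
\|g\|_{L^p_w} \lesi \|M^\#_{p_0} g\|_{L^p_w},\qquad w\in A_{p/p_0}\cap RH_{(q_0/p)'},
$$
valid once $g\in L^{p_0}$ (which is supplied here by the a priori $L^{p_0}$-boundedness of $T$, after a suitable truncation of $f$). The $RH_{(q_0/p)'}$ condition is precisely what allows one to bridge the $L^{q_0}$ scale appearing in \eqref{e2-Martell} with the $L^{p_0}$ scale of $M^\#_{p_0}$, while the $A_{p/p_0}$ condition is what makes $\mathcal M_{p_0}$ bounded on $L^p_w$ via Lemma \ref{Lem-maximalfunction}. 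A standard absorption argument moves the $\mathcal M_{p_0}(|Tf|)$ contribution to the left-hand side, yielding $\|Tf\|_{L^p_w}\lesi \|f\|_{L^p_w}$ first for $f\in C^\infty_c(\mathbb R^n_+)$ and then, by density, for all $f\in L^p_w(\mathbb R^n_+)$.
\end{prf}
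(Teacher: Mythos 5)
Your first move --- reducing the statement to \cite[Theorem 3.7]{AM} and observing that its proof uses only the doubling property of Lebesgue measure on balls, hence transfers to $\mathbb R^n_+$ --- is exactly what the paper does: its ``proof'' of Theorem \ref{Martell-thm} is nothing more than that citation. So the citation part of your proposal coincides with the paper.

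The ``for completeness'' sketch, however, is not the argument behind \cite[Theorem 3.7]{AM}, and as written it has a genuine gap at the absorption step. From the pointwise bound $M^{\#}_{p_0}(Tf)(x)\lesssim \mathcal M_{p_0}f(x)+\mathcal M_{p_0}(|Tf|)(x)$ and a Fefferman--Stein inequality you would obtain $\|Tf\|_{L^p_w}\le C\|\mathcal M_{p_0}f\|_{L^p_w}+C\|\mathcal M_{p_0}(|Tf|)\|_{L^p_w}$. But $\mathcal M_{p_0}(|Tf|)\ge |Tf|$ a.e., and by Lemma \ref{Lem-maximalfunction} the last term is \emph{comparable from above and below} to $\|Tf\|_{L^p_w}$ with a constant built from the constant in \eqref{e2-Martell}, the Fefferman--Stein constant and the $A_{p/p_0}$ constant of $w$; there is no mechanism making the coefficient of $\|Tf\|_{L^p_w}$ on the right smaller than $1$. ``Moving it to the left-hand side'' is therefore impossible: what you reach is $\|Tf\|\le C\|f\|+C'\|Tf\|$ with $C'\ge 1$ in general, which is vacuous. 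Hypothesis \eqref{e2-Martell} is not meant to control a sharp function of $Tf$ at the norm level.

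What Auscher and Martell actually do is a two-parameter good-$\lambda$ argument (their Theorem 3.1), applied with $F=|Tf|^{p_0}$, $G_B=|T(I-\mathcal A_{r_B})f|^{p_0}$, $H_B=|T\mathcal A_{r_B}f|^{p_0}$. On each Whitney ball $B$ of $\{\mathcal M F>\lambda\}$, the portion where $\mathcal M F>K\lambda$ while $\mathcal M_{p_0}f$ is small has Lebesgue measure at most $C\bigl(K^{-q_0/p_0}+\gamma\bigr)|B|$: the factor $K^{-q_0/p_0}$ comes from Chebyshev at height $K\lambda$ against the $L^{q_0/p_0}$ average of $H_B$, which \eqref{e2-Martell} bounds by $\mathcal M F$ on $B$ --- this is precisely where the non-small constant is harmless. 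The hypothesis $w\in RH_{(q_0/p)'}$ converts this Lebesgue smallness into weighted smallness with a power, and after summing over the Whitney decomposition and integrating in $\lambda$ one absorbs a term whose coefficient is made $<1$ by taking $K$ large and $\gamma$ small (legitimate thanks to an a priori finiteness/truncation step supplied by the $L^{p_0}$-boundedness of $T$). That smallness in $K$ lives at the level of distribution functions and is lost once you pass to norms, which is why your sharp-function route does not close; a sharp-function argument of Duong--McIntosh/Martell type would instead use $\sup_{B\ni x}\bigl(\fint_B|Tf-\mathcal A_{r_B}Tf|^{p_0}\bigr)^{1/p_0}$, i.e. $\mathcal A_t$ acting on $Tf$, which is not what \eqref{e1-Martell}--\eqref{e2-Martell} provide.
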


\section{Some   kernel estimates}

This section is devoted to establishing some kernel estimates related to the heat kernel of $\mathcal L_\nu$. These estimates play an essential role in proving our main results. We begin by providing an explicit formula for the heat kernel of $\mathcal L_\nu$.

Let $\nu \in (-1,\vc)^n$. For each $j=1,\ldots, n$, similarly to $\mathcal L_\nu$, denote by $\mathcal L_{\nu_j}$ the self-adjoint extension of the differential operator 
\[
L_{\nu_j} :=  -\frac{\partial^2}{\partial x_j^2} + x_j^2 + \frac{1}{x_j^2}(\nu_j^2 - \frac{1}{4})
\] 
on $C_c^\infty(\mathbb{R}_+)$ as the natural domain. It is easy to see that 
\[
\mathcal L_\nu =\sum_{j=1}^n \mathcal L_{\nu_j}.
\]

Let $p_t^\nu(x,y)$ be the kernel of $e^{-t\mathcal L_\nu}$ and let $p_t^{\nu_j}(x_j,y_j)$ be the kernel of $e^{-t\mathcal L_{\nu_j}}$ for each $j=1,\ldots, n$. Then we have
\begin{equation}\label{eq- prod ptnu}
	p_t^\nu(x,y)=\prod_{j=1}^n p_t^{\nu_j}(x_j,y_j).
\end{equation}
For $\nu_j\ge -1/2$, $j=1,\ldots, n,$, the kernel of $e^{-t\mathcal L_{\nu_j}}$ is given by
\begin{equation}
	\label{eq1-ptxy}
	p_t^{\nu_j}(x_j,y_j)=\f{2(rx_jy_j)^{1/2}}{1-r}\exp\Big(-\f{1}{2}\f{1+r}{1-r}(x_j^2+y_j^2)\Big)I_{\nu_j}\Big(\f{2r^{1/2}}{1-r}x_jy_j\Big),
\end{equation}
where $r=e^{-4t}$ and $I_\alpha$ is the usual Bessel funtions of an imaginary argument defined by
\[
I_\alpha(z)=\sum_{k=0}^\vc \f{\Big(\f{z}{2}\Big)^{\alpha+2k}}{k! \Gamma(\alpha+k+1)}, \ \ \ \ \alpha >-1.
\]
See for example \cite{Dziu, NS}.

Note that for each $j=1,\ldots, n$, we can rewrite the kernel $p_t^{\nu_j}(x_j,y_j)$ as follows
\begin{equation}
	\label{eq2-ptxy}
	\begin{aligned}
		p_t^{\nu_j}(x_j,y_j)=\f{2(rx_jy_j)^{1/2}}{1-r}&\exp\Big(-\f{1}{2}\f{1+r}{1-r}|x_j-y_j|^2\Big)\exp\Big(-\f{1-r^{1/2}}{1+r^{1/2}}x_jy_j\Big)\\
		&\times \exp\Big(-\f{2r^{1/2}}{1-r}x_jy_j\Big)I_{\nu_j}\Big(\f{2r^{1/2}}{1-r}x_jy_j\Big),
	\end{aligned}
\end{equation}
where $r=e^{-4t}$.

The following  properties of the Bessel function $I_\alpha$  with $\alpha>-1$ are well-known and are taken from \cite{L}:
\begin{equation}
	\label{eq1-Inu}
	I_\alpha(z)\sim z^\alpha, \ \ \ 0<z\le 1,
\end{equation}
\begin{equation}
	\label{eq2-Inu}
	I_\alpha(z)= \f{e^z}{\sqrt{2\pi z}}+S_\alpha(z),
\end{equation}
where
\begin{equation}
	\label{eq3-Inu}
	|S_\alpha(z)|\le  Ce^zz^{-3/2}, \ \ z\ge 1,
\end{equation}
\begin{equation}
	\label{eq4-Inu}
	\f{d}{dz}(z^{-\alpha}I_\alpha(z))=z^{-\alpha}I_{\alpha+1}(z).
\end{equation}

Let $\nu > -1$, we have
\[
I_\alpha(z)-I_{\alpha+1}(z)=[I_\alpha(z)-I_{\alpha+2}(z)] -[I_{\alpha+1}(z)-I_{\alpha+2}(z)].
\]
Applying \cite[(5.7.9)]{L}, 
\[
I_\alpha(z)-I_{\alpha+2}(z) =\f{2(\alpha+1)}{z}I_{\alpha+1}(z).
\]
Hence,
\[
|I_\alpha(z)-I_{\alpha+1}(z)|\le \f{2(\alpha+1)}{z}I_{\alpha+1}(z)+|I_{\alpha+1}(z)-I_{\alpha+2}(z)|.
\]
On the other hand, since $\alpha+1>-1/2$, we have
\[
	0< I_{\alpha+1}(z)-I_{\alpha+2}(z)<2(\alpha+2)\f{I_{\alpha+2}(z)}{z}<2(\alpha+2)\f{I_{\alpha+1}(z)}{z}, \ \ \ z>0.
\]
See for example \cite{Na}.

Consequently,
\begin{equation}
	\label{eq5-Inu}
	|I_\alpha(z)-I_{\alpha+1}(z)|<(4\alpha +6)\f{I_{\alpha+1}(z)}{z}, \ \ \ \alpha>-1, z>0.
\end{equation}

\subsection{The case $n=1$}	In this case, recall that the function $\gamma_\nu$ defined by \eqref{eq-gamma nu} becomes
\begin{equation*} 
	\gamma_\nu = \begin{cases}
		 -1/2-\nu, & \ -1<\nu<-1/2, \\
		0, \ \ &\ \nu\ge -1/2.
	\end{cases}
\end{equation*}

We first prove the following result.
\begin{prop}\label{prop-heat kernel}
	Let $\nu \ge -1$. The the following holds true.
	\begin{enumerate}[{\rm (i)}]
		\item If $\nu>-1/2$, then
		\begin{equation}
			\label{eq-ptxy nu > -1/2}
			\begin{aligned}
				p_t^\nu(x,y)\lesi \f{e^{-t/2}}{\sqrt t}\exp\Big(-\f{|x-y|^2}{ct}\Big)\Big(1+\f{\sqrt t}{\rho(x)} +\f{\sqrt t}{\rho(y)}\Big)^{-(\nu+1/2)}
			\end{aligned}
		\end{equation}
		for all $t>0$ and all $x,  y\in (0,\vc)$, where $\rho(x)=\min\{x,1/x\}$.
		
		\item If $\nu=-1/2$, then for any $N>0$
		\begin{equation}
			\label{eq-ptxy nu = -1/2}
			\begin{aligned}
				p_t^\nu(x,y)\lesi \f{e^{-t/2}}{\sqrt t}\exp\Big(-\f{|x-y|^2}{ct}\Big)\Big(1+\f{\sqrt t}{\rho(x)} +\f{\sqrt t}{\rho(y)}\Big)^{-N}
			\end{aligned}
		\end{equation}
		for all $t>0$ and all $x,  y\in (0,\vc)$, where $\displaystyle \rho(x)=\f{1}{1+x}$.
		
		\item If $-1<\nu<-1/2$, then
		\begin{equation}
			\label{eq-ptxy nu < -1/2}
			\begin{aligned}
				p_t^\nu(x,y)\lesi \f{e^{-t/2}}{\sqrt t}\exp\Big(-\f{|x-y|^2}{ct}\Big)\Big(1+\f{\sqrt t}{x}\Big)^{\gamma_\nu}\Big(1+\f{\sqrt t}{y}\Big)^{\gamma_\nu}
			\end{aligned}
		\end{equation}
		for all $t>0$ and all $x,  y\in (0,\vc)$.
	\end{enumerate}
\end{prop}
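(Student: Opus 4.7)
The plan is to begin from the factored representation \eqref{eq2-ptxy} and combine it with a uniform pointwise bound on the modified Bessel function. Using \eqref{eq1-Inu} for $z\le 1$ and \eqref{eq2-Inu}--\eqref{eq3-Inu} for $z\ge 1$, one first obtains
\[
e^{-z}I_\nu(z) \lesi \f{z^\nu}{(1+z)^{\nu+1/2}}, \qquad z>0,\ \nu>-1,
\]
which interpolates the $z\sim z^\nu$ behavior at the origin with the $z^{-1/2}$ behavior at infinity. Setting $r=e^{-4t}$ and $z=\f{2r^{1/2}xy}{1-r}$, one has
\[
\f{(rxy)^{1/2}}{1-r}\cdot \f{1}{\sqrt z}=\f{r^{1/4}}{\sqrt{2(1-r)}}\lesi \f{e^{-t/2}}{\sqrt t},
\]
the last step obtained by separating $t\le 1$ (where $1-r\asymp t$, $r^{1/4}\asymp 1$) from $t\ge 1$ (where $r^{1/4}=e^{-t}\lesi e^{-t/2}/\sqrt t$). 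Since $\coth(2t)>1/(2t)$ the Gaussian exponent is bounded by $|x-y|^2/(4t)$, and since $\tanh t=\f{1-r^{1/2}}{1+r^{1/2}}\asymp\min(t,1)$ the auxiliary exponential is $\exp(-c'\min(t,1)xy)$. Together these give the baseline estimate, valid for every $\nu>-1$,
\[
p_t^\nu(x,y) \lesi \f{e^{-t/2}}{\sqrt t}\exp\ContainB{-\f{|x-y|^2}{ct}}\exp\ContainB{-c'\min(t,1)xy}\ContainB{\f{z}{1+z}}^{\nu+1/2},
\]
which serves as the common starting point for all three parts.

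For part (i) ($\nu>-1/2$) the factor $\ContainB{z/(1+z)}^{\nu+1/2}$ is bounded by $1$, and the remaining task is to absorb it, together with the auxiliary exponential and a slice of the Gaussian, into the decay $\bigl(1+\sqrt t/\rho(x)+\sqrt t/\rho(y)\bigr)^{-(\nu+1/2)}$ with $\rho(x)=\min(x,1/x)$. I would proceed by a case analysis on whether $x,y\le 1$ or $\ge 1$ and on whether $\sqrt t\lesi \rho(x),\rho(y)$. In the near-diagonal regime $\sqrt t\le\min(\rho(x),\rho(y))$, the target factor is $\asymp 1$, so the baseline bound suffices. When $x,y\le 1$ but $\sqrt t$ dominates, the sharper estimate $\ContainB{z/(1+z)}^{\nu+1/2}\le\min(z,1)^{\nu+1/2}\lesi (xy/t)^{\nu+1/2}$ in the regime $xy\le t$, combined with the Gaussian decay $\exp(-|x-y|^2/(ct))$ to tame any excess in $y/\sqrt t$ when $y\gg\sqrt t\gg x$, produces the claimed factor. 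When $x\ge 1$ or $y\ge 1$, $\rho(x)=1/x$, and the auxiliary exponential $\exp(-c'\min(t,1)xy)$ provides arbitrary polynomial decay in $x\sqrt t=\sqrt t/\rho(x)$, which is enough.

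Part (ii) ($\nu=-1/2$) is cleanest because the closed form $I_{-1/2}(z)=\sqrt{2/(\pi z)}\cosh z$ gives $e^{-z}I_{-1/2}(z)\lesi z^{-1/2}$ on all of $(0,\infty)$, so the factor $\ContainB{z/(1+z)}^{\nu+1/2}$ collapses to $1$. In fact the prefactor then improves to $e^{-t}/\sqrt{\min(t,1)}$, leaving an ``extra'' $e^{-t/2}$ beyond what the statement demands; this excess absorbs $(1+\sqrt t)^N$ via the elementary bound $e^{-t/2}(1+\sqrt t)^N\lesi 1$ (with a constant depending on $N$), and the remaining $(1+x)^N(1+y)^N$ growth is absorbed by $\exp(-c'\min(t,1)xy)\exp(-|x-y|^2/(ct))$ through a short case analysis according to whether $\min(t,1)xy$ or $|x-y|^2/t$ dominates. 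For part (iii) ($-1<\nu<-1/2$) one writes $\ContainB{z/(1+z)}^{\nu+1/2}=\ContainB{(1+z)/z}^{\gamma_\nu}\le 1+z^{-\gamma_\nu}$ and uses $z\asymp e^{-2t}xy/\min(t,1)$ to get $z^{-\gamma_\nu}\lesi e^{2t\gamma_\nu}\ContainB{\sqrt{\min(t,1)}/x}^{\gamma_\nu}\ContainB{\sqrt{\min(t,1)}/y}^{\gamma_\nu}$, which after absorbing the $e^{2t\gamma_\nu}$ factor into the prefactor is dominated by $(1+\sqrt t/x)^{\gamma_\nu}(1+\sqrt t/y)^{\gamma_\nu}$; the constant $1$ in $1+z^{-\gamma_\nu}$ is covered by the ``$+1$'' in each factor of the product.

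The main obstacle I anticipate is the tedious but elementary bookkeeping in part (i), in which one juggles four or five subcases according to the relative sizes of $x,y,\sqrt t,1$ and verifies in each of them that the product of the Bessel, Gaussian, and auxiliary exponential factors produces the claimed decay in $\rho(x)$ and $\rho(y)$ simultaneously. Repeated use of the elementary inequalities $\exp(-u)\lesi u^{-N}$ (with a constant depending on $N$) and $1+A+B\le (1+A)(1+B)$ to convert exponential decay into polynomial decay makes this intricate but routine.
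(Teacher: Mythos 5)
Your route is sound in substance and is genuinely more self-contained than the paper's: the paper proves only part (iii) here, quoting (i) and (ii) from \cite[Proposition 3.1]{B}, and it does so by splitting into $t\le 1$ versus $t>1$ and $xy<t$ (resp.\ $xy<e^{2t}$) versus $xy\ge t$ (resp.\ $xy\ge e^{2t}$), invoking \eqref{eq1-Inu} in the small-argument regime and \eqref{eq2-Inu}--\eqref{eq3-Inu} in the large-argument regime of \eqref{eq1-ptxy}--\eqref{eq2-ptxy}. Your uniform bound $e^{-z}I_\nu(z)\lesi z^{\nu}(1+z)^{-(\nu+1/2)}$ packages those two regimes into a single baseline inequality, and for (iii) the identity $(z/(1+z))^{\nu+1/2}=((1+z)/z)^{\gamma_\nu}$ then reproduces exactly the same bookkeeping as the paper's four subcases, so the two arguments are essentially equivalent there. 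What your approach buys is a single starting point that also plausibly yields (i) and (ii); what it costs is that your part (i) is a plan rather than a proof (``a case analysis \dots produces the claimed factor''), and that case analysis is precisely the content the paper outsources to \cite{B}, so if you intend to prove (i)--(ii) rather than cite them you still owe the detailed verification.

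One concrete step does not survive scrutiny: in (iii), ``absorbing the $e^{2t\gamma_\nu}$ factor into the prefactor'' while retaining $e^{-t/2}/\sqrt t$. For $t\ge 1$ the exact prefactor is $r^{1/4}/\sqrt{1-r}\sim e^{-t}$, and $e^{-t}e^{2t\gamma_\nu}=e^{-(1-2\gamma_\nu)t}$, which for $\gamma_\nu\ge 1/4$ (i.e.\ $\nu\le -3/4$) is \emph{not} $O(e^{-t/2}/\sqrt t)$. In fact \eqref{eq-ptxy nu < -1/2} as stated, with the factor $e^{-t/2}$, cannot hold for $\nu\in(-1,-3/4)$: the bottom of the spectrum is $2(1+\nu)<1/2$, so for fixed $x=y$ one has $p_t^\nu(x,x)\sim e^{-2(1+\nu)t}$ as $t\to\infty$, which eventually exceeds $e^{-t/2}t^{-1/2}\big(1+\sqrt t/x\big)^{2\gamma_\nu}$. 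This is a defect of the statement rather than something you could repair, and the paper's own proof shares it: in Subcase 2.1 the displayed chain ends with $\f{1}{\sqrt t}\exp(-c(x^2+y^2))\big(\f{t}{xy}\big)^{\gamma_\nu}$, i.e.\ it establishes \eqref{eq-ptxy nu < -1/2} only with $1/\sqrt t$ in place of $e^{-t/2}/\sqrt t$, and that weaker form is all that is used later (e.g.\ in Lemma \ref{lem 1} and Propositions \ref{prop- delta k pt}--\ref{prop- partial k heat kernel}). So either drop the $e^{-t/2}$ (as the applications permit) or replace it by $e^{-2(1+\nu)t}$-type decay; with that adjustment your absorption step, and hence your proof of (iii), closes, but as literally written it fails for $\nu<-3/4$.
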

\begin{proof}
	
	The items (i) and (ii) were proved in \cite[Proposition 3.1]{B}. It remains to prove (iii). To do this, we consider two cases.

	\noindent\textbf{Case 1: $0<t\le 1$.} In this situation, we have $r \sim r^{1/2}\sim 1$ and $1-r\sim 1-r^{1/2}\sim t$ where $r=e^{-4t}$. We now consider two subcases: $xy<t$ and $xy>t$.
	
	\textbf{Subcase 1.1: $xy<t$.} By \eqref{eq1-ptxy} and \eqref{eq1-Inu} we have
	\begin{equation}
		\label{eq-est1-proof Prop1}
		\begin{aligned}
			p_t^\nu(x,y)&\lesi \f{1}{e^{t}}\exp\Big(-\f{x^2+y^2}{ct}\Big)\Big(\f{t}{xy}\Big)^{\gamma_\nu},
		\end{aligned}
	\end{equation}
which implies \eqref{eq-ptxy nu < -1/2}. 
	
	\textbf{Subcase 1.2: $xy\ge t$.} By \eqref{eq2-ptxy}, \eqref{eq2-Inu} and \eqref{eq3-Inu}  we have
	\begin{equation}
		\label{ptxy subcase 1.2}
		\begin{aligned}
			p_t^\nu(x,y)&\lesi \f{1}{\sqrt{t}}\exp\Big(-\f{|x-y|^2}{ct}\Big)\exp\Big(-c'txy\Big)\\
			&\lesi \f{1}{\sqrt{t}}\exp\Big(-\f{|x-y|^2}{ct}\Big),
		\end{aligned}
	\end{equation}
	which implies \eqref{eq-ptxy nu < -1/2}.
	
	\bigskip
	
	\noindent\textbf{Case 2: $t> 1$.} In this situation, we have $1-r\sim 1+r\sim 1+r^{1/2}\sim 1-r^{1/2}\sim 1$, where $r=e^{-4t}$. We also consider two subcases: $xy<t$ and $xy>t$.
	
	\textbf{Subcase 2.1: $xy<e^{2t}$.} By \eqref{eq1-ptxy}, \eqref{eq1-Inu} and the fact $1-2\gamma_\nu>0$  we have
	\begin{equation}\label{eq- heat kernel xy< e2t}
		\begin{aligned}
			p_t^\nu(x,y)&\lesi \f{1}{e^{t}}\exp\Big(-c(x^2+y^2)\Big)\Big(\f{1}{e^{-2t}xy}\Big)^{\gamma_\nu}\\
			&\lesi \f{1}{e^{t(1-2\gamma_\nu)}}\exp\Big(-c(x^2+y^2)\Big)\Big(\f{1}{xy}\Big)^{\gamma_\nu}\\
			&\lesi \f{1}{\sqrt t}\exp\Big(-c(x^2+y^2)\Big)\Big(\f{t}{xy}\Big)^{\gamma_\nu}\\
		\end{aligned}
	\end{equation}
	which implies \eqref{eq-ptxy nu < -1/2}.
	
	\textbf{Subcase 2.2: $xy\ge e^{2t}$.} By \eqref{eq2-ptxy}, \eqref{eq2-Inu} and \eqref{eq3-Inu}  we have
	\begin{equation}
		\label{ptxy subcase 2.2}
		\begin{aligned}
			p_t^\nu(x,y)&\lesi \f{1}{e^t}\exp\Big(-c|x-y|^2\Big)\exp\Big(-c'xy\Big)\\
			&\lesi \f{1}{e^t}\exp\Big(-\f{|x-y|^2}{ct}\Big) \Big(\f{1}{xy}\Big)^{\gamma_\nu},
		\end{aligned}
	\end{equation}
	which implies \eqref{eq-ptxy nu < -1/2}.
	
	This completes our proof.
\end{proof}

The following gives another upper bound for the heat kernel $p_t^\nu(x,y)$ which plays an essential role in the proof of the boundedness of the maximal operator.
\begin{lem}\label{lem 1}
	Let $\nu \in (-1,-1/2)$. Then there exists $c>0$ such that 
	\[
	p_t^\nu(x,y) \lesi \f{1}{\sqrt t}\exp\Big(-\f{|x-y|^2}{ct}\Big)+ \f{1}{x}\Big(\f{x}{y}\Big)^{\gamma_\nu}\chi_{\{y< x/2\}} +\f{1}{y}\Big(\f{y}{x}\Big)^{\gamma_\nu}\chi_{\{x/2\le y\}}
	\]
	for  $t>0$ and $x,y>0$.
\end{lem}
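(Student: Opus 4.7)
My plan is to deduce the bound from Proposition~\ref{prop-heat kernel}(iii), which yields
\[
p_t^\nu(x,y) \lesi \f{1}{\sqrt t}\exp\Big(-\f{|x-y|^2}{ct}\Big)\Big(1+\f{\sqrt t}{x}\Big)^{\gamma_\nu}\Big(1+\f{\sqrt t}{y}\Big)^{\gamma_\nu},
\]
by a case analysis on the sizes of $\sqrt t$ relative to $x$ and $y$, and the comparison of $x$ and $y$. I will repeatedly use $e^{-u}\lesi u^{-\alpha}$ (valid for any $\alpha>0$) to convert Gaussian decay into polynomial decay. The key structural fact is that $\gamma_\nu\in(0,1/2)$ when $\nu\in(-1,-1/2)$, so that both $\gamma_\nu$ and $1-\gamma_\nu$ are positive.

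First, if $\sqrt t\leq \min(x,y)$, both $(1+\sqrt t/x)^{\gamma_\nu}$ and $(1+\sqrt t/y)^{\gamma_\nu}$ are $\sim 1$, and Proposition~\ref{prop-heat kernel}(iii) directly produces the first term of the target bound. It thus suffices to handle the regime where $\sqrt t$ exceeds at least one of $x,y$, in which case the corresponding factor is replaced by $(\sqrt t/x)^{\gamma_\nu}$ or $(\sqrt t/y)^{\gamma_\nu}$. I then split according to the indicators in the target.

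In Case A, $y<x/2$, so $|x-y|\geq x/2$ and $e^{-|x-y|^2/(ct)}\lesi (t/x^2)^{\alpha}$ for any $\alpha>0$. Expanding the product $(1+\sqrt t/x)^{\gamma_\nu}(1+\sqrt t/y)^{\gamma_\nu}$ into monomials in $\sqrt t/x$ and $\sqrt t/y$ with exponents in $\{0,\gamma_\nu\}$, and choosing $\alpha=(1-\gamma_\nu)/2$ for monomials involving at most one $\gamma_\nu$ exponent (resp.\ $\alpha=1/2-\gamma_\nu$ for the monomial with both exponents equal to $\gamma_\nu$), all powers of $t$ cancel and the result $\lesi x^{\gamma_\nu-1}/y^{\gamma_\nu}=\f{1}{x}(x/y)^{\gamma_\nu}$ follows. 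In Case B, $y\geq x/2$. If $y>2x$ then $|x-y|\geq y/2$, and the Case A argument with the roles of $x$ and $y$ swapped yields $\f{1}{y}(y/x)^{\gamma_\nu}$. If $x/2\leq y\leq 2x$ then $x\sim y$ and the target simplifies to $\sim 1/x$; here I use $\sqrt t>\min(x,y)\sim x$ with $\gamma_\nu<1/2$ to obtain $\f{1}{\sqrt t}(1+\sqrt t/x)^{\gamma_\nu}(1+\sqrt t/y)^{\gamma_\nu}\lesi t^{\gamma_\nu-1/2}/x^{2\gamma_\nu}\lesi 1/x$ since $\gamma_\nu-1/2<0$ and $\sqrt t>x$.

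The main obstacle is the book-keeping across sub-cases: one must verify in each that the chosen exponent $\alpha$ kills the $t$-powers and that the surviving powers of $x$ and $y$ match the target precisely. Because $\gamma_\nu$ and $1-\gamma_\nu$ are both strictly positive, the exponents $\alpha=(1-\gamma_\nu)/2$ and $\alpha=1/2-\gamma_\nu$ used in Case A are canonical and the symmetric choices work in Case B, so no ad hoc adjustment is needed; the argument is reduced to elementary algebra of powers.
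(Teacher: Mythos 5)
Your argument is correct, but it takes a genuinely different route from the paper. You treat Proposition \ref{prop-heat kernel}(iii) as a black box and deduce the lemma by pure power counting: off the diagonal you use $|x-y|\gtrsim\max(x,y)$ together with $e^{-u}\lesssim u^{-\alpha}$ to trade the Gaussian for exactly the right power of $t/x^2$ (or $t/y^2$), and on the diagonal $x\sim y$ you use $\sqrt t\gtrsim x$ and $\gamma_\nu<1/2$; this works because for $\nu\in(-1,-1/2)$ one has $\gamma_\nu\in(0,1/2)$, so all the exponents you need are positive. The paper instead reopens the proof of Proposition \ref{prop-heat kernel}: it splits $t\le 1$ versus $t\ge 1$ and $xy<t$ versus $xy\ge t$ (resp. $xy\lessgtr e^{2t}$) and reuses the \emph{intermediate} bounds \eqref{eq-est1-proof Prop1}, \eqref{ptxy subcase 1.2}, \eqref{eq- heat kernel xy< e2t}, \eqref{ptxy subcase 2.2}, whose stronger $\exp(-(x^2+y^2)/ct)$ or $\exp(-c(x^2+y^2))$ decay makes the same subcases fall out; the ensuing algebra (the terms $E_1,E_2,E_3$ and $F_1,F_2,F_3$) is essentially the power counting you perform. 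Your version is more modular and somewhat shorter, since nothing beyond the stated proposition is needed; the paper's version carries sharper information through the cases but at the cost of repeating the $xy$-versus-$t$ case analysis. Two small bookkeeping remarks: the monomial with no $\gamma_\nu$-factor does not have its $t$-powers cancel under your choice $\alpha=(1-\gamma_\nu)/2$, but it is literally the first term of the target (or take $\alpha=1/2$ and use $y<x$), so nothing is lost; and for the single-factor monomial $(\sqrt t/x)^{\gamma_\nu}$ in Case A you land on $1/x$ and need the extra (trivial) observation $(y/x)^{\gamma_\nu}\lesssim 1$. Neither point affects the validity of the proof.
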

\begin{proof}
	\textbf{Case 1: $t\in (0,1)$.} In this situation,
	\[
	p_t^\nu(x,y)\le p_t^\nu(x,y)\chi_{\{xy\ge t\}}+  p_t^\nu(x,y)\chi_{\{xy< t\}}
	\]
	It is obvious that due to \eqref{ptxy subcase 1.2},
	\[
	p_t^\nu(x,y)\chi_{\{xy\ge t\}} \lesi \f{1}{\sqrt t}\exp\Big(-\f{|x-y|^2}{ct}\Big).
	\]
	For the second term, from \eqref{eq-est1-proof Prop1}, we have
	\[
	\begin{aligned}
		p_t^\nu(x,y)\chi_{\{xy< t\}}&\lesi \f{1}{\sqrt t}\exp\Big(-\f{|x-y|^2}{ct}\Big)\Big(\f{\sqrt t}{x}\Big)^{\gamma_\nu}\Big(\f{\sqrt t}{y}\Big)^{\gamma_\nu}\chi_{\{xy< t\}}\\
		&=\f{1}{\sqrt t}\exp\Big(-\f{|x-y|^2}{ct}\Big)\Big(\f{\sqrt t}{x}\Big)^{\gamma_\nu}\Big(\f{\sqrt t}{y}\Big)^{\gamma_\nu}\chi_{\{xy< t\}\cap\{y< x/2\}}\\
		& \ \ + \f{1}{\sqrt t}\exp\Big(-\f{|x-y|^2}{ct}\Big)\Big(\f{\sqrt t}{x}\Big)^{\gamma_\nu}\Big(\f{\sqrt t}{y}\Big)^{\gamma_\nu}\chi_{\{xy< t\}\cap\{y>2x\}}\\
		& \ \ + \f{1}{\sqrt t}\exp\Big(-\f{|x-y|^2}{ct}\Big)\Big(\f{\sqrt t}{x}\Big)^{\gamma_\nu}\Big(\f{\sqrt t}{y}\Big)^{\gamma_\nu}\chi_{\{xy< t\}\cap\{x/2\le y\le 2x\}}\\
		&=E_1 + E_2 + E_3.
	\end{aligned}
	\]
	For $E_1$, we have $|x-y|\sim x$ whenever $y\le x/2$. Hence,
	\[
	\begin{aligned}
		E_1&\lesi \f{1}{\sqrt t} \exp\Big(-\f{x^2}{ct}\Big) \Big(\f{\sqrt t}{x}\Big)^{\gamma_\nu}\Big(\f{\sqrt t}{y}\Big)^{\gamma_\nu}\chi_{\{xy< t\}\cap\{y< x/2\}}\\
		&\sim \f{1}{t^{(1-2\gamma_\nu)/2}} \exp\Big(-\f{x^2}{ct}\Big) \Big(\f{1}{x}\Big)^{\gamma_\nu}\Big(\f{1}{y}\Big)^{\gamma_\nu}\chi_{\{xy< t\}\cap\{y< x/2\}}\\
		&\lesi \f{1}{x^{ 1-2\gamma_\nu }}  \Big(\f{1}{x}\Big)^{\gamma_\nu}\Big(\f{1}{y}\Big)^{\gamma_\nu}\chi_{\{xy< t\}\cap\{y< x/2\}}\\
		&\sim \f{1}{x}\Big(\f{x}{y}\Big)^{\gamma_\nu}\chi_{\{y< x/2\}}.
	\end{aligned}
	\] 
	Similarly,
	\[
	E_2\lesi  \f{1}{y}\Big(\f{y}{x}\Big)^{\gamma_\nu}\chi_{\{y>2x\}}.
	\]
	
	For the last term $E_3$, if $x/2<y\le 2x$, then $x\sim y$. In this case,
	\[
	\begin{aligned}
		E_3&\lesi \f{1}{\sqrt t}  \Big(\f{\sqrt t}{x}\Big)^{\gamma_\nu}\Big(\f{\sqrt t}{y}\Big)^{\gamma_\nu}\chi_{\{xy< t\}\cap\{x/2\le y\le 2x\}}\sim \f{1}{t^{(1-2\gamma_\nu)/2}}   \Big(\f{1}{x}\Big)^{\gamma_\nu}\Big(\f{1}{y}\Big)^{2\gamma_\nu}\chi_{\{xy< t\}\cap\{x/2\le y\le 2x\}}\\
		&\lesi \f{1}{(xy)^{ (1-2\gamma_\nu)/2 }}  \Big(\f{1}{x}\Big)^{\gamma_\nu}\Big(\f{1}{y}\Big)^{\gamma_\nu}\chi_{\{xy< t\}\cap\{x/2\le y\le 2x\}}\\
		&\sim \f{1}{y^{  1-2\gamma_\nu  }}  \Big(\f{1}{x}\Big)^{\gamma_\nu}\Big(\f{1}{y}\Big)^{\gamma_\nu}\chi_{\{xy< t\}\cap\{x/2\le y\le 2x\}}\\
		&\sim \f{1}{y}\Big(\f{y}{x}\Big)^{\gamma_\nu}\chi_{\{x/2\le y\le 2x\}},
	\end{aligned}
	\]
	where in the second inequality we used $(xy)^{(1-2\gamma_\nu)/2}\le  t^{(1-2\gamma_\nu)/2}$ since $1-2\gamma_\nu>0$ and $t>xy$.
	
	\bigskip
	
	\textbf{Case 2: $t\ge 1$.} In this situation,
	\[
	p_t^\nu(x,y)\le p_t^\nu(x,y)\chi_{\{xy\ge e^{2t}\}}+  p_t^\nu(x,y)\chi_{\{xy< e^{2t}\}}
	\]
	From \eqref{ptxy subcase 2.2},
	\[
	p_t^\nu(x,y)\chi_{\{xy\ge e^{2t}\}} \lesi \f{1}{\sqrt t}\exp\Big(-\f{|x-y|^2}{ct}\Big).
	\]
	For the second term, by using \eqref{eq- heat kernel xy< e2t}, we can write
	\[
	\begin{aligned}
		p_t^\nu(x,y)\chi_{\{xy< e^{2t}\}}&\lesi \f{1}{e^t}\exp\Big(-\f{|x-y|^2}{ce^{2t}}\Big)\Big(\f{e^t}{x}\Big)^{\gamma_\nu}\Big(\f{e^t}{y}\Big)^{\gamma_\nu}\chi_{\{xy< e^{2t}\}}\\
		&=\f{1}{e^t}\exp\Big(-\f{|x-y|^2}{ce^{2t}}\Big)\Big(\f{e^t}{x}\Big)^{\gamma_\nu}\Big(\f{e^t}{y}\Big)^{\gamma_\nu}\chi_{\{xy< e^{2t}\}\cap\{y< x/2\}}\\
		& \ \ + \f{1}{e^t}\exp\Big(-\f{|x-y|^2}{ce^{2t}}\Big)\Big(\f{e^t}{x}\Big)^{\gamma_\nu}\Big(\f{e^t}{y}\Big)^{\gamma_\nu}\chi_{\{xy< e^{2t}\}\cap\{y>2x\}}\\
		& \ \ + \f{1}{e^t}\exp\Big(-\f{|x-y|^2}{ce^{2t}}\Big)\Big(\f{e^t}{x}\Big)^{\gamma_\nu}\Big(\f{e^t}{y}\Big)^{\gamma_\nu}\chi_{\{xy< e^{2t}\}\cap\{x/2\le y\le 2x\}}\\
		&=F_1 + F_2 + F_3.
	\end{aligned}
	\]
	For $E_1$, we have $|x-y|\sim x$ whenever $y\le x/2$. Hence,
	\[
	\begin{aligned}
		F_1&\lesi \f{1}{e^t} \exp\Big(-\f{x^2}{ce^{2t}}\Big) \Big(\f{e^t}{x}\Big)^{\gamma_\nu}\Big(\f{e^t}{y}\Big)^{\gamma_\nu}\chi_{\{xy< e^{2t}\}\cap\{y< x/2\}}\\
		&\sim \f{1}{e^{t(1-2\gamma_\nu)}} \exp\Big(-\f{x^2}{ce^{2t}}\Big) \Big(\f{1}{x}\Big)^{\gamma_\nu}\Big(\f{1}{y}\Big)^{\gamma_\nu}\chi_{\{xy< e^{2t}\}\cap\{y< x/2\}}\\
		&\lesi \f{1}{x^{ 1-2\gamma_\nu }}  \Big(\f{1}{x}\Big)^{\gamma_\nu}\Big(\f{1}{y}\Big)^{\gamma_\nu}\chi_{\{xy< e^{2t}\}\cap\{y< x/2\}}\\
		&\sim \f{1}{x}\Big(\f{x}{y}\Big)^{\gamma_\nu}\chi_{\{y< x/2\}}.
	\end{aligned}
	\] 
	Similarly,
	\[
	E_2\lesi  \f{1}{y}\Big(\f{y}{x}\Big)^{\gamma_\nu}\chi_{\{y>2x\}}.
	\]
	
	For the last term $E_3$, if $x/2<y\le 2x$, then we have
	\[
	\begin{aligned}
		E_3&\lesi \f{1}{e^t}  \Big(\f{e^t}{x}\Big)^{\gamma_\nu}\Big(\f{e^t}{y}\Big)^{\gamma_\nu}\chi_{\{xy< e^{2t}\}\cap\{x/2\le y\le 2x\}}\sim \f{1}{e^{t(1-2\gamma_\nu)}}   \Big(\f{1}{x}\Big)^{\gamma_\nu}\Big(\f{1}{y}\Big)^{2\gamma_\nu}\chi_{\{xy< e^{2t}\}\cap\{x/2\le y\le 2x\}}\\
		&\lesi \f{1}{(xy)^{ (1-2\gamma_\nu)/2 }}  \Big(\f{1}{x}\Big)^{\gamma_\nu}\Big(\f{1}{y}\Big)^{\gamma_\nu}\chi_{\{xy< e^{2t}\}\cap\{x/2\le y\le 2x\}}\\
		&\sim \f{1}{y^{  1-2\gamma_\nu  }}  \Big(\f{1}{x}\Big)^{\gamma_\nu}\Big(\f{1}{y}\Big)^{\gamma_\nu}\chi_{\{xy< e^{2t}\}\cap\{x/2\le y\le 2x\}}\\
		&\sim \f{1}{y}\Big(\f{y}{x}\Big)^{\gamma_\nu}\chi_{\{x/2\le y\le 2x\}},
	\end{aligned}
	\]
	where in the second inequality we used $(xy)^{(1-2\gamma_\nu)/2}\le e^{t(1-2\gamma_\nu)}$.
	
	This completes our proof.
\end{proof}

\bigskip

We now take care of the estimate for $\delta p_t^\nu(x,y)$. To do this, from \eqref{eq1-ptxy} we can rewrite $p_t^\nu(x,y)$ as
\begin{equation}\label{eq- form ptxy for derivative}
p_t^\nu(x,y)=\Big(\f{2\sqrt r}{1-r}\Big)^{1/2}\Big(\f{2r^{1/2}}{1-r}xy\Big)^{\nu+1/2}\exp\Big(-\f{1}{2}\f{1+r}{1-r}(x^2+y^2)\Big)\Big(\f{2r^{1/2}}{1-r}xy\Big)^{-\nu}I_\nu\Big(\f{2r^{1/2}}{1-r}xy\Big),
\end{equation}
where $r=e^{-4t}$.

Setting 
$$
H_\nu(r;x,y)=\Big(\f{2r^{1/2}}{1-r}xy\Big)^{-\nu}I_\nu\Big(\f{2r^{1/2}}{1-r}xy\Big),
$$
then
\begin{equation}\label{eq-new formula of heat kernel}
	p_t^\nu(x,y)=\Big(\f{2r^{1/2}}{1-r}\Big)^{1/2}\Big(\f{2r^{1/2}}{1-r}xy\Big)^{\nu+1/2}\exp\Big(-\f{1}{2}\f{1+r}{1-r}(x^2+y^2)\Big)H_\nu(r;x,y). 
\end{equation}
From \eqref{eq-new formula of heat kernel}, applying the chain rule,
\begin{equation}\label{eq- chain rule}
	\begin{aligned}
		\partial_x p_t^\nu(x,y)&=  \f{\nu+1/2}{x}p_t^\nu(x,y)- \f{1+r}{1-r}xp_t^\nu(x,y) + \f{2r^{1/2}}{1-r}y p_t^{\nu+1}(x,y),
	\end{aligned}
\end{equation}
which implies
\begin{equation}\label{eq- chain rule 2}
	\begin{aligned}
		\delta p_t^\nu(x,y)&=  xp_t^\nu(x,y)- \f{1+r}{1-r}xp_t^\nu(x,y) + \f{2r^{1/2}}{1-r}y p_t^{\nu+1}(x,y).
	\end{aligned}
\end{equation}
On the other hand, since $\partial (xf) = f + \partial f$, we have
\begin{equation}
	\label{eq-formula for delta m}
	\delta(xf) = f + \delta f.
\end{equation}
%By induction, for $m\in \mathbb N$ we have
%\begin{equation}
%	\label{eq-formula for delta m}
%	\delta^m (xf) = m\delta^{m-1}f + x\delta^mf
%\end{equation}
%for a suitable function $f$.

\begin{prop}
	\label{prop- delta k pt} Let $\nu>-1$. Then we have
	\begin{equation}
		\label{eq- delta k pt}
		| \delta   p_t^\nu(x,y)|\lesi   \f{1}{t}\exp\Big(-\f{|x-y|^2}{ct}\Big) \Big(1+ \f{\sqrt t}{y}\Big)^{\gamma_\nu}
	\end{equation}
	for $t>0$ and $x,y\in (0,\vc)$.
\end{prop}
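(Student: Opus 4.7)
The starting point is the identity \eqref{eq- chain rule 2}, which can be rewritten as
$$
\delta p_t^\nu(x,y) = -\f{2r}{1-r}\, x\, p_t^\nu(x,y) + \f{2r^{1/2}}{1-r}\, y\, p_t^{\nu+1}(x,y), \qquad r = e^{-4t}.
$$
Since $\nu>-1$ forces $\nu+1>-1/2$, we have $\gamma_{\nu+1}=0$, and Proposition~\ref{prop-heat kernel}(i) provides the clean Gaussian bound $p_t^{\nu+1}(x,y)\lesi t^{-1/2} e^{-|x-y|^2/ct}$. The plan is to bound the two terms separately, mirroring the four-subcase split of the proof of Proposition~\ref{prop-heat kernel}(iii): $t\in(0,1]$ or $t>1$, and, within each, $xy<t$ (resp.\ $xy<e^{2t}$) or $xy\geq t$ (resp.\ $xy\geq e^{2t}$). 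Both prefactors are $O(1/t)$ for small $t$ and decay exponentially for $t>1$; the large-$t$ regime is handled directly by inserting the bounds of Proposition~\ref{prop-heat kernel}, so I focus on the small-$t$ case below.

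For the $y\,p_t^{\nu+1}$ term, the decomposition $y=(y-x)+x$ reduces the estimate to controlling $x\,p_t^{\nu+1}$, since the $(y-x)$-piece is absorbed via $|x-y|\,t^{-1/2} e^{-|x-y|^2/ct}\lesi e^{-|x-y|^2/c't}$. When $xy\geq t$ one exploits the algebraic identity
$$
(1+r)(x^2+y^2)-4r^{1/2}xy = (1-r^{1/2})^2(x^2+y^2)+2r^{1/2}(x-y)^2,
$$
which, combined with the large-argument Bessel asymptotic \eqref{eq2-Inu}--\eqref{eq3-Inu}, produces the additional factor $e^{-t(x^2+y^2)/c}$; then $x\,e^{-tx^2/c}\lesi t^{-1/2}$ absorbs the leftover $x$. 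When $xy<t$, the small-argument asymptotic \eqref{eq1-Inu} combined with the Gaussian $e^{-(x^2+y^2)/(ct)}$ in \eqref{eq-est1-proof Prop1} handles the estimate directly, since $\gamma_{\nu+1}=0$ removes any $(xy)^{-\gamma}$ singularity.

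For the $x\,p_t^\nu$ term, the case $\nu\geq -1/2$ is analogous (with $\gamma_\nu=0$). When $\nu\in(-1,-1/2)$, Proposition~\ref{prop-heat kernel}(iii) carries the extra factor $(1+\sq t/x)^{\gamma_\nu}(1+\sq t/y)^{\gamma_\nu}$, and the $x$-singular piece must be absorbed by the prefactor $x$. The key elementary inequality is
$$
x\bigl(1+\sq t/x\bigr)^{\gamma_\nu}\lesi x+\sq t,
$$
which follows by splitting at $x=\sq t$ and using $0<\gamma_\nu<1/2$. After this absorption the analysis proceeds exactly as for the first term, and the surviving $(1+\sq t/y)^{\gamma_\nu}$ factor matches the target.

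The main obstacle is the careful bookkeeping across the four subcases, most delicate in the regime $xy<t$ where the kernel carries the intrinsic singularity $(t/(xy))^{\gamma_\nu}$. The crucial cancellation is that the $x$-singularity of $p_t^\nu$ is exactly balanced by the prefactor $x$ (up to the harmless additive $\sq t$); this is a manifestation of $\delta$ being designed to annihilate the $x^{\nu+1/2}$ boundary behaviour of the Laguerre eigenfunctions, which makes $\delta p_t^\nu$ regular at $x=0$ and leaves only the $y$-singularity, precisely captured by $(1+\sq t/y)^{\gamma_\nu}$.
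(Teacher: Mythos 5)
Your algebraic rewriting of \eqref{eq- chain rule 2} as $\delta p_t^\nu=-\f{2r}{1-r}xp_t^\nu+\f{2r^{1/2}}{1-r}yp_t^{\nu+1}$ is correct, and so are your auxiliary inequalities ($x(1+\sqrt t/x)^{\gamma_\nu}\lesi x+\sqrt t$, the exponent identity $(1+r)(x^2+y^2)-4r^{1/2}xy=(1-r^{1/2})^2(x^2+y^2)+2r^{1/2}(x-y)^2$). But the overall strategy --- bounding the two terms of this identity \emph{separately} --- breaks down exactly in the regime you call the hard one: $t$ small, $xy\ge t$, $x\sim y$. Follow your own bookkeeping there: the second term is $\sim\f{y}{t}p_t^{\nu+1}(x,y)$, and even with the refined bound $p_t^{\nu+1}\lesi t^{-1/2}e^{-|x-y|^2/(ct)}e^{-ct(x^2+y^2)}$ the absorption $ye^{-cty^2}\lesi t^{-1/2}$ only yields
\[
\f{y}{t}p_t^{\nu+1}(x,y)\lesi \f{1}{t^{2}}\exp\Big(-\f{|x-y|^2}{ct}\Big),
\]
which overshoots the target $\f1t e^{-|x-y|^2/(ct)}\tyg$ by a factor $1/t$. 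This is not an artifact of a lossy estimate: at $x=y\sim t^{-1/2}$ one has $p_t^{\nu}(x,x)\sim p_t^{\nu+1}(x,x)\sim t^{-1/2}$, so each of the two terms in your decomposition is genuinely of size $\sim t^{-2}$ while $\delta p_t^\nu(x,x)$ is only of size $\sim t^{-1}$. Hence no case analysis can rescue the separate bounds; the proposition in this regime is a statement about a cancellation between the two terms, which your proposal never uses.

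The paper's proof is organized precisely around this cancellation: for $xy\ge t$ it regroups the identity so that the dangerous pieces appear as $\f{1+r}{1-r}x\,p_t^\nu-\f{2r^{1/2}}{1-r}x\,p_t^{\nu+1}$, and then splits this into $\f{1+r}{1-r}x\,[p_t^\nu-p_t^{\nu+1}]$, controlled by the Bessel difference estimate \eqref{eq5-Inu} (equivalently $|p_t^\nu-p_t^{\nu+1}|\lesi \f{t}{xy}p_t^{\nu+1}$), plus $\f{1+r-2r^{1/2}}{1-r}x\,p_t^{\nu+1}=\f{1-r^{1/2}}{1+r^{1/2}}x\,p_t^{\nu+1}$, whose coefficient is $\sim t$ rather than $\sim 1/t$; the remaining piece $\f{2r^{1/2}}{1-r}(y-x)p_t^{\nu+1}$ is the harmless one you treated. (Your heuristic about $\delta$ cancelling the $x^{\nu+1/2}$ boundary behaviour explains the absence of an $x$-singularity near $x=0$, but the cancellation needed here is a different one, active for $x\sim y$ large, and it is supplied by \eqref{eq5-Inu}.) Your treatment of the small-argument regime $xy<t$ and of $t\ge1$ is essentially fine, but without the regrouping step the proof does not go through.
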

\begin{proof}
	For $\nu\ge -1/2$, the estimate \eqref{eq- delta k pt} is a direct consequence of \cite[Propositions 3.2 \& 3.6]{B}. We need only to take case of the case $-1<\nu<-1/2$. To do this, we consider two cases.

		\noindent\textbf{Case 1: $t \in (0,1)$.}  
		We also have two subcases.
	
	\textbf{Subcase 1.1: $xy<t$.} From \eqref{eq- chain rule 2} and the facts that $r\sim 1 + r \sim 1 $ and $1-r\sim t$, we have
	\[
	\begin{aligned}
		\delta p_t^\nu(x,y)&\lesi   xp_t^\nu(x,y)+ \f{x}{t}p_t^\nu(x,y) + \f{y}{t} p_t^{\nu+1}(x,y)\\
		&\lesi   \f{x}{t}p_t^\nu(x,y) + \f{y}{t} p_t^{\nu+1}(x,y).
	\end{aligned}
	\]
	Using \eqref{eq-est1-proof Prop1},
	\begin{equation*}%\label{eq-xpt 2}
		\begin{aligned}
			\f{x}{t}p_t^\nu(x,y)&\lesi \f{x}{t}\f{1}{\sqrt t}\exp\Big(-\f{x^2+y^2}{ct}\Big)\Big(\f{t}{xy}\Big)^{\gamma_\nu}\\
			&\lesi \Big(\f{x}{\sqrt t}\Big)^{1-\gamma_\nu}\f{1}{t}\exp\Big(-\f{x^2+y^2}{ct}\Big)\Big(\f{\sqrt t}{y}\Big)^{\gamma_\nu}\\
			&\lesi  \f{1}{ t}\exp\Big(-\f{x^2+y^2}{2ct}\Big)\Big(\f{\sqrt t}{y}\Big)^{\gamma_\nu}.
		\end{aligned}
	\end{equation*}
	In addition, By \eqref{eq1-ptxy} and \eqref{eq1-Inu},
	\[
		\begin{aligned}
			\f{y}{t}p_t^{\nu+1}(x,y)&\lesi \f{y}{\sqrt t}\f{1}{ t}\exp\Big(-\f{x^2+y^2}{ct}\Big)\Big(\f{xy}{t}\Big)^{\nu+3/2}\\
			&\lesi  \f{1}{ t}\exp\Big(-\f{x^2+y^2}{2ct}\Big)\\
			&\lesi  \f{1}{ t}\exp\Big(-\f{x^2+y^2}{2ct}\Big)\Big(\f{\sqrt t}{y}\Big)^{\gamma_\nu}.
		\end{aligned}		
	\]
	These two estimates proves \eqref{eq- delta k pt}.
	
	\textbf{Subcase 1.2: $xy\ge t$.} From \eqref{eq- chain rule 2}, we rewrite as follows
	\[
	\begin{aligned}
		|\delta p_t^\nu(x,y)|&=  \Big|xp_t^\nu(x,y)- \f{1+r}{1-r}xp_t^\nu(x,y) +\f{2r^{1/2}}{1-r}x p_t^{\nu+1}(x,y)+ \f{2r^{1/2}}{1-r}(y-x) p_t^{\nu+1}(x,y)\Big|\\
		&\lesi xp_t^\nu(x,y) + \Big|\f{1+r}{1-r}xp_t^\nu(x,y) -\f{2r^{1/2}}{1-r}x p_t^{\nu+1}(x,y)\Big| + \f{|x-y|}{t}p_t^{\nu+1}(x,y)\\
		&=: E_1 + E_2 + E_3.
	\end{aligned}
	\]
	The term $E_3$ is straightforward. Indeed, from Proposition \ref{prop-heat kernel},
	\[
	\begin{aligned}
		E_3&\lesi \f{|x-y|}{t}p_t^{\nu+1}(x,y)\\
		&\lesi \f{|x-y|}{t}\f{1}{ \sqrt t}\exp\Big(-\f{|x-y|^2}{ct}\Big)\\
		&\lesi \f{|x-y|}{t}\f{1}{ \sqrt t}\f{\sqrt t}{|x-y|}\exp\Big(-\f{|x-y|^2}{2ct}\Big)\tyg\\
		&\lesi  \f{1}{   t}\exp\Big(-\f{|x-y|^2}{2ct}\Big)\tyg.
	\end{aligned}
	\]
	
	We now take care of $E_1$.  If $y<x/2$, then $x\gtrsim \sqrt t$ and $|x-y|>x/2$. Applying Proposition \ref{prop-heat kernel},
	\[
	\begin{aligned}
		xp_t^\nu(x,y)&\lesi  \f{x}{\sqrt t}\exp\Big(-\f{|x-y|^2}{ct}\Big) \tyg\\
		&\lesi    \f{x}{\sqrt t}\f{\sqrt t}{|x-y|}\exp\Big(-\f{|x-y|^2}{2ct}\Big) \tyg\\
		&\lesi \f{1}{t}\exp\Big(-\f{|x-y|^2}{2ct}\Big)\tyg.
	\end{aligned}
	\]
	If $y> 2x$ and $x\ge \sqrt t$, similarly to above, since we also have $|x-y|>x/2$, we have
	\[
	\begin{aligned}
		xp_t^\nu(x,y)
		&\lesi \f{1}{t}\exp\Big(-\f{|x-y|^2}{2ct}\Big)\tyg.
	\end{aligned}
	\]
	If $y> 2x$ and $x< \sqrt t$, then from Proposition \ref{prop-heat kernel},
	\[
	\begin{aligned}
		xp_t^\nu(x,y)&\lesi  \f{x}{\sqrt t}\exp\Big(-\f{|x-y|^2}{ct}\Big)\Big(\f{\sqrt t}{x}\Big)^{\gamma_\nu}\tyg\\
		&\lesi    \Big(\f{x}{\sqrt t}\Big)^{1-\gamma_\nu}\exp\Big(-\f{|x-y|^2}{2ct}\Big)\tyg\\
		&\lesi \exp\Big(-\f{|x-y|^2}{2ct}\Big)\tyg\\
		&\lesi \f{1}{t}\exp\Big(-\f{|x-y|^2}{2ct}\Big)\tyg
	\end{aligned}
	\]
	Otherwise, if $x/2\le y\le 2x$,  then $x\sim y\gtrsim \sqrt t$ since $xy\ge t$. In this case, by the first inequality in 	 \eqref{ptxy subcase 1.2}, we have\[
	p_t^\nu(x,y)\le \f{C}{\sqrt{t}}\exp\Big(-\f{|x-y|^2}{ct}\Big)\exp\Big(-c'txy\Big),
	\]
	which implies
	\[
	\begin{aligned}
		xp_t^\nu(x,y)&\lesi \f{x}{\sqrt{t}}\exp\Big(-\f{|x-y|^2}{ct}\Big) \Big(\f{1}{\sqrt {txy}}\Big)\\
		&\sim  \f{x}{\sqrt{t}}\exp\Big(-\f{|x-y|^2}{ct}\Big) \Big(\f{1}{\sqrt {t}x}\Big)\\
		&\sim  \f{1}{t}\exp\Big(-\f{|x-y|^2}{ct}\Big) \\
		&\sim  \f{1}{t}\exp\Big(-\f{|x-y|^2}{ct}\Big)  \Big(1+ \f{\sqrt t}{y}\Big)^{\gamma_\nu}.
	\end{aligned}
	\]
	%since in this situation,
	%\[
	%\txg\sim\tyg \sim 1.
	%\]
	
	It remains to estimate $E_2$. To do this we write
	\[
	\begin{aligned}
		E_2 &= \Big|\f{1+r}{1-r}x[p_t^\nu(x,y)-p_t^{\nu+1}(x,y)] +\f{1-2r^{1/2}+r}{1-r}x p_t^{\nu+1}(x,y)\Big|\\
		&\le \f{1+r}{1-r}x\big|p_t^\nu(x,y)-p_t^{\nu+1}(x,y)\big| + \f{(1-\sqrt r)^{1/2}}{1-r}x p_t^{\nu+1}(x,y)\\
		&\sim \f{1+r}{1-r}x\big|p_t^\nu(x,y)-p_t^{\nu+1}(x,y)\big| + \f{1-\sqrt r}{1+\sqrt r}x p_t^{\nu+1}(x,y)\\
		&\lesi \f{x}{t}\big|p_t^\nu(x,y)-p_t^{\nu+1}(x,y)\big| + txp_t^{\nu+1}(x,y)=: E_{21} + E_{22}.	
	\end{aligned}
	\]
	where in the last inequality we used $1-\sqrt r \sim 
	t$ and $1+r\sim 1$.
	
	Similarly to $E_1$, we have
	\[
	E_{22}\lesi \f{1}{t}\exp\Big(-\f{|x-y|^2}{ct}\Big)\Big(1+ \f{\sqrt t}{y}\Big)^{\gamma_\nu}.
	\]
	For the term $E_{21}$, using \eqref{eq5-Inu} and \eqref{eq1-ptxy}, we have
	\[
	\begin{aligned}
		\big|p_t^\nu(x,y)-p_t^{\nu+1}(x,y)\big|&\lesi \f{1-r}{2\sqrt r xy}p_t^{\nu+1}(x,y)\\
		&\sim \f{t}{xy}p_t^{\nu+1}(x,y).
	\end{aligned}
	\]
	which implies 
	\[
	E_{21}\lesi \f{t}{xy}\frac{x}{t}p_t^{\nu+1}(x,y).
	\]
	If $y\le x/2$ or $y\ge 2x$, then $|x-y|\gtrsim x$. Hence,  by Proposition \ref{prop-heat kernel} and the fact $xy\ge t$,
	\[
	\begin{aligned}
		E_{21}&\lesi  \f{x}{t}\f{1}{\sqrt t}\exp\Big(-\f{|x-y|^2}{ct}\Big)\\
		&\lesi  \f{x}{t}\f{\sqrt t}{|x-y|}\f{1}{\sqrt t}\exp\Big(-\f{|x-y|^2}{2ct}\Big)\\
		&\lesi  \f{1}{\sqrt t}\exp\Big(-\f{|x-y|^2}{2ct}\Big)\\
		&\lesi \f{1}{t}\exp\Big(-\f{|x-y|^2}{2ct}\Big)\tyg.
	\end{aligned} 
	\]
	
	If $x/2<y<2x$, then $x\sim y\gtrsim \sqrt t$ since $xy\ge t$. Hence, by Proposition \ref{prop-heat kernel},
	\[
	\begin{aligned}
		E_{21}&\lesi \f{t}{xy}\f{x}{t}\f{1}{\sqrt t}\exp\Big(-\f{|x-y|^2}{ct}\Big) \\
		&\lesi \f{1}{y}\f{1}{\sqrt t}\exp\Big(-\f{|x-y|^2}{ct}\Big)\\
		&\lesi  \f{1}{t}\exp\Big(-\f{|x-y|^2}{ct}\Big)\\
		&\lesi \f{1}{t}\exp\Big(-\f{|x-y|^2}{ct}\Big)\Big(1+ \f{\sqrt t}{y}\Big)^{\gamma_\nu}.
	\end{aligned} 
	\]
%	where in the last inequality we used 
%	\[
%	\txg\sim\tyg \sim 1,
%	\]
%	since $x\sim y\gtrsim \sqrt t$.
	
\bigskip	
	
	\noindent\textbf{Case 2: $t\ge 1$.}	In this case, $1+r\sim 1-r\sim 1$ and $r\le 1$. Hence,
	\[
	\begin{aligned}
		|\delta p_t^\nu(x,y)|\lesi xp_t^\nu(x,y) +  y p_t^{\nu+1}(x,y).
	\end{aligned}
	\]
	\textbf{Subcase 2.1: $xy<e^{2t}$.} By \eqref{eq1-ptxy}, \eqref{eq1-Inu}  and $1-2\gamma_\nu>0$ we have
	\begin{equation*}
		\begin{aligned}
			xp_t^\nu(x,y)&\lesi x\f{1}{e^{t}}\exp\Big(-c(x^2+y^2)\Big)\Big(\f{1}{e^{-2t}xy}\Big)^{\gamma_\nu}\\
			&\lesi \f{x^{1-\gamma_\nu}}{e^{t(1-2\gamma_\nu)}}\exp\Big(-c(x^2+y^2)\Big)\Big(\f{1}{y}\Big)^{\gamma_\nu}\\
			&\lesi \f{1}{t}\exp\Big(-c(x^2+y^2)/2\Big)\Big(\f{\sqrt t}{y}\Big)^{\gamma_\nu}\\
		\end{aligned}
	\end{equation*}
	
	which implies
	\begin{equation*}%\label{eq-xpt 1}
	\begin{aligned}
		xp_t^\nu(x,y)&\lesi \f{1}{t}\exp\Big(-\f{|x-y|^2}{ct}\Big) \Big(1+ \f{\sqrt t}{y}\Big)^{\gamma_\nu},
	\end{aligned}
	\end{equation*}
	as desired.
	
	For the second term, by \eqref{eq1-ptxy}, \eqref{eq1-Inu}  we have
	\begin{equation*}
		\begin{aligned}
			yp_t^{\nu+1}(x,y)&\lesi y\f{1}{e^{t}}\exp\Big(-c(x^2+y^2)\Big)[e^{-2t}xy]^{\nu+3/2}\\
			&\lesi \f{1}{\sqrt te^{t}}\exp\Big(-c(x^2+y^2)/2\Big) \\
			&\lesi \f{1}{t}\exp\Big(-c(x^2+y^2)/2\Big)\Big(\f{\sqrt t}{y}\Big)^{\gamma_\nu}.
		\end{aligned}
	\end{equation*}
	
	\medskip
	
	\textbf{Subcase 2.2: $xy\ge e^{2t}$.} From the first inequality of  \eqref{ptxy subcase 2.2},
	\begin{equation}
		\label{ptxy subcase 2.2 s}
		\begin{aligned}
		p_t^\nu(x,y)&\lesi \f{1}{e^t}\exp\Big(-c|x-y|^2\Big)\exp\Big(-c'xy\Big)\\
		&\lesi \f{1}{e^t}\exp\Big(-c|x-y|^2\Big)\Big(\f{1}{xy}\Big)^{\gamma_\nu}\\
		&\lesi \f{1}{e^t}\exp\Big(-c|x-y|^2\Big)\Big(\f{t}{xy}\Big)^{\gamma_\nu}.
	\end{aligned}
\end{equation}

	Since $|x-y|+y\ge x$, either $|x-y|>x/2$ or $y>x/2$. If $|x-y|>x/2$,  then  we have
	\[
	\begin{aligned}
		xp_t^\nu(x,y)&\lesi \f{x}{e^{t}}\exp\Big(-c|x-y|^2\Big)\Big(\f{1}{|x-y|}\Big)^{1-\gamma_\nu}\Big(\f{t}{xy}\Big)^{\gamma_\nu} \\
		&\sim \f{t^{\gamma_\nu/2}x^{1-\gamma_\nu}}{e^{t}}\exp\Big(-c|x-y|^2\Big)^{1-\gamma_\nu}\Big(\f{1}{|x-y|}\Big)\Big(\f{\sqrt t}{y}\Big)^{\gamma_\nu} \\
		&\lesi \f{t^{\gamma_\nu/2}}{e^{t}}\exp\Big(-\f{|x-y|^2}{ct}\Big)\Big(\f{\sqrt t}{y}\Big)^{\gamma_\nu}\\
		&\lesi \f{1}{t}\exp\Big(-\f{|x-y|^2}{ct}\Big)\tyg.
	\end{aligned}
	\]
	
	If $y>x/2$, then from the first inequality of \eqref{ptxy subcase 2.2} we have
	\[
	\begin{aligned}
		x p_t^\nu(x,y)&\lesi \f{x }{e^t}\exp\Big(-\f{|x-y|^2}{ct}\Big)\Big(\f{1}{xy}\Big)^{\gamma_\nu/2+1/2}\\
		&\lesi \f{1}{e^{t}}\exp\Big(-\f{|x-y|^2}{ct}\Big)\Big(\f{1}{y}\Big)^{\gamma_\nu}\\
		&\lesi \f{1}{t}\exp\Big(-\f{|x-y|^2}{ct}\Big)\tyg.
	\end{aligned}
	\]
	For the reaming term $yp^{\nu+1}_t(x,y)$, we write
	\[
	yp^{\nu+1}_t(x,y) = (y-x)p^{\nu+1}_t(x,y)+xp^{\nu+1}_t(x,y).
	\]
	The term $xp^{\nu+1}_t(x,y)$ follows by using the above argument while the term $(y-x)p^{\nu+1}_t(x,y)$ follows from Proposition \ref{prop-heat kernel}.

	\bigskip

	This completes our proof.
\end{proof}

\begin{prop}
	\label{prop-delta dual heat kernel}
	Let $\nu>-1$. Then 
		\[
		| \delta^*p^{\nu+1}_t(x,y)|\lesi \f{1}{t}\exp\Big(-\f{|x-y|^2}{ct}\Big)\Big(1+\f{\sqrt t}{x} \Big)^{\gamma_\nu}
		\]
		for all $t>0$ and $x,y\in (0,\vc)$.
	
	\end{prop}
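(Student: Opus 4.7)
My plan is to mimic the structure of the proof of Proposition \ref{prop- delta k pt}, deriving an explicit expression for $\delta_\nu^* p_t^{\nu+1}(x,y)$ and then bounding each piece by the same case analysis on $(t, xy)$. The range $\nu \ge -1/2$ (where $\gamma_\nu = 0$) should follow from the estimates in \cite[Propositions 3.2 \& 3.6]{B}, so I will concentrate on $-1 < \nu < -1/2$. First, applying the chain rule \eqref{eq- chain rule} with $\nu$ replaced by $\nu+1$ gives
\[
\partial_x p_t^{\nu+1}(x,y) = \tfrac{\nu+3/2}{x}\, p_t^{\nu+1} - \tfrac{1+r}{1-r}\, x\, p_t^{\nu+1} + \tfrac{2r^{1/2}}{1-r}\, y\, p_t^{\nu+2},
\]
and substituting into $\delta_\nu^* = -\partial_x + x - \frac{\nu+1/2}{x}$, together with inserting $\pm \tfrac{2r^{1/2}}{1-r} x\, p_t^{\nu+2}$, yields the four-term decomposition
\[
\delta_\nu^* p_t^{\nu+1} = x\, p_t^{\nu+1} + \Big[\tfrac{1+r}{1-r}\, x\, p_t^{\nu+1} - \tfrac{2r^{1/2}}{1-r}\, x\, p_t^{\nu+2}\Big] + \tfrac{2r^{1/2}}{1-r}(x-y)\,p_t^{\nu+2} - \tfrac{2(\nu+1)}{x}\, p_t^{\nu+1},
\]
which I will denote $A_1 + A_2 + A_3 + A_4$.

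The terms $A_1, A_2, A_3$ are structural analogues of $E_1, E_2, E_3$ from the proof of Proposition \ref{prop- delta k pt}, with $\nu$ shifted to $\nu+1$. Since $\nu+1, \nu+2 > -1/2$, the cleaner Gaussian estimate \eqref{eq-ptxy nu > -1/2} of Proposition \ref{prop-heat kernel}(i) applies to both $p_t^{\nu+1}$ and $p_t^{\nu+2}$, and the Bessel identity \eqref{eq5-Inu} still furnishes $|p_t^{\nu+1}-p_t^{\nu+2}| \lesi \frac{1-r}{r^{1/2}xy}\, p_t^{\nu+2}$ needed for $A_2$. Running the standard sub-case split ($t \le 1$ vs.\ $t \ge 1$, each with $xy$ small or large) and tracking the same manipulations--including the re-writing $\frac{1+r}{1-r} x p_t^{\nu+1} - \frac{2r^{1/2}}{1-r} x p_t^{\nu+2} = \frac{1-\sqrt r}{1+\sqrt r} x p_t^{\nu+1} + \frac{2\sqrt r}{1-r} x (p_t^{\nu+1}-p_t^{\nu+2})$ used there--each of $A_1, A_2, A_3$ will be dominated by $\frac{1}{t}\exp(-|x-y|^2/ct)(1+\sqrt t/x)^{\gamma_\nu}$.

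The essential new piece is $A_4 = -\frac{2(\nu+1)}{x}\,p_t^{\nu+1}$; this is where the factor $(1+\sqrt t/x)^{\gamma_\nu}$ in the conclusion genuinely originates. By Proposition \ref{prop-heat kernel}(i),
\[
\frac{p_t^{\nu+1}(x,y)}{x} \lesi \frac{1}{x\sqrt t}\exp\Big(-\tfrac{|x-y|^2}{ct}\Big)\Big(1 + \tfrac{\sqrt t}{\rho(x)} + \tfrac{\sqrt t}{\rho(y)}\Big)^{-(\nu+3/2)}.
\]
When $x \ge \sqrt t$, the prefactor $\frac{1}{x\sqrt t} \le \frac{1}{t}$ already suffices. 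When $x < \sqrt t$ with $x \le 1$, so $\rho(x) = x$ and $1 + \sqrt t/x \sim \sqrt t/x$, the algebraic identity $\gamma_\nu = -(\nu+1/2)$ converts $\frac{\sqrt t}{x}(\sqrt t/x)^{-(\nu+3/2)}$ into exactly $(\sqrt t/x)^{\gamma_\nu}$; the edge case $1 < x < \sqrt t$ is handled by using $\rho(x) = 1/x$ together with $\nu+3/2 > 0$ to absorb the $\frac{1}{x}$ into the decay factor. The main obstacle is the bookkeeping of sub-cases for the four terms; the analytic input is already packaged in Proposition \ref{prop-heat kernel}(i) and the Bessel identity \eqref{eq5-Inu}, and the key algebraic identification $\gamma_\nu = -(\nu+1/2)$ is what makes $A_4$ deliver precisely the factor demanded by the statement.
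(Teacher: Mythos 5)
Your decomposition is, after expanding, exactly the paper's: the paper writes $\delta^*=-\widetilde\delta+2x-\tfrac{2(\nu+1)}{x}$ with $\widetilde\delta$ the $\delta$-operator at order $\nu+1$ and quotes the already-proved $\delta$-estimate for $\widetilde\delta p_t^{\nu+1}$, while you re-derive that piece via the chain rule \eqref{eq- chain rule}; your $A_1+A_2+A_3$ is precisely $-\widetilde\delta p_t^{\nu+1}+2xp_t^{\nu+1}$ regrouped, and your treatment of $A_4=-\tfrac{2(\nu+1)}{x}p_t^{\nu+1}$ via \eqref{eq-ptxy nu > -1/2} and $\gamma_\nu=-(\nu+1/2)$ is the same as the paper's handling of $\tfrac1x p_t^{\nu+1}$. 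One caveat: your claim that Proposition \ref{prop-heat kernel}(i) plus \eqref{eq5-Inu} package all the analytic input is slightly too optimistic --- for $A_1=xp_t^{\nu+1}$ in the diagonal regime $x\sim y$ with $xy\gtrsim t$ (resp.\ $xy\gtrsim e^{2t}$), the bound \eqref{eq-ptxy nu > -1/2} only yields $x^{\gamma_\nu}t^{-(\nu+5/2)/2}$, which is not $\lesssim t^{-1}$ for large $x$; there you must invoke the sharper bounds with the extra factor $e^{-c\,txy}$ (resp.\ $e^{-c\,xy}$) coming from \eqref{eq2-ptxy}, i.e.\ the first inequalities in \eqref{ptxy subcase 1.2} and \eqref{ptxy subcase 2.2}, exactly as the paper does in the sub-case $x/2\le y\le 2x$ of Proposition \ref{prop- delta k pt} and in its ``careful examination'' step; likewise, keeping the $e^{-t/2}$ prefactor (or the full factor $(1+\sqrt t/x)^{\gamma_\nu}$ on the right-hand side) is what closes the regime $1<x<\sqrt t$, $t>1$ in your $A_4$ estimate. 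With these points made explicit your plan goes through and coincides with the paper's proof.
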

\begin{proof}
	The estimate corresponding to $\nu\ge -1/2$ is a direct consequence of \cite[Propositions 3.2 \& 3.8 ]{B}. It remains to prove the estimate when $-1<\nu<-1/2$. To do this, denote 
	\[
	\widetilde{ \delta} =  \partial_x  + x-\frac{1}{x}\Big(\nu+1 + \f{1}{2}\Big).
	\]
	Then we have
	\[
	\begin{aligned}
		\delta^* &=: -\widetilde{ \delta} +2x - \f{1}{x}\Big(2\nu+1 + 1\Big).
	\end{aligned}
	\]
	Hence,
	\[
	\begin{aligned}
		|\delta^*p^{\nu+1}_t(x,y)|&\lesi |\widetilde{\delta} p^{\nu+1}_t(x,y)| + x p^{\nu+1}_t(x,y) + \f{1}{x}p^{\nu+1}_t(x,y).
	\end{aligned}
	\]

	By Proposition \ref{prop- delta k pt} in \cite{B},
	\[
	\begin{aligned}
		|\widetilde{\delta} p^{\nu+1}_t(x,y)|&\lesi \f{1}{t}\exp\Big(-\f{|x-y|^2}{ct}\Big)\\
		&\lesi \f{1}{t}\exp\Big(-\f{|x-y|^2}{ct}\Big)\txg.
	\end{aligned}
	\]
	We now show that 
	\[
	xp_t^{\nu+1}(x,y)\lesi \f{1}{t}\exp\Big(-\f{|x-y|^2}{ct}\Big)\txg.
	\]
	If $x>2y$ or $x< y/2$, then $|x-y|\gtrsim x$. This, together with Proposition \ref{prop-heat kernel} (i), implies
	\[
	\begin{aligned}
		xp_t^{\nu+1}(x,y)&\lesi \f{x}{\sqrt t}\exp\Big(-\f{|x-y|^2}{ct}\Big)\\
		&\lesi \f{x}{\sqrt t}\f{\sqrt t}{|x-y|}\exp\Big(-\f{|x-y|^2}{2ct}\Big)\\
		&\lesi \f{1}{t} \exp\Big(-\f{|x-y|^2}{2ct}\Big)\\
		&\lesi \f{1}{t} \exp\Big(-\f{|x-y|^2}{2ct}\Big)\txg.
	\end{aligned}
	\]
	If $y/2\le x\le 2y$, by a careful examination the proof of Proposition \ref{prop- delta k pt}  we have
	\[
	\begin{aligned}
		xp_t^{\nu+1}(x,y) &\lesi\f{1}{t} \exp\Big(-\f{|x-y|^2}{2ct}\Big)\tyg\\
		&\sim \f{1}{t} \exp\Big(-\f{|x-y|^2}{2ct}\Big)\txg.
	\end{aligned}
	\]
We have proved that 
\[
		xp_t^{\nu+1}(x,y)\lesi  \f{1}{t} \exp\Big(-\f{|x-y|^2}{2ct}\Big)\txg.
\]
	It remains to show that 
	\[
	\f{1}{x}p_t^{\nu+1}(x,y)\lesi \f{1}{t}\exp\Big(-\f{|x-y|^2}{ct}\Big)\tyg.
	\]
	To do this, we consider two cases.
	
			\noindent\textbf{Case 1: $t \in (0,1)$.}  If $x\ge \sqrt t$, then by Proposition \ref{prop-heat kernel},
	\[
		\begin{aligned}
		\f{1}{x}p_t^{\nu+1}(x,y)&\lesi \f{1}{\sqrt t }p_t^{\nu+1}(x,y) \\
		&\lesi \f{1}{t}\exp\Big(-\f{|x-y|^2}{ct}\Big) \\
		&\lesi \f{1}{t}\exp\Big(-\f{|x-y|^2}{ct}\Big)\txg.
	\end{aligned}	
	\]
	If $x<\sqrt t \le 1$, then apply \eqref{eq-ptxy nu > -1/2} with the consideration  that $\rho(x) \sim x$,
	\begin{equation*}
		\begin{aligned}
			\f{1}{x}p_t^{\nu+1}(x,y)&\lesi \f{1}{x\sqrt t}\exp\Big(-\f{|x-y|^2}{ct}\Big)\Big(\f{x}{\sqrt t}\Big)^{\nu+1+1/2}\\
			&\lesi \f{1}{t}\exp\Big(-\f{|x-y|^2}{ct}\Big)\Big(\f{x}{\sqrt t}\Big)^{\nu+1/2}\\
			&\lesi \f{1}{t}\exp\Big(-\f{|x-y|^2}{ct}\Big)\txg.
		\end{aligned}
	\end{equation*}

\bigskip

\noindent\textbf{Case 2: $t> 1$.} If $x\ge 1$, then by \eqref{eq-ptxy nu > -1/2},
\[
\begin{aligned}
	\f{1}{x}p_t^{\nu+1}(x,y)&\lesi p_t^{\nu+1}(x,y)\\ 
	&\lesi \f{e^{-t/2}}{\sqrt t}\exp\Big(-\f{|x-y|^2}{ct}\Big)\\
	&\lesi \f{1}{t}\exp\Big(-\f{|x-y|^2}{ct}\Big)\txg.	
\end{aligned}
\]
If $x< 1$, then apply \eqref{eq-ptxy nu > -1/2} with the consideration  that $\rho(x) \sim x$,
\begin{equation*}
	\begin{aligned}
		\f{1}{x}p_t^{\nu+1}(x,y)&\lesi \f{1}{x\sqrt t}\exp\Big(-\f{|x-y|^2}{ct}\Big)\Big(\f{x}{\sqrt t}\Big)^{\nu+1+1/2}\\
		&\lesi \f{1}{t}\exp\Big(-\f{|x-y|^2}{ct}\Big)\Big(\f{x}{\sqrt t}\Big)^{\nu+1/2}\\
		&\sim \f{1}{t}\exp\Big(-\f{|x-y|^2}{ct}\Big)\txg.
	\end{aligned}
\end{equation*}
	This completes our proof.
\end{proof}

The following result concerns estimates of the time derivative of the heat kernel $p_t^\nu(x,y)$.
\begin{prop}\label{prop- time derivative of heat kernel}
		Let $ \nu>-1$. Then we have
	\[
	| \partial_tp^{\nu}_t(x,y)|\lesi \f{1}{t^{3/2}}\exp\Big(-\f{|x-y|^2}{ct}\Big)\Big(1+\f{\sqrt t}{x} \Big)^{\gamma_\nu}\Big(1+\f{\sqrt t}{y} \Big)^{\gamma_\nu}
	\]
	for all $t>0$ and $x,y\in (0,\vc)$.
\end{prop}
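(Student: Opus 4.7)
The plan is to exploit the heat equation $\partial_t p_t^\nu = -\mathcal L_\nu p_t^\nu$ together with the 1D decomposition $\mathcal L_\nu = \delta^*\delta + 2(\nu+1)$ (a special case of $\sum_i \delta_i^*\delta_i = L_\nu - 2(|\nu|+n)$ stated in the introduction). Combining this with the semigroup factorisation
\[
p_t^\nu(x,y)=\int_0^\infty p_{t/2}^\nu(x,z)\, p_{t/2}^\nu(z,y)\, dz
\]
and integrating by parts in $z$, one obtains the key identity
\[
\partial_t p_t^\nu(x,y) = -\int_0^\infty \delta_z p_{t/2}^\nu(x,z)\, \delta_z p_{t/2}^\nu(z,y)\, dz \;-\; 2(\nu+1)\, p_t^\nu(x,y).
\]
Once this identity is in hand, each term on the right will be estimated separately.

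For the integral term, I would apply Proposition \ref{prop- delta k pt} to each factor, using the symmetry $p_{t/2}^\nu(x,z)=p_{t/2}^\nu(z,x)$ so that $\delta_z$ always acts in the \emph{first} variable of each kernel; this delivers
\[
|\delta_z p_{t/2}^\nu(x,z)|\lesi \f{1}{t}\exp\Big(-\f{|x-z|^2}{ct}\Big)\txg,\qquad |\delta_z p_{t/2}^\nu(z,y)|\lesi \f{1}{t}\exp\Big(-\f{|z-y|^2}{ct}\Big)\tyg.
\]
Pulling the $x$- and $y$-weights outside the integral and invoking the standard Gaussian convolution estimate
\[
\int_0^\infty \exp\Big(-\f{|x-z|^2}{ct}\Big)\exp\Big(-\f{|z-y|^2}{ct}\Big)\, dz \lesi \sqrt t\,\exp\Big(-\f{|x-y|^2}{c' t}\Big)
\]
then produces exactly the desired $\f{1}{t^{3/2}}\exp(-|x-y|^2/(ct))\txg\tyg$ bound.

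The remaining term $2(\nu+1)p_t^\nu(x,y)$ is controlled directly by Proposition \ref{prop-heat kernel}, noting that the factor $\tfrac{e^{-t/2}}{\sqrt t}$ appearing there is dominated by $\tfrac{1}{t^{3/2}}$ uniformly in $t>0$: for $t\le 1$ one uses $t^{3/2}\le \sqrt t$, while for $t\ge 1$ the exponential decay $e^{-t/2}\lesi t^{-1}$ takes over.

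The principal technical step is justifying the integration by parts that produces the cross-derivative formula. One must verify that the boundary contributions at $z=0^+$ and $z=+\infty$ vanish, which will follow from the Gaussian decay of $p_{t/2}^\nu$ and $\delta p_{t/2}^\nu$ at infinity together with the $z^{\nu+1/2}$-type vanishing at $z=0^+$ visible from \eqref{eq1-ptxy} and the Bessel asymptotic \eqref{eq1-Inu}. A secondary subtlety is bookkeeping: Proposition \ref{prop- delta k pt} places the $(1+\sqrt t/\cdot)^{\gamma_\nu}$ weight in the variable to which $\delta$ is \emph{not} applied, and the kernel symmetry is needed so that these weights land on $x$ and $y$ rather than on the dummy variable $z$.
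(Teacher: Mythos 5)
Your argument is correct, but it follows a genuinely different route from the paper. The paper works directly with the explicit kernel: differentiating \eqref{eq- form ptxy for derivative} in $t$ gives the identity \eqref{eq-time derivative of ptnu}, and the bound is then obtained by the same case analysis as in Propositions \ref{prop-heat kernel} and \ref{prop- delta k pt} ($t\le 1$ versus $t\ge 1$, $xy$ small versus large), with the crucial cancellation between $p_t^{\nu+1}$ and $p_t^{\nu}$ handled through the Bessel inequality \eqref{eq5-Inu}. You instead use the operator identity $L_\nu=\delta^*\delta+2(\nu+1)$ and the semigroup law to write $\partial_t p_t^\nu(x,y)=-\int_0^\vc \delta_z p_{t/2}^\nu(x,z)\,\delta_z p_{t/2}^\nu(z,y)\,dz-2(\nu+1)p_t^\nu(x,y)$, which reduces everything to the already-proved first-order bound of Proposition \ref{prop- delta k pt} (applied twice, with the kernel symmetry placing the weights on $x$ and $y$), the heat kernel bound of Proposition \ref{prop-heat kernel}, and a standard Gaussian convolution estimate; there is no circularity, since Proposition \ref{prop- delta k pt} does not depend on the time-derivative estimate. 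Your route is shorter and reuses existing estimates, at the price of two justifications you should write out: (i) the integration by parts in $z$, where the boundary term is $p_{t/2}^\nu(x,z)\,\delta_z p_{t/2}^\nu(z,y)$ evaluated at $z\to 0^+$ and $z\to\vc$; by \eqref{eq1-ptxy}, \eqref{eq1-Inu} and \eqref{eq- chain rule 2} this behaves like $z^{\nu+1/2}\cdot z^{\nu+3/2}=z^{2\nu+2}\to 0$ at the origin (here $\nu>-1$ is exactly what is needed) and decays Gaussianly at infinity; and (ii) the differentiation under the integral sign in the semigroup identity, which is routine given the kernel bounds but should be mentioned. The paper's direct computation is longer but self-contained, and its intermediate decomposition (the terms $E_1,\dots,E_4$ built on \eqref{eq-time derivative of ptnu}) mirrors the structure used elsewhere in Section 3; your argument, by contrast, generalizes immediately to higher time derivatives in the spirit of Proposition \ref{prop- partial k heat kernel}.
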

\begin{proof}
	If $\nu\ge -1/2$, then Proposition \ref{prop-heat kernel} yields
	\[
	| p^{\nu}_t(x,y)|\lesi \f{1}{\sqrt t}\exp\Big(-\f{|x-y|^2}{ct}\Big).
	\]
	Applying Lemma 2.5 in \cite{CD}, we have
	\[
	\begin{aligned}
		| \partial_tp^{\nu}_t(x,y)|&\lesi \f{1}{t^{3/2}}\exp\Big(-\f{|x-y|^2}{ct}\Big),
	\end{aligned}
	\]
	as desired since $\gamma_\nu = 0$.
	
	We now take care of the case $-1<\nu<-1/2$. From \eqref{eq- form ptxy for derivative}, by the product rule and the chain rule, we have
	\[
	\begin{aligned}
		\partial_t p^\nu_t(x,y)&= (\nu+1)\partial_t\Big[\f{r^{1/2}}{1-r}\Big]\times \Big[\f{r^{1/2}}{1-r}\Big]^{-1}p^\nu_t(x,y) -\f{1}{2}\partial_t\Big[\f{1+r}{1-r}\Big](x^2+y^2) p^\nu_t(x,y)\\
		&+2\partial_t\Big[\f{r^{1/2}}{1-r}\Big]xyp_t^{\nu+1}(x,y),
	\end{aligned}
	\]
	where $r=e^{-4t}$.
	
	By a simple calculation, we come up with
	\begin{equation}\label{eq-time derivative of ptnu}
	\begin{aligned}
		\partial_t p^\nu_t(x,y)&= -\f{2(\nu+1)(1+r)}{1-r}p^\nu_t(x,y) +\f{4r}{(1-r)^2}(x^2+y^2) p^\nu_t(x,y)-\f{4r^{1/2}(1+r)}{(1-r)^2}xyp_t^{\nu+1}(x,y).
	\end{aligned}
	\end{equation}
	We now consider two cases.
	
	\bigskip
	
	\textbf{Case 1: $t\le 1$.} We have two subcases.
	
	\textbf{Subcase 1.1: $xy<t$.} From the kernel estimate \eqref{eq-est1-proof Prop1} and the facts $1-r\sim t$ and $r\sim 1$,  we have
\[
\partial_t p_t^\nu(x,y)\lesi \f{1}{ t}\exp\Big(-\f{|x-y|}{ct}\Big)\Big(\f{t}{xy}\Big)^{\gamma_\nu}.
\]

\textbf{Subcase 1.2: $xy\ge t$.} We rewrite
	\[
\begin{aligned}
	\partial_t p^\nu_t(x,y)&= -\f{2(\nu+1)(1+r)}{1-r}p^\nu_t(x,y) +\f{4r}{(1-r)^2}(x-y)^2 p^\nu_t(x,y)\\
	& \ \ -\Big[\f{4r^{1/2}(1+r)}{(1-r)^2}-\f{8r}{(1-r)^2}\Big]xyp_t^{\nu}(x,y)-\f{4r^{1/2}(1+r)}{(1-r)^2}xy[p_t^{\nu+1}(x,y)-p_t^{\nu}(x,y)]\\
	&= -\f{2(\nu+1)(1+r)}{1-r}p^\nu_t(x,y) +\f{4r}{(1-r)^2}(x-y)^2 p^\nu_t(x,y)\\
	& \ \ -\f{4r^{1/2} (1-r^{1/2})^2}{(1-r)^2} xyp_t^{\nu}(x,y) -\f{4r^{1/2}(1+r)}{(1-r)^2}xy[p_t^{\nu+1}(x,y)-p_t^{\nu}(x,y)]\\
	%&= -\f{2(\nu+1)(1+r)}{1-r}p^\nu_t(x,y) +\f{4r}{(1-r)^2}(x-y)^2 p^\nu_t(x,y)\\
	%& \ \ -\f{4r^{1/2} (1-r^{1/2})^2}{(1-r)^2} xyp_t^{\nu}(x,y)-\f{4r^{1/2}(1+r)}{(1-r)^2}xy[p_t^{\nu+1}(x,y)-p_t^{\nu}(x,y)]\\
	&= -\f{2(\nu+1)(1+r)}{1-r}p^\nu_t(x,y) +\f{4r}{(1-r)^2}|x-y|^2 p^\nu_t(x,y)\\
	& \ \ -\f{4r^{1/2}}{(1+r^{1/2})^2} xyp_t^{\nu}(x,y)-\f{4r^{1/2}(1+r)}{(1-r)^2}xy[p_t^{\nu+1}(x,y)-p_t^{\nu}(x,y)]\\
	&=E_1 + E_2 + E_3 + E_4.
\end{aligned}
\]
Since in this situation $r\sim 1$ and $1-r\sim t$, we have
	\begin{equation*}
	\begin{aligned}
		|E_1| +|E_2| + |E_3|&\lesi \f{1}{t}p_t^\nu(x,y)+\f{|x-y|^2}{t^2}p_t^\nu(x,y) + xyp_t^\nu(x,y).
	\end{aligned}
\end{equation*}
This, together with the upper bound of $p_t^\nu(x,y)$ in \eqref{eq-est1-proof Prop1}, further yields
	\begin{equation*}
	\begin{aligned}
		|E_1| +|E_2| + |E_3|&\lesi  \f{1}{t^{3/2}}\exp\Big(-\f{|x-y|^2}{ct}\Big)\txg\tyg.
	\end{aligned}
\end{equation*}

For the last term $E_4$, using the facts $1-r \sim 
t$ and $r\sim 1$, \eqref{eq5-Inu} and Proposition \ref{prop-heat kernel} to obtain
\[
\begin{aligned}
	|E_4|&\lesi \f{xy}{t^2}|p_t^{\nu+1}(x,y)-p_t^{\nu+1}(x,y)|\\
	&\sim \f{xy}{t^2} \f{1-r}{2\sqrt r xy}p_t^{\nu+1}(x,y)\\
	&\sim \f{1}{t}p_t^{\nu}(x,y)\\
	&\lesi \f{1}{t^{3/2}}\exp\Big(-\f{|x-y|^2}{ct}\Big)\txg\tyg.
\end{aligned}
\]
This complete our proof for the case $t\in (0,1)$.

\noindent\textbf{Case 2: $t\ge 1$.}	In this case, $1+r\sim 1-r\sim 1$ and $r\le 1$. Hence, from \eqref{eq-time derivative of ptnu}, we have
\[
\begin{aligned}
	|\partial_t p_t^\nu(x,y)|&\lesi  p_t^\nu(x,y) +  (x^2+y^2)p_t^\nu(x,y) +xyp_t^{\nu+1}(x,y)\\
	&\lesi  p_t^\nu(x,y) +  (x^2+y^2)p_t^\nu(x,y) +xyp_t^{\nu}(x,y)\\
	&\lesi   p_t^\nu(x,y) +  (x^2+y^2)p_t^\nu(x,y),
\end{aligned}
\]
where in the last inequality we used $x^2+ y^2 \ge 2xy$.

\textbf{Subcase 2.1: $xy<e^{2t}$.} By \eqref{eq1-ptxy} and \eqref{eq1-Inu}    we have
\begin{equation*}
	\begin{aligned}
		p_t^\nu(x,y)&\lesi \f{1}{e^{t}}\exp\Big(-c(x^2+y^2)\Big)\Big(\f{1}{e^{-2t}xy}\Big)^{\gamma_\nu}\\
		&\lesi \f{1}{e^{t(1-2\gamma_\nu)}}\exp\Big(-c(x^2+y^2)\Big)\Big(\f{1}{xy}\Big)^{\gamma_\nu}.
	\end{aligned}
\end{equation*}
which implies
\[
p_t^\nu(x,y) +  (x^2+y^2)p_t^\nu(x,y)\lesi \f{1}{t^{3/2}}\exp\Big(-\f{|x-y|^2}{ct}\Big)\txg\tyg.
\]
as desired.

\medskip

\textbf{Subcase 2.2: $xy\ge e^{2t}$.}  By \eqref{eq2-ptxy}, \eqref{eq2-Inu} and \eqref{eq3-Inu}  we have
$$		p_t^\nu(x,y)\lesi \f{1}{e^t}\exp\Big(-c|x-y|^2\Big)\exp\Big(-c'xy\Big).
$$
If $y  < x/2$ or $y>2x$, then $|x-y|\sim \max\{x,y\}$. Hence, 
\[
\begin{aligned}
p_t^\nu(x,y) +  (x^2+y^2)p_t^\nu(x,y)\\
&\lesi  \f{1}{e^t}\exp\Big(-c|x-y|^2\Big)+(x^2+y^2)\f{t}{|x-y|^2}\f{1}{e^t}\exp\Big(-c|x-y|^2/2\Big)\\
&\lesi \f{1}{t^{3/2}}\exp\Big(-\f{|x-y|^2}{ct}\Big)\\&\lesi \f{1}{t^{3/2}}\exp\Big(-\f{|x-y|^2}{ct}\Big)\txg\tyg.	
\end{aligned}
\]

If $x/2\le y\le 2x$, then $x\sim y \gtrsim \sqrt t$. Hence, 
\begin{equation*}
	%\label{ptxy subcase 2.2 s}
	\begin{aligned}
		 p_t^\nu(x,y)  +  (x^2+y^2)p_t^\nu(x,y)&\sim p_t^\nu(x,y)  +  x^2p_t^\nu(x,y)\\
		&\lesi  \f{1}{e^t}\exp\Big(-c|x-y|^2\Big) + \f{x^2}{e^t}\exp\Big(-c|x-y|^2\Big)\f{1}{xy}\\
		&\lesi  \f{1}{e^t}\exp\Big(-c|x-y|^2\Big)\\
		&\lesi \f{1}{t^{3/2}}\exp\Big(-\f{|x-y|^2}{ct}\Big)\\&\lesi \f{1}{t^{3/2}}\exp\Big(-\f{|x-y|^2}{ct}\Big)\txg\tyg.	
	\end{aligned}
\end{equation*}

This completes our proof.
\end{proof}

The following result concerns estimates of the mixed derivative of the heat kernel $p_t^\nu(x,y)$.
\begin{prop}\label{prop- partial k heat kernel}
Let $\nu>-1$ and $a\ge 0$. For $k\in \mathbb N$, then we have
\begin{equation}\label{eq-partial k heat kernel 1}
| \partial^k_tp^{\nu}_t(x,y)|\lesi \f{1}{t^{k+1/2}}\exp\Big(-\f{|x-y|^2}{ct}\Big)\Big(1+\f{\sqrt t}{x} \Big)^{\gamma_\nu}\Big(1+\f{\sqrt t}{y} \Big)^{\gamma_\nu},
\end{equation}
\begin{equation}\label{eq-partial k heat kernel 2}
| \delta\partial^k_tp^{\nu}_t(x,y)|\lesi \f{1}{t^{k+1}}\exp\Big(-\f{|x-y|^2}{ct}\Big)\Big(1+\f{\sqrt t}{x} \Big)^{\gamma_\nu}\Big(1+\f{\sqrt t}{y} \Big)^{\gamma_\nu},
\end{equation}
and
\begin{equation}\label{eq-partial k heat kernel 3}
| \delta^*\partial^k_t[e^{-ta}p^{\nu+1}_t(x,y)]|\lesi \f{1}{t^{k+1}}\exp\Big(-\f{|x-y|^2}{ct}\Big)\Big(1+\f{\sqrt t}{x} \Big)^{\gamma_\nu}
\end{equation}
for all $t>0$ and $x,y\in (0,\vc)$.
\end{prop}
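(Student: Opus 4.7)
The plan is to deduce all three bounds from the $k=0$ estimates of Propositions~\ref{prop-heat kernel}, \ref{prop- delta k pt}, and \ref{prop-delta dual heat kernel} via Cauchy's integral formula, exploiting the analyticity of $e^{-z\mathcal L_\nu}$ in the right half-plane. The first step is to extend those bounds from real $t>0$ to complex $z$ on the contour $\Gamma_t := \{z \in \CC : |z - t| = t/2\}$. Since $\mathrm{Re}(z) \sim t$ and $|z| \sim t$ on $\Gamma_t$, the key quantities $|1-r|$, $|1 \pm r^{1/2}|$, and $\mathrm{Re}((1+r)/(1-r)) = (1-|r|^2)/|1-r|^2$ (with $r=e^{-4z}$) remain comparable to their real-time analogues, and the Bessel function bounds \eqref{eq1-Inu}--\eqref{eq5-Inu} extend to complex argument. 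A verbatim re-run of the case analyses in Propositions~\ref{prop-heat kernel}--\ref{prop-delta dual heat kernel} then yields
\begin{equation*}
|p_z^\nu(x,y)| \lesi \f{1}{\sqrt t} e^{-|x-y|^2/ct}\txg\tyg, \quad |\delta p_z^\nu(x,y)| \lesi \f{1}{t} e^{-|x-y|^2/ct}\tyg,
\end{equation*}
\begin{equation*}
|\delta^* p_z^{\nu+1}(x,y)| \lesi \f{1}{t} e^{-|x-y|^2/ct}\txg,
\end{equation*}
uniformly for $z \in \Gamma_t$.

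With these complex-time bounds in hand, Cauchy's formula
\[\partial_t^k p_t^\nu(x,y) = \f{k!}{2\pi i}\oint_{\Gamma_t} \f{p_z^\nu(x,y)}{(z-t)^{k+1}}\,dz\]
combined with $|z-t|=t/2$ yields $|\partial_t^k p_t^\nu(x,y)| \lesi t^{-k}\sup_{z \in \Gamma_t}|p_z^\nu(x,y)|$, which is \eqref{eq-partial k heat kernel 1}. Applying the analogous Cauchy representation to $z \mapsto \delta p_z^\nu(x,y)$ gives \eqref{eq-partial k heat kernel 2}, and applying it to $z \mapsto \delta^* p_z^{\nu+1}(x,y)$ gives \eqref{eq-partial k heat kernel 3} in the case $a=0$. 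For general $a \geq 0$, Leibniz's rule gives
\[\delta^* \partial_t^k[e^{-ta} p_t^{\nu+1}(x,y)] = \sum_{j=0}^k \binom{k}{j}(-a)^{k-j} e^{-ta}\,\delta^* \partial_t^j p_t^{\nu+1}(x,y),\]
and summing the $a=0$ bounds produces the prefactor $e^{-ta}(1+at)^k/t^{k+1}$, which is $\lesi t^{-(k+1)}$ since $e^{-x}(1+x)^k$ is bounded on $[0,\infty)$.

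The principal obstacle is the complex-time extension of the kernel bounds in the first step, which requires re-running the intricate case analyses of Propositions~\ref{prop-heat kernel}--\ref{prop-delta dual heat kernel} with complex $r = e^{-4z}$. This is conceptually routine---analyticity together with comparability of the key quantities on $\Gamma_t$ carries every real-time step over up to a harmless constant---but the bookkeeping, especially in the borderline regime $xy \sim t$ where the small- and large-argument Bessel asymptotics meet, is somewhat lengthy.
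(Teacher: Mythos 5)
Your route is genuinely different from the paper's. The paper never goes to complex time in the range $-1<\nu<-1/2$: it first proves the $k=1$ bound by differentiating the explicit Mehler-type formula (Proposition \ref{prop- time derivative of heat kernel}), and then gets $k\ge 2$ from the semigroup identity $\partial_t^k e^{-t\mathcal L_\nu}=k^k\big(\partial_t e^{-\frac tk\mathcal L_\nu}\big)^{k}$ together with the composition inequality \eqref{eq-product H less than H} for the majorant kernels $H_{t,\gamma_\nu,c}$ (this is exactly where $\gamma_\nu<1/2$ is used); \eqref{eq-partial k heat kernel 2} and \eqref{eq-partial k heat kernel 3} follow by factoring out one $\delta e^{-\frac{t}{k+1}\mathcal L_\nu}$, resp.\ one $\delta^*$-factor, and invoking only the already established real-time estimates of Propositions \ref{prop-heat kernel}, \ref{prop- delta k pt}, \ref{prop-delta dual heat kernel}. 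Your Cauchy-formula argument is the Coulhon--Duong idea that the paper invokes (via \cite{CD}) only in the Gaussian regime $\nu\ge -1/2$, and in principle it can be pushed through here as well; your reduction of the factor $e^{-ta}$ by Leibniz and $e^{-x}(1+x)^k\le C_k$ is fine.

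The gap is that the whole content of the proposition is loaded onto the unproved claim that Propositions \ref{prop-heat kernel}--\ref{prop-delta dual heat kernel} extend ``verbatim'' to $z\in\Gamma_t$, and that claim is not verbatim true. First, the $\delta$-estimate of Proposition \ref{prop- delta k pt} relies on \eqref{eq5-Inu}, which is deduced from N\"asell's positivity and monotonicity bounds ($0<I_{\alpha+1}(z)-I_{\alpha+2}(z)<2(\alpha+2)I_{\alpha+2}(z)/z<\dots$) valid only for positive real argument \cite{Na}; for complex $w$ these inequalities do not even make sense, and one needs a replacement (e.g.\ cancellation of the common leading term in the large-argument expansions of $I_\alpha$ and $I_{\alpha+1}$ in a sector, combined with the recurrence $I_\alpha(w)-I_{\alpha+2}(w)=\frac{2(\alpha+1)}{w}I_{\alpha+1}(w)$ and the power series for $|w|\le 1$). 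Second, for $t\ge 1$ the quantity $r^{1/2}=e^{-2z}$ has argument $-2\,\mathrm{Im}\,z$, which is unbounded on your contour, so the sector asymptotics \eqref{eq2-Inu}--\eqref{eq3-Inu} cannot simply be cited for $w=\frac{2r^{1/2}}{1-r}xy$; a separate argument is needed there (the crude bound $|I_\nu(w)|\le I_\nu(|w|)$ saves you because $|r|^{1/2}\le e^{-t}$, but this must be said). Third, even for $t\le 1$ the radius $t/2$ lets $\arg w$ get close to $\pi/2$ ($\approx \pi/6+2|\mathrm{Im}\,z|$), so you should either shrink the contour or verify that the bound $|I_\nu(w)|\lesi e^{\mathrm{Re}\,w}|w|^{-1/2}$ holds uniformly up to $\arg w=\pm\pi/2$. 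None of these obstacles is fatal, but as written the decisive step of your proof is missing, whereas the paper's argument shows the proposition follows from the real-time estimates already in hand plus the elementary bound \eqref{eq-product H less than H}, with no complex-variable input at all.
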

\begin{proof}
	We only provide the proof for $-1<\nu<-1/2$ since the proof corresponding to $\nu\ge -1/2$ is similar.
	
	For $a\in (0,1/2)$ and $c>0$, define
	\[
	H_{t,a,c}(x,y) = \f{1}{\sqrt t}\exp\Big(-\f{|x-y|^2}{ct}\Big)\Big(1+\f{\sqrt t}{x} \Big)^{\gamma_\nu}\Big(1+\f{\sqrt t}{y} \Big)^{\gamma_\nu}
	\]
	for $t>0$ and $x,y>0$.
	
	Then will show that 
	\begin{equation}\label{eq-product H less than H}
	\int_0^\vc  H_{t,a,c}(x,z)H_{t,a,c}(z,y)dz\lesi  H_{t,a,4c}(x,y)
	\end{equation}
	for $t>0$ and $x,y>0$.
	
	Indeed, we have
	\[
	\begin{aligned}
		\int_0^\vc  H_{t,a,c}(x,z)&H_{t,a,c}(z,y)dz\\
		=& \f{1}{t}\txg\tyg\\
		&\times \int_{(0,\vc}\exp\Big(-\f{|x-z|^2}{ct}\Big)\Big(1+\f{\sqrt t}{z}\Big)^{a}\exp\Big(-\f{|z-y|^2}{ct}\Big)\Big(1+\f{\sqrt t}{z}\Big)^{a} dz.
	\end{aligned}
	\]
	Using the following inequality
	\[
	|x-z|^2 + |z-y|^2\ge |x-y|^2/2,
	\]
	we have,
	\[
	\begin{aligned}
		\int_0^\vc  H_{t,a,c}(x,z)&H_{t,a,c}(z,y)dz\\
		\lesi&\f{1}{t}\exp\Big(-\f{|x-y|^2}{4ct}\Big)\txg\tyg\\
		&\times \int_{(0,\vc}\exp\Big(-\f{|x-z|^2}{2ct}\Big)\Big(1+\f{\sqrt t}{z}\Big)^{a}\exp\Big(-\f{|z-y|^2}{2ct}\Big)\Big(1+\f{\sqrt t}{z}\Big)^{a} dz\\
		&\lesi \f{1}{\sqrt t}H_{t,a,4c}(x,y) \Big[\int_0^\vc \exp\Big(-\f{|x-z|^2}{2ct}\Big)\Big(1+\f{\sqrt t}{z}\Big)^{2a}\Big]^{1/2}\\
		& \ \ \ \times \Big[\int_0^\vc \exp\Big(-\f{|y-z|^2}{2ct}\Big)\Big(1+\f{\sqrt t}{z}\Big)^{2a}\Big]^{1/2},
	\end{aligned}
	\]
	where in the last inequality we used the H\"older's inequality.
	
	On the other hand, by Lemma \ref{lem-elementary} we have
	\[
	\begin{aligned}
		\Big[\int_0^\vc \exp\Big(-\f{|x-z|^2}{ct}\Big)\Big(1+\f{\sqrt t}{z}\Big)^{2a}\Big]^{1/2}&\lesi \Big[\int_{0}^{\sqrt t} \Big(1+\f{\sqrt t}{z}\Big)^{2a}dz\Big]^{1/2}+\Big[\int_{0}^\vc \exp\Big(-\f{|x-z|^2}{ct}\Big)dz\Big]^{1/2}\\
		&\lesi t^{1/4},
	\end{aligned}
	\]
	as long as $a\in (0,1/2)$.
	
	By Lemma \ref{lem-elementary} again,
	\[
	\Big[\int_0^\vc \exp\Big(-\f{|x-z|^2}{ct}\Big)\Big(1+\f{\sqrt t}{z}\Big)^{2a}\Big]^{1/2}\lesi t^{1/4}.
	\]
	Consequently,
	\[
	\int_0^\vc  H_{t,a,c}(x,z)H_{t,a,c}(z,y)dz\lesi H_{t,a,4c}(x,y),
	\]
	which proves \eqref{eq-product H less than H}.
	
	We now turn to prove \eqref{eq-partial k heat kernel 1}. To do this, we write
	\[
	\begin{aligned}
		\partial_t^k e^{-t\mathcal L_\nu}=(-1)^k \mathcal L_\nu^ke^{-t\mathcal L_\nu}= k^k \big[-\tfrac{1}{k}\mathcal L_\nu e^{-\f{t}{k}\mathcal L_\nu}]^k=k^k \underbrace{\partial_t e^{-\f{t}{k}\mathcal L_\nu}\circ \ldots \circ \partial_t e^{-\f{t}{k}\mathcal L_\nu}}_{k \ \ \text{times}},
			\end{aligned}
	\] 
	which implies 
	\[
	\partial_t^k p_t^\nu(x,y) = k^k\int_{(0,\vc)^k}\partial_t^k p_{t/k}^\nu(x,z_1)\partial_t^k p_{t/k}^\nu(z_1,z_2)\ldots \partial_t^k p_{t/k}^\nu(z_{k-1},y)dz_1dz_2\ldots dz_{k-1}.
	\]
	Due to Proposition \ref{prop- time derivative of heat kernel},
	\[
	|\partial_t p_t^\nu(x,y)|\lesi H_{t,\gamma_\nu, c}(x,y), t>0, \ \ x,y>0
	\]
	for some $c$. 
	
	From this estimate and the fact that $H_{t,\gamma_\nu, c}(x,y)$ is in creasing in $c>0$, applying \eqref{eq-product H less than H} $k-1$ times, we come up with
	\[
	|\partial_t^k p_t^\nu(x,y)|\lesi H_{t/k,\gamma_\nu, 4^{k-1}c}(x,y), 
	\] 
	which ensures \eqref{eq-partial k heat kernel 1}.
	
	For \eqref{eq-partial k heat kernel 2}, we have
	\[
			\delta\partial_t^k e^{-t\mathcal L_\nu}= (k+1)^{k}\delta e^{-\f{t}{k+1}\mathcal L_\nu} \circ \underbrace{\partial_t e^{-\f{t}{k+1}\mathcal L_\nu}\circ \ldots \circ \partial_t e^{-\f{t}{k+1}\mathcal L_\nu}}_{k \ \ \text{times}}.
	\]
	Arguing similarly to the estimate of \eqref{eq-partial k heat kernel 1} by making use of Propositions \ref{prop- delta k pt} and \ref{prop- time derivative of heat kernel}, we obtain \eqref{eq-partial k heat kernel 2}.
	
	The proof of \eqref{eq-partial k heat kernel 3} is similar to that of \eqref{eq-partial k heat kernel 2}. Here we would like to bring an attention that since $p_t^{\nu+1}(x,y)$ satisfies the Gaussian upper bound due to \eqref{eq-ptxy nu > -1/2},  \cite[Lemma 2.5]{CD} tells us that $\partial_t p_t^{\nu+1}(x,y)$ satisfies the Gaussian upper bound. Of course, the Gaussian upper bound of $p_t^{\nu+1}(x,y)$ implies the similar upper bound as in Proposition \ref{prop- time derivative of heat kernel}.
	
	This completes our proof.
	
\end{proof}
\bigskip
\subsection{The case $n\ge 2$} 

\subsection{Kernel estimates}
Before coming to the estimate of the heat kernel $p_t^\nu(x,y)$ in $\mathbb R^n_+, n\ge 2$. We consider the following technical results.  Let $\sigma, \beta\in [0,1/2)$. For each $t>0$, define
\begin{equation}\label{eq-Tt}
	T_{t,\beta,\sigma}f(x)=\int_{(0,\vc)^n}T_{t,\beta,\sigma}(x,y)f(y)dy,
\end{equation}
where
\begin{equation}\label{eq-Tt kernel}
	T_{t,\beta,\sigma}(x,y)= \f{1}{t^{n/2}}\exp\Big(-\f{|x-y|^2}{ct}\Big)\prod_{j=1}^n\Big(1+\f{\sqrt t}{x_j}\Big)^\beta\Big(1+\f{\sqrt t}{y_j}\Big)^\sigma.
\end{equation}

\begin{lem}\label{lem Lpq}
	Let $\sigma\in (0,1/2)$. Let $T_{t,\beta,\sigma}$ be the linear operator defined by \eqref{eq-Tt} and \eqref{eq-Tt kernel}. Then for any $\f{1}{1-\sigma}<p\le q<\f{1}{\beta}$ and any ball $B\subset \mathbb R^n_+$, we have
	\begin{equation}
		\label{eq- eq1 Tt off diagonal}
		\|T_{t,\beta,\sigma}\|_{p\to q}\lesi t^{-\f{n}{2}(\f{1}{p}-\f{1}{q})},
	\end{equation}
	and
	\begin{equation}
		\label{eq- eq2 Tt off diagonal}
		\|T_{t,\beta,\sigma}\|_{L^p(B)\to L^q(S_j(B))}+\|T_{t,\beta,\sigma}\|_{L^p(S_j(B))\to L^q(B)}\lesi t^{-\f{n}{2}(\f{1}{p}-\f{1}{q})}\exp\Big(-\f{(2^jr_B)^2}{ct}\Big), j\ge 2.
	\end{equation}
\end{lem}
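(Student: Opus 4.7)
The plan is to reduce by scaling to the case $t=1$, and then prove $\|T_{1,\beta,\sigma}\|_{p\to q}\lesi 1$ by combining H\"older's inequality in $y$ with Minkowski's integral inequality in $x$. The substitution $x=\sqrt{t}u$, $y=\sqrt{t}v$ converts $T_{t,\beta,\sigma}$ into $T_{1,\beta,\sigma}$ acting on $g(v)=f(\sqrt{t}v)$ and gives $\|T_{t,\beta,\sigma}\|_{p\to q}=t^{-(n/2)(1/p-1/q)}\|T_{1,\beta,\sigma}\|_{p\to q}$, so it suffices to handle $t=1$. The hypotheses $p>1/(1-\sigma)$ and $q<1/\beta$ are equivalent to $\sigma p'<1$ and $\beta q<1$ respectively; these will play dual roles, controlling the near-$0$ integrability of the $\sigma$-weight against the exponent $p'$ and of the $\beta$-weight against the exponent $q$.

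For $T_{1,\beta,\sigma}$, H\"older in $y$ with exponents $p,p'$, after splitting $e^{-|x-y|^2/c}$ into its $1/p$- and $1/p'$-powers, yields
\[
|T_{1,\beta,\sigma}f(x)|\le \prod_j(1+1/x_j)^{\beta}\,I(x)^{1/p'}\,J(x)^{1/p},
\]
with $I(x)=\int e^{-|x-y|^2/c}\prod_j(1+1/y_j)^{\sigma p'}\,dy$ and $J(x)=\int e^{-|x-y|^2/c}|f(y)|^p\,dy$. The tensor structure of the Gaussian reduces $I(x)$ to one-dimensional factors $\int_0^\infty e^{-|x_j-y_j|^2/c}(1+1/y_j)^{\sigma p'}dy_j$, each bounded uniformly in $x_j>0$: near $y_j=0$ the integrand behaves like $y_j^{-\sigma p'}$, integrable since $\sigma p'<1$, while the Gaussian handles the tail. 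Raising the pointwise bound to the $q$-th power, integrating in $x$, and then applying Minkowski's integral inequality in $L^{q/p}(\prod_j(1+1/x_j)^{\beta q}dx)$ (legitimate since $q/p\ge 1$) to the remaining $y$-integral gives
\[
\|T_{1,\beta,\sigma}f\|_q^q\lesi \Big(\int|f(y)|^p\Big(\int e^{-q|x-y|^2/(cp)}\prod_j(1+1/x_j)^{\beta q}dx\Big)^{p/q}dy\Big)^{q/p}.
\]
The same one-dimensional argument, now using $\beta q<1$, makes the inner integral bounded uniformly in $y$; hence $\|T_{1,\beta,\sigma}f\|_q\lesi \|f\|_p$, establishing \eqref{eq- eq1 Tt off diagonal}.

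For the off-diagonal bound \eqref{eq- eq2 Tt off diagonal}, when $x\in B$ and $y\in S_j(B)$ (or vice versa) with $j\ge 2$ we have $|x-y|\ge 2^{j-2}r_B$, so
\[
e^{-|x-y|^2/(ct)}\le e^{-(2^jr_B)^2/(c't)}\,e^{-|x-y|^2/(2ct)};
\]
the first factor becomes the desired Gaussian decay, while the remaining kernel still belongs to the class $T_{t,\beta,\sigma}$ (with $c$ replaced by $2c$), so \eqref{eq- eq1 Tt off diagonal} applies. The main subtlety is the dual use of the two assumptions: H\"older in $y$ dispatches the $\sigma$-weight through $\sigma p'<1$, and Minkowski in $x$ dispatches the $\beta$-weight through $\beta q<1$. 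This is precisely what forces the two-sided range $\frac{1}{1-\sigma}<p\le q<\frac{1}{\beta}$ in the statement.
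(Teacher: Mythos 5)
Your argument is correct, but it takes a genuinely different route from the paper's. The paper proves \eqref{eq- eq1 Tt off diagonal} first in dimension one by splitting both variables at the threshold $\sqrt t$ into four regions, using the Gaussian bound on $[\sqrt t,\infty)^2$ and H\"older/Minkowski together with Lemma \ref{lem-elementary} on the remaining regions (this is where $\sigma p'<1$ and $\beta q<1$ enter), and then passes to $n\ge 2$ by writing $T_{t,\beta,\sigma}=T^1_{t,\beta,\sigma}\otimes\cdots\otimes T^n_{t,\beta,\sigma}$ and iterating the one-dimensional bound; for \eqref{eq- eq2 Tt off diagonal} it essentially redoes the computation with fixed reference points $x^0\in S_j(B)$, $y^0\in B$ after splitting the Gaussian into three factors. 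You instead rescale $x=\sqrt t\,u$, $y=\sqrt t\,v$ to reduce to $t=1$ (the factor $t^{-\frac n2(\frac1p-\frac1q)}$ comes out exactly, since $(0,\infty)^n$ is scale invariant), and then run a single Schur-type argument valid in all dimensions at once: H\"older in $y$ with exponents $(p',p)$ absorbs the weight $\prod_j(1+1/y_j)^{\sigma}$ via $\sigma p'<1$, and Minkowski's integral inequality in $L^{q/p}$ with respect to the measure $\prod_j(1+1/x_j)^{\beta q}dx$ (legitimate since $q\ge p$) absorbs the $\beta$-weight via $\beta q<1$; the required one-dimensional integral bounds are exactly those of Lemma \ref{lem-elementary}. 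You then obtain \eqref{eq- eq2 Tt off diagonal} as a corollary of \eqref{eq- eq1 Tt off diagonal} by noting $|x-y|\ge 2^{j-2}r_B$ on $S_j(B)\times B$ and peeling off half of the Gaussian, rather than repeating the estimate. Both proofs exploit the same two integrability conditions in the same dual roles, so the hypotheses appear for the same reason; what your version buys is economy (no separate $n=1$ case, no tensorization step, and the off-diagonal bound for free), while the paper's region-by-region computation keeps the dependence on $t$ and on the regions $\{x_j\lessgtr\sqrt t\}$ explicit, which is closer in spirit to how the kernel bounds are used elsewhere in the paper.
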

\begin{proof}
	Fix $\f{1}{1-\sigma}<p\le q<\f{1}{\beta}$. Let $f\in L^p(\mathbb R^n_+)$. We first prove \eqref{eq- eq1 Tt off diagonal} for $n=1$. Indeed, in this case we have
	\[
	\begin{aligned}
		\|T_{t,\beta,\sigma}f\|_{L^q(B)}&\lesi \Big[\int_{\sqrt t}^\vc\Big(\int_{\sqrt t}^\vc|T_{t,\beta,\sigma}(x,y)| |f(y)|dy\Big)^qdx\Big]^{1/q}\\
		& \ \ +\Big[\int_{\sqrt t}^\vc\Big(\int^{\sqrt t}_0|T_{t,\beta,\sigma}(x,y)| |f(y)|dy\Big)^qdx\Big]^{1/q}\\
		& \ \ +\Big[\int^{\sqrt t}_0\Big(\int_{\sqrt t}^\vc|T_{t,\beta,\sigma}(x,y)| |f(y)|dy\Big)^qdx\Big]^{1/q}\\
		& \ \ +\Big[\int^{\sqrt t}_0\Big(\int^{\sqrt t}_0|T_{t,\beta,\sigma}(x,y)| |f(y)|dy\Big)^qdx\Big]^{1/q}=:E_1+E_2+E_3+E_4.
	\end{aligned}
	\]
	Since for $x\ge \sqrt t$ and $y\ge \sqrt t$, the kernel $T_{t,\beta,\sigma}(x,y)$ satisfies the Gaussian upper bound, i.e.,
	\[
	|T_{t,\beta,\sigma}(x,y)|\lesi \f{1}{t^{n/2}}\exp\Big(-\f{|x-y|^2}{ct}\Big),
	\]
	as a well-known result,
	\[
	E_1\lesi t^{-\f{n}{2}(\f{1}{p}-\f{1}{q})}.
	\]
	For the term $E_2$, by the Minkowski inequality we have
	\[
	\begin{aligned}
		E_2 &\lesi \Big[\int_{\sqrt t}^\vc\Big(\int^{\sqrt t}_0\f{1}{\sqrt t}\exp\Big(-\f{|x-y|^2}{ct}\Big) \Big(1+\f{\sqrt t}{x}\Big)^\beta\Big(1+\f{\sqrt t}{y}\Big)^\sigma |f(y)|dy\Big)^qdx\Big]^{1/q}\\
		&\sim \Big[\int_{\sqrt t}^\vc\Big(\int^{\sqrt t}_0\f{1}{\sqrt t}\exp\Big(-\f{|x-y|^2}{ct}\Big)  \Big(1+\f{\sqrt t}{y}\Big)^\sigma |f(y)|dy\Big)^qdx\Big]^{1/q}\\
		&\lesi \f{1}{\sqrt t}\Big[\int_{\sqrt t}^\vc  \exp\Big(-\f{q|x-y|^2}{ct}\Big) dx\Big]^{1/q}\times \int^{\sqrt t}_0 \Big(1+\f{\sqrt t}{y}\Big)^\sigma |f(y)|dy.
	\end{aligned}
	\]
	By Lemma \ref{lem-elementary},
	\[
	\int_{\sqrt t}^\vc  \exp\Big(-\f{q|x-y|^2}{ct}\Big) dx \lesi \sqrt t.
	\]
	On the other hand, by the H\"older inequality, Lemma \ref{lem-elementary}  and the fact that $\f{1}{1-\sigma}<p, p'<\f{1}{\sigma}$, we have
	\[
	\begin{aligned}
		\int^{\sqrt t}_0 \Big(1+\f{\sqrt t}{y}\Big)^\sigma |f(y)|dy&\lesi \|f\|_p \Big[\int^{\sqrt t}_0 \Big(1+\f{\sqrt t}{y}\Big)^{\sigma p'}dy\Big]^{1/p'}\\
		&  \lesi t^{\f{1}{2p'}}\|f\|_p.
	\end{aligned}
	\]
	Hence,
	\[
	E_2\lesi t^{-\f{1}{2}+\f{1}{2q}+\f{1}{2p'}}\|f\|_p=t^{-\f{1}{2}(\f{1}{p}-\f{1}{q})}\|f\|_p.
	\]
	Similarly,
	\[
	\begin{aligned}
		E_3 &\lesi  \f{1}{\sqrt t}\Big[\int^{\sqrt t}_0\Big(1+\f{\sqrt t}{x}\Big)^{\beta q}\Big(\int_{\sqrt t}^\vc\exp\Big(-\f{|x-y|^2}{ct}\Big)   |f(y)|dy\Big)^qdx\Big]^{1/q}.
	\end{aligned}
	\]
	By the H\"older inequality and Lemma \ref{lem-elementary} ,
	\[
	\begin{aligned}
		\int_{\sqrt t}^\vc\exp\Big(-\f{|x-y|^2}{ct}\Big)   |f(y)|dy&\lesi \|f\|_p\Big(\int_{\sqrt t}^\vc\exp\Big(-\f{p'|x-y|^2}{ct}\Big) dy\Big)^{1/p'}\\
		&\lesi t^{\f{1}{2p'}}\|f\|_p.
	\end{aligned}
	\]
	Hence,
	\[
	\begin{aligned}
		E_3&\lesi t^{\f{1}{2p'}-\f{1}{2}}\|f\|_p\Big[\int^{\sqrt t}_0\Big(1+\f{\sqrt t}{x}\Big)^{\beta q} dx\Big]^{1/q}\\
		&\lesi t^{\f{1}{2p'}-\f{1}{2}+\f{1}{2q}}\|f\|_p=t^{-\f{1}{2}(\f{1}{p}-\f{1}{q})}\|f\|_p,
	\end{aligned}
	\]
	as long as $q<1/\beta.$
	
	For the last term, by the H\"older inequality and Lemma \ref{lem-elementary} ,
	\[
	\begin{aligned}
		E_3 &\lesi  \f{1}{\sqrt t}\Big[\int^{\sqrt t}_0\Big(1+\f{\sqrt t}{x}\Big)^{\beta q}dx\Big]^{1/q}\Big[\int^{\sqrt t}_0\Big(1+\f{\sqrt t}{y}\Big)^{\sigma }  |f(y)|dy\Big]^q\\
		&\lesi  \f{1}{\sqrt t}\Big[\int^{\sqrt t}_0\Big(1+\f{\sqrt t}{x}\Big)^{\beta q}dx\Big]^{1/q}\Big[\int^{\sqrt t}_0\Big(1+\f{\sqrt t}{y}\Big)^{\sigma p'}dy\Big]^{1/p'}\|f\|_p\\
		&\lesi t^{-\f{1}{2}+\f{1}{2q}+\f{1}{2p'}}\|f\|_p=t^{-\f{1}{2}(\f{1}{p}-\f{1}{q})}\|f\|_p.
	\end{aligned}
	\]
	This completes the proof of \eqref{eq- eq1 Tt off diagonal} for $n=1$.
	
	We now prove for $n\ge 2$. For each $j=1,\ldots, n$, define
	\[
	T_{t,\beta,\sigma}^j f(x_j) = \int_0^\vc T^j_{t,\beta,\sigma}(x_j,y_j)f(y_j)dy_j,
	\]
	where
	\[
	T^j_{t,\beta,\sigma}(x,y)= \f{1}{\sqrt t}\exp\Big(-\f{|x_j-y_j|^2}{ct}\Big)\Big(1+\f{\sqrt t}{x_j}\Big)^\beta\Big(1+\f{\sqrt t}{y_j}\Big)^\sigma.
	\]	
	Then for $f\in L^p(\mathbb R^n_+)$, we can write
	\[
	T_{t,\beta,\sigma}f(x) = T^1_{t,\beta,\sigma} \otimes \ldots \otimes T^n_{t,\beta,\sigma}f(x_1,\ldots, x_n). 
	\]
	At this stage, applying 	\eqref{eq- eq1 Tt off diagonal} (for $n=1$) $n$-times we will come up with
	\[
	\|T_{t,\beta,\sigma}f\|_{p\to q} \le \prod_{j=1}^n\|T^j_{t,\beta,\sigma}\|_{p\to q}\lesi t^{-\f{n}{2}(\f{1}{p}-\f{1}{q})}.
	\] 
	
	This completes the proof of \eqref{eq- eq1 Tt off diagonal}.
	
	\bigskip
	
	We now take care of \eqref{eq- eq2 Tt off diagonal}. We only prove 
	\[
	\|T_{t,\beta,\sigma}\|_{L^p(B)\to L^q(S_j(B))} \lesi t^{-\f{n}{2}(\f{1}{p}-\f{1}{q})}\exp\Big(-\f{(2^jr_B)^2}{ct}\Big), j\ge 2,
	\]
	since the estimate of $\|T_{t,\beta,\sigma}\|_{L^p(S_j(B))\to L^q(B)}$ can be done similarly.

	To do this, let $B$ be a ball in $\mathbb R^n_+$ and $f\in L^p(B)$. For $j\ge 2$, fix $x^0 \in S_j(B)$ and $y^0\in B$ then for $x\in S_j(B)$, $y\in B$  and $c>0$ we have
	\[
	|x-y|\sim |x-y^0|\sim |y-x^0|\sim 2^jr_B,
	\]
	which yields
	\[
	\exp\Big(-\f{|x-y|^2}{ct}\Big) \lesi \exp\Big(-\f{|x-y^0|^2}{c't}\Big)\exp\Big(-\f{(2^jr_B)^2}{c't}\Big) \exp\Big(-\f{|y-x^0|^2}{c't}\Big), t>0
	\]
	for some $c'>0$.
	
	Using this inequality, we have
	\[
	\begin{aligned}
		\|T_{t,\beta,\sigma}f\|_{L^q(S_j(B))} &\lesi \Big[\int_{S_j(B)} \Big(\int_B\f{1}{t^{n/2}}\exp\Big(-\f{|x-y|^2}{ct}\Big)\prod_{j=1}^n\Big(1+\f{\sqrt t}{x_j}\Big)^\beta\Big(1+\f{\sqrt t}{y_j}\Big)^\sigma |f(y)| dy\Big)^q dx\Big]^{1/q}\\
		&\lesi t^{-n/2}\exp\Big(-\f{(2^jr_B)^2}{c't}\Big) \Big[\int_{S_j(B)}\exp\Big(-\f{q|x-y^0|^2}{c't}\Big)\prod_{j=1}^n\Big(1+\f{\sqrt t}{x_j}\Big)^{\beta q} dx\Big]^{1/q}\\
		& \ \ \times \int_B \exp\Big(-\f{|y-x^0|^2}{c't}\Big)\prod_{j=1}^n\Big(1+\f{\sqrt t}{y_j}\Big)^{\sigma }|f(y)|dy.
	\end{aligned}
	\]
	From Lemma \ref{lem-elementary}, we imply the following inequality
	\begin{equation}\label{eq1}
		\int_0^\vc \exp\Big(\f{|x_j-z_j|^2}{ct}\Big)\Big(1+\f{\sqrt t}{x_j}\Big)^{\beta q}dx_j \lesi \sqrt t, 
	\end{equation}
	uniformly in $z_j>0$. Applying this inequality $n$-times, 
	\[
	\Big[\int_{S_j(B)}\exp\Big(-\f{q|x-y^0|^2}{c't}\Big)\prod_{j=1}^n\Big(1+\f{\sqrt t}{x_j}\Big)^{\beta q} dx\Big]^{1/q}\lesi t^{\f{n}{2q}}.
	\]
	Hence,
	\[
	\begin{aligned}
		\|T_{t,\beta,\sigma}f\|_{L^q(S_j(B))} &\lesi t^{-\f{n}{2}+\f{n}{2q}} \exp\Big(-\f{(2^jr_B)^2}{c't}\Big)\int_B \exp\Big(-\f{q|y-x^0|^2}{c't}\Big)\prod_{j=1}^n\Big(1+\f{\sqrt t}{y_j}\Big)^{\sigma }|f(y)|dy\\
		&\lesi t^{-\f{n}{2}+\f{n}{2q}} \exp\Big(-\f{(2^jr_B)^2}{c't}\Big)\Big[\int_B \exp\Big(-\f{p'|y-x^0|^2}{c't}\Big)\prod_{j=1}^n\Big(1+\f{\sqrt t}{y_j}\Big)^{\sigma p'} dy\Big]^{1/p'}\|f\|_p\\
		&\lesi t^{-\f{n}{2}+\f{n}{2q}+\f{n}{2p'}} \exp\Big(-\f{(2^jr_B)^2}{c't}\Big)\|f\|_p\\
		&\lesi t^{-\f{n}{2}(\f{1}{p}-\f{1}{q})} \exp\Big(-\f{(2^jr_B)^2}{c't}\Big)\|f\|_p,
	\end{aligned}
	\]
	where in the second inequality we used the H\"older inequality and in the third inequality we applied \eqref{eq1} $n$-times.
	
	This completes our proof. 
\end{proof}

We now turn to some estimates regarding the heat kernel $p_t^\nu(x,y)$. Let $\nu\in (-1,\vc)^n$. Recall that 
\begin{equation*} 
	\gamma_\nu = \begin{cases}
		\max\{-1/2-\nu_j: j\in \mathcal I_\nu\}, \ \ & \ \text{if} \  \mathcal I_\nu\ne \emptyset, \\
		0, \ \ &\ \text{if} \  \mathcal I_\nu= \emptyset,
	\end{cases}
\end{equation*}
where $\mathcal I_\nu=\{j: -1<\nu_j<-1/2\}$.

For each $j$, define
\begin{equation*} 
	\gamma_{\nu_j} = \begin{cases}
		-1/2-\nu_j, & \ -1<\nu_j<-1/2, \\
		0, \ \ &\ \nu_j\ge -1/2.
	\end{cases}
\end{equation*}
Then we have
\begin{equation}\label{eq-gamma max}
\gamma_\nu=\max\{\gamma_{\nu_1}, \ldots, \gamma_{\nu_n}\}.
\end{equation}

From \eqref{eq-gamma max}, Proposition \ref{prop- partial k heat kernel} and Lemma \ref{lem Lpq}, we have: 
\begin{prop}\label{prop-  heat kernel n ge 1}
	Let $\nu\in (-1,\vc)^n$ and $a\ge 0$. For $k\in \mathbb N$, then we have
	\begin{equation}\label{eq-partial k heat kernel 1 higher dimension}
		| \partial^k_tp^{\nu}_t(x,y)|\lesi \f{1}{t^{k+n/2}}\exp\Big(-\f{|x-y|^2}{ct}\Big)\prod_{j=1}^n\Big(1+\f{\sqrt t}{x_j} \Big)^{\gamma_{\nu_j}}\Big(1+\f{\sqrt t}{y_j} \Big)^{\gamma_{\nu_j}},
	\end{equation}
	\begin{equation}\label{eq-partial k heat kernel 2 higher dimension}
		| \delta\partial^k_tp^{\nu}_t(x,y)|\lesi \f{1}{t^{k+(n+1)/2}}\exp\Big(-\f{|x-y|^2}{ct}\Big) \prod_{j=1}^n\Big(1+\f{\sqrt t}{x_j} \Big)^{\gamma_{\nu_j}}\Big(1+\f{\sqrt t}{y_j} \Big)^{\gamma_{\nu_j}},
	\end{equation}
	and for $\ell=1,\ldots, n$,
	\begin{equation}\label{eq-partial k heat kernel 3 higher dimension}
		| \delta_\ell^*\partial^k_t[e^{-at}p^{\nu+e_\ell}_t(x,y)]|\lesi \f{1}{t^{k+(n+1)/2}}\exp\Big(-\f{|x-y|^2}{ct}\Big)\Big(1+\f{\sqrt t}{x_\ell} \Big)^{\gamma_{\nu_\ell}}\prod_{j\ne \ell }\Big(1+\f{\sqrt t}{x_j} \Big)^{\gamma_{\nu_j}}\Big(1+\f{\sqrt t}{y_j} \Big)^{\gamma_{\nu_j}}
	\end{equation}
	for all $t>0$ and $x,y\in \mathbb R^n_+$.
	\end{prop}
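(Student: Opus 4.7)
The plan is to reduce all three estimates to the one-dimensional bounds of Proposition \ref{prop- partial k heat kernel} via the tensor-product structure \eqref{eq- prod ptnu}, namely $p_t^\nu(x,y)=\prod_{j=1}^n p_t^{\nu_j}(x_j,y_j)$. Since each factor depends on $t$, the Leibniz (multinomial) rule gives, for any $k\in\mathbb N$,
\[
\partial_t^k p_t^\nu(x,y)=\sum_{k_1+\cdots+k_n=k}\binom{k}{k_1,\ldots,k_n}\prod_{j=1}^n \partial_t^{k_j}p_t^{\nu_j}(x_j,y_j),
\]
and the operators $\delta_\ell$, $\delta_\ell^*$ act only on the $x_\ell$ variable, hence touch only the $\ell$-th factor in the product. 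So each of \eqref{eq-partial k heat kernel 1 higher dimension}--\eqref{eq-partial k heat kernel 3 higher dimension} reduces to a finite sum of products of one-dimensional kernel bounds.

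For \eqref{eq-partial k heat kernel 1 higher dimension}, apply \eqref{eq-partial k heat kernel 1} to each factor $\partial_t^{k_j}p_t^{\nu_j}(x_j,y_j)$. The resulting product produces $t^{-\sum_j(k_j+1/2)}=t^{-(k+n/2)}$, the Gaussian factors telescope into $\exp(-|x-y|^2/(ct))$ via $|x-y|^2=\sum_j|x_j-y_j|^2$, and the weights combine exactly into $\prod_j(1+\sqrt t/x_j)^{\gamma_{\nu_j}}(1+\sqrt t/y_j)^{\gamma_{\nu_j}}$. For \eqref{eq-partial k heat kernel 2 higher dimension}, fix $\ell$ with $\delta=\delta_\ell$ (the bound then holds componentwise); after Leibniz, apply \eqref{eq-partial k heat kernel 2} to the $\ell$-th factor $\delta_\ell \partial_t^{k_\ell}p_t^{\nu_\ell}(x_\ell,y_\ell)$ and \eqref{eq-partial k heat kernel 1} to the remaining $n-1$ factors, gaining one extra power of $t^{-1/2}$ from $\delta_\ell$. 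This yields the prefactor $t^{-(k+(n+1)/2)}$ together with the claimed Gaussian and weight terms.

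For \eqref{eq-partial k heat kernel 3 higher dimension}, rewrite
\[
e^{-at}p_t^{\nu+e_\ell}(x,y)=\bigl[e^{-at}p_t^{\nu_\ell+1}(x_\ell,y_\ell)\bigr]\prod_{j\ne\ell}p_t^{\nu_j}(x_j,y_j),
\]
note that $\delta_\ell^*$ commutes with $\partial_t$ and with the factors for $j\ne \ell$, and apply the Leibniz rule in $t$. Estimate the distinguished $\ell$-factor by the one-dimensional bound \eqref{eq-partial k heat kernel 3}, which is designed precisely to absorb the exponential $e^{-at}$ and which produces only the $x_\ell$-weight $(1+\sqrt t/x_\ell)^{\gamma_{\nu_\ell}}$ (no $y_\ell$-weight), and estimate each remaining factor $\partial_t^{k_j}p_t^{\nu_j}(x_j,y_j)$ by \eqref{eq-partial k heat kernel 1}. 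Multiplying yields exactly the stated prefactor, Gaussian, and weight pattern.

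There is no substantial obstacle: the argument is essentially bookkeeping with the multinomial theorem and the factorization $|x-y|^2=\sum_j|x_j-y_j|^2$. The only point that deserves care is the third estimate, where the asymmetry in $y_\ell$ (i.e.\ the absence of $(1+\sqrt t/y_\ell)^{\gamma_{\nu_\ell}}$) tracks precisely with the asymmetry of \eqref{eq-partial k heat kernel 3} and must not be introduced artificially when bounding the other factors. Since $\gamma_\nu=\max_j\gamma_{\nu_j}$ by \eqref{eq-gamma max}, one may of course replace each $\gamma_{\nu_j}$ by $\gamma_\nu$ at the cost of a (harmless) enlargement; this observation will be useful for subsequent applications but is not needed for the bounds stated.
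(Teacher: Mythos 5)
Your proposal is correct and matches the paper's (implicit) argument: the paper likewise obtains these bounds by tensorizing the one-dimensional estimates of Proposition \ref{prop- partial k heat kernel} through the product formula \eqref{eq- prod ptnu}, with the Leibniz rule in $t$ and the identity $|x-y|^2=\sum_j|x_j-y_j|^2$ doing the bookkeeping exactly as you describe. Your careful handling of the asymmetric $x_\ell$-only weight in \eqref{eq-partial k heat kernel 3 higher dimension} is precisely the right point to watch, and nothing further is needed.
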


\section{Singular integrals in Laguerre settings}

\subsection{The maximal function}
\begin{proof}[Proof of Theorem \ref{main thm 0}:] If $\nu\ge -1/2$, then by Proposition \ref{prop-heat kernel} the heat kernel $p_t^\nu(x,y)$ satisfies the Gaussian upper bound. Hence, by Lemma \ref{lem-elementary}, we have
	\[
	\mathcal M_{\mathcal L_\nu}f\lesi \mathcal M f.
	\]
The theorem follows from the above in equality and Lemma \ref{Lem-maximalfunction}.

We now consider the case $-1<\nu<-1/2$. Fix $\f{1}{1-\gamma_\nu}:=(\f{1}{\gamma_\nu})'<p<\f{1}{\gamma_\nu}$ and $w\in A_{p(1-\gamma_\nu)}\cap RH_{(\f{1}{p\gamma_\nu})'}$. From Lemma \ref{lem 1}, we have, for $x\in \mathbb R^n_+$,
	\[
	\begin{aligned}
		\sup_{t>0}|e^{-t\mathcal L_\nu}f(x)|&\lesi \sup_{t>0}\int_0^\vc\f{1}{\sqrt t}\exp\Big(-\f{|x-y|^2}{ct}\Big)|f(y)|dy\\
		&\ \ + \int_0^{x/2} \f{1}{x}\Big(\f{x}{y}\Big)^{\gamma_\nu} |f(y)| dy +\int_{x/2}^{\vc} \f{1}{y}\Big(\f{y}{x}\Big)^{\gamma_\nu} f(y) dy\\
		&=: T_1f(x) +T_2f(x) + T_3f(x),
	\end{aligned}
	\]
	which implies
	\[
	\Big\|\sup_{t>0}|e^{-t\mathcal L_\nu}f(x)|\Big\|_{L^p_w(\mathbb R_+)}\le \|T_1f\|_{L^p_w(\mathbb R_+)} +\|T_2f\|_{L^p_w(\mathbb R_+)} + \|T_3f\|_{L^p_w(\mathbb R_+)}.
	\]
	By Lemma \ref{lem-elementary}, we have
	\[
	T_1f(x)\lesi \mathcal Mf(x), x\in \mathbb R^n,
	\]
	which implies
	\[
	\|T_1f\|_{L^p_w(\mathbb R_+)}\lesi \|f\|_{L^p_w(\mathbb R_+)},
	\]
	since $w\in w\in A_{p(1-\gamma_\nu)}\cap RH_{(\f{1}{p\gamma_\nu})'} \subset A_p(\mathbb R_+)$.
	
	In order to estimate $T_2f$ and $T_3f$, we note that due to Lemma \ref{weightedlemma1} (vii), $w^{1-p'}\in A_{p'(1-\gamma_\nu)}$. This, together with the fact $w\in A_{p(1-\gamma_\nu)}$ and Lemma \ref{weightedlemma1} (iv), implies that there exists $ \min\{p,p'\}>r>\f{1}{1-\gamma_\nu}= (\f{1}{\gamma_\nu})'$, which implies $r'<\f{1}{\gamma_\nu}$, such that $w\in A_{p/r}$ and $w^{1-p'}\in A_{p'/r}$.

	We now turn to take care of $T_2f$ and $T_3f$. By  the H\"older inequality and the fact $\gamma_\nu r'<1$,
	\begin{equation}\label{eq-Tnu}
		\begin{aligned}
			|T_2 f(x)|&\le \int_0^{2x} \f{1}{x}\Big(\f{x}{y}\Big)^{\gamma_\nu}|f(y)|dy\\
			&\le \Big[\f{1}{x}\int_0^{2x}  |f(y)|^{r}dy\Big]^{1/r}\Big[\f{1}{x}\int_0^{2x}\Big(\f{x}{y}\Big)^{\gamma_\nu r'}   dy\Big]^{1/r'_1} \\
			&\lesi \Big[\f{1}{x}\int_0^{2x}  |f(y)|^{r}dy\Big]^{1/r}\\
			&\lesi \mathcal M_{r}f(x).
		\end{aligned}
	\end{equation}
	This, in combination with Lemma \ref{Lem-maximalfunction}, yields
	\[
	\|T_2f\|_{L^p_w(\mathbb R_+)}\lesi \|f\|_{L^p_w(\mathbb R_+)}
	\]
	since $w\in A_{p/r}$.
	
	The term $T_3f$ can be done by duality. Let $g\in L^{p'}_{w^{1-p'}}(\mathbb R^n_+)$. Then we have
	\[
	\begin{aligned}
		\langle T_3 f,g \rangle &=\int_0^\vc\int_{x/2}^{\vc}g(x) \f{1}{y}\Big(\f{y}{x}\Big)^{\gamma_\nu} f(y) dy dx\\
		&\lesi \int_0^\vc|f(y)|\int_{0}^{2y} \f{1}{y}\Big(\f{y}{x}\Big)^{\gamma_\nu}|g(x)|  dx dy
	\end{aligned}
	\]
	Similarly to \eqref{eq-Tnu}, since $\gamma_\nu r'<1$ we have
	\[
	\int_{0}^{2y} \f{1}{y}\Big(\f{y}{x}\Big)^{\gamma_\nu}|g(x)|  dx\lesi \mathcal M_{r}g(y),
	\]	
	which implies
	\[
	\begin{aligned}
		\langle T_3 f,g \rangle &\lesi \langle \mathcal M_{r}g(y),f \rangle\\
		& \lesi \|\mathcal M_{r}g\|_{L^{p'}_{w^{1-p'}}(\mathbb R^n_+)}\|f\|_{L^{p}_{w}(\mathbb R^n_+)}\\
		&\lesi \|g\|_{L^{p'}_{w^{1-p'}}(\mathbb R^n_+)}\|f\|_{L^{p}_{w}(\mathbb R^n_+)},
	\end{aligned}
	\]
	where in the last inequality we used Lemma \ref{Lem-maximalfunction} since $w^{1-p'}\in A_{p'/r}$.
	
	It follows that
	\[
	\|T_3f\|_{L^p_w(\mathbb R_+)}\lesi \|f\|_{L^p_w(\mathbb R_+)}.
	\]
	Therefore, the proof is completed.
\end{proof}

\subsection{The Riesz transform}

\begin{proof}[Proof of Theorem \ref{main thm 1}:]
	%We will prove the theorem assuming temporarily that   the Riesz transform is bounded on $L^p(\mathbb R^n)$ for all $\f{1}{1-\gamma_\nu}<p<\f{1}{\gamma_\nu}$.  We now fix $\f{1}{1-\gamma_\nu}<p<\f{1}{\gamma_\nu}$ and $w\in A_{p(1-\gamma_\nu)}\cap RH_{(\f{1}{p\gamma_\nu})'}$. By Lemma \ref{weightedlemma1} (iv) \& (v), there exist $p_0$ and $q_0$ such that $\f{1}{1-\gamma_\nu}<p_0<p<q_0<\f{1}{\gamma_\nu}$ and $w\in A_{\f{p}{p_0}}\cap RH_{(\f{q_0}{p})'}$. 
	
	We first prove that $R_\nu$ is bounded on $\frac{1}{1-\gamma_\nu}<p\le 2$.   Fix $m\in \mathbb N$ such that $2m-1>n$. In the light of Theorem \ref{BZ-thm}, it suffices to prove that for any $\f{1}{1-\gamma_\nu}<p_0<2$  
	\begin{equation}\label{eq1-proof of mainthm}
		\begin{aligned}
			\Big(\fint_{S_j(B)}|R_\nu(I-e^{-r_B^2\mathcal L_\nu})^mf|^{2} dx \Big)^{1/2}\lesi 2^{-j(2m-1)}\Big(\fint_B |f|^{2}dx\Big)^{1/2}
		\end{aligned}
	\end{equation}
	and
		\begin{equation}\label{eq2-proof of mainthm}
		\begin{aligned}
			\Big(\fint_{S_j(B)}|[I-(I-e^{-r_B^2\mathcal L_\nu})^m]f|^{2}dx\Big)^{1/2}\leq
	2^{-j(n+1)}\Big(\fint_B |f|^{p_0}dx\Big)^{1/p_0} 
\end{aligned}
\end{equation}
	for all $j\ge 2$ and $f\in L^p(\mathbb R^n_+)$ supported in $B$.

Note that	\[
	I-(I-e^{-r_B^2\mathcal L_\nu})^m =\sum_{k=1}^m c_ke^{-kr_B^2\mathcal L_\nu}, 
	\]
	where $c_k$ are constants.
	
	This, along with Proposition \ref{prop-  heat kernel n ge 1}, implies \eqref{eq2-proof of mainthm}. It remains to prove \eqref{eq1-proof of mainthm}. To do this, recall that
	\[
	R_\nu = c\int_0^\vc \sqrt t \delta e^{-t\mathcal L_\nu} \f{dt}{t}
	\]
	and
	\[
	(I-e^{-r_B^2\mathcal L_\nu})^m=\int_{[0,r_B^2]^m}\mathcal L_\nu^m e^{-(s_1+\ldots+s_m)\mathcal L_\nu}d\vec{s},
	\]
	where $d\vec{s}=ds_1\ldots ds_m$.
	
	Hence, we can write
	\[
	R_\nu(I-e^{-r_B^2\mathcal L_\nu})^mf =c\int_0^\vc\int_{[0,r_B^2]^m} \sqrt t \delta \mathcal L_\nu^{m}e^{-(t+s_1+\ldots+s_m)\mathcal L_\nu} fds\f{dt}{t}. 
	\]
	Applying  Proposition \ref{prop-  heat kernel n ge 1} and Lemma \ref{lem Lpq}, we obtain
	\[
	\begin{aligned}
		\|R_\nu&(I-e^{-r_B^2\mathcal L_\nu})^mf\|_{L^{2}(S_j(B))}\\&\lesi \int_0^\vc\int_{[0,r_B^2]^m} \sqrt t \big\|\delta \mathcal L_\nu^{m}e^{-(t+s_1+\ldots+s_m)\mathcal L_\nu} f\big\|_{L^{2}(S_j(B))}ds\f{dt}{t}\\
		&\lesi \|f\|_{L^{2}(B)}\int_0^\vc\int_{[0,r_B^2]^m} \f{\sqrt t}{(t+|\vec s|)^{m+\f{1}{2}}}\exp\Big(-\f{(2^jr_B)^2}{c(t+|\vec s|)}\Big)  d\vec{s}\f{dt}{t}\\
		&\lesi \|f\|_{L^{2}(B)}\int_0^{r_B^2}\ldots \f{dt}{t}+\|f\|_{L^{2}(B)}\int_{r_B^2}^\vc\ldots \f{dt}{t}\\
		&=: E_1 + E_2,
	\end{aligned}
	\]
	where $|\vec s| =s_1+\ldots+s_m$.
	
	For the first term $E_1$, we have
	\[
	\begin{aligned}
		E_1&\lesi \|f\|_{L^{2}(B)}\int_0^{r_B^2}\int_{[0,r_B^2]^m} \f{\sqrt t}{(t+|\vec s|)^{m +\f{1}{2}}} \Big( \f{t+|\vec s|}{(2^jr_B)^2}\Big)^{m  +\f{1}{2}}  d\vec{s}\f{dt}{t}\\
		 &\lesi \|f\|_{L^{2}(B)}\int_0^{r_B^2}\int_{[0,r_B^2]^m}  \f{\sqrt t}{(2^jr_B)^{2m+1}}   d\vec{s}\f{dt}{t}\\
		 &\sim \|f\|_{L^{2}(B)} \f{r_B^{2m+1}}{(2^jr_B)^{2m+1}}\\
		 &\sim  2^{-j(2m+1)}\|f\|_{L^{2}(B)} .
	\end{aligned}
	\]
	For the second term, note that $t+s_1+\ldots + s_m \sim t $ as $t\ge r_B^2$ and $s_1,\ldots, s_m\in [0,r_B^2]$. It follows that 
	\[
	\begin{aligned}
		E_2&\lesi \|f\|_{L^{2}(B)}\int_{r_B^2}^\vc\int_{[0,r_B^2]^m} \f{\sqrt t}{t^{m +\f{1}{2}}} \Big( \f{t}{(2^jr_B)^2}\Big)^{m  -\f{1}{2}}  d\vec{s}\f{dt}{t}\\
		&\sim 2^{-j(2m-1)}\|f\|_{L^{2}(B)}.
	\end{aligned}
	\]
	Collecting the estimates of $E_1$ and $E_2$, we deduce to \eqref{eq1-proof of mainthm}.

	\bigskip
	
Next we need to prove that $R_\nu^j$ is bounded on $L^p$ for $2< p < \f{1}{\gamma_{\nu+e_j}}$. By duality, it suffices to prove $(R^j_\nu)^*:=\mathcal L_\nu^{-1/2}\delta_j^*$ is bounded on $L^p(\mathbb R^n_+)$ for $\f{1}{1-\gamma_{\nu+e_j}}<p_0\le 2$. To do this, fix $\f{1}{1-\gamma_{\nu+e_j}}<p_0\le 2$, $m\in \mathbb N$ such that $2m-1>n$, by Theorem \ref{BZ-thm}, we need to show that  
	\begin{equation}\label{eq1-proof of mainthm dual Riesz transform}
		\begin{aligned}
			\Big(\fint_{S_j(B)}|(R^j_\nu)^*(I-e^{-r_B^2(\mathcal L_{\nu+e_j}+2)})^mf|^2 dx \Big)^{1/2}\lesi 2^{-j(2m-1)}\Big(\fint_B |f|^2dx\Big)^{1/2}
		\end{aligned}
	\end{equation}
	and
			\begin{equation}\label{eq2-proof of mainthm dual}
		\begin{aligned}
			\Big(\fint_{S_j(B)}|[I-(I-e^{-r_B^2(\mathcal L_{\nu+e_j}+2)})^m]f|^{2}dx\Big)^{1/2}\leq
			2^{-j(n+1)}\Big(\fint_B |f|^{p_0}dx\Big)^{1/p_0} 
		\end{aligned}
	\end{equation}
	for all $j\ge 2$ and $f\in L^2(\mathbb R^n_+)$ supported in $B$.
	
	The estimate \eqref{eq2-proof of mainthm dual} follows directly from Proposition \ref{prop-  heat kernel n ge 1} and Lemma \ref{lem Lpq} and the following expansion
	\[
	I-(I-e^{-r_B^2(\mathcal L_{\nu+e_j}+2)})^m=\sum_{k=1}^mc_k e^{-kr_B^2(\mathcal L_{\nu+e_j}+2)},
	\]
	where $c_k$ are constants.
	
	Hence, we need to prove \eqref{eq1-proof of mainthm dual Riesz transform}. We first note that for $f\in L^2(\mathbb R^n_+)$ and $\nu \in (-1,\vc)^n$ we have, for $t>0$,
	\begin{equation*}%\label{eq- etL formula}
		\mathcal L_{\nu}^{-1/2} e^{-t\mathcal L_{\nu} }f =\sum_{k\in \mathbb N^n} \f{e^{-t(4|k|+2|\nu|+2n)}}{\sqrt{4|k|+2|\nu|+2n}}\langle f,\varphi^\nu_k\rangle \varphi^\nu_k.
	\end{equation*}
	Using this and \eqref{eq- delta and eigenvector}, by a simple calculation we come up with
	
	\[
	e^{-t\mathcal L_{\nu}} \mathcal L_\nu^{-1/2}\delta_j^*(I-e^{-r_B^2(\mathcal L_{\nu+e_j}+2}))^mf= \delta^*_j e^{-t (\mathcal L_{\nu+e_j}+2)}(\mathcal L_{\nu+e_j}+2)^{-1/2}(I-e^{-r_B^2(\mathcal L_{\nu+e_j}+2}))^mf.	
	\]
	At this stage, arguing similarly to the proof of \eqref{eq1-proof of mainthm} we come up with \eqref{eq1-proof of mainthm dual Riesz transform}.
	
	We now turn to the weighted estimate for the Riesz transform $R^j_\nu$. By duality it suffices to show that $(R^j_\nu)^*$ is bounded on $L^p_w(\mathbb R^n_+)$ for all $\f{1}{1-\gamma_{\nu+e_j}}<p<\f{1}{\gamma_\nu}$ and $w\in A_{\f{p}{1-\gamma_{\nu+e_j}}}\cap RH_{(\f{1}{p\gamma_\nu})'}$. We now fix $\f{1}{1-\gamma_{\nu+e_j}}<p<\f{1}{\gamma_\nu}$ and $w\in A_{\f{p}{1-\gamma_{\nu+e_j}}}\cap RH_{(\f{1}{p\gamma_\nu})'}$. By Lemma \ref{weightedlemma1}, we can find $\f{1}{1-\gamma_{\nu+e_j}}<p_0<p<q_0<\f{1}{\gamma_\nu}$ such that $w\in A_{\f{p_0}{1-\gamma_{\nu+e_j}}}\cap RH_{(\f{1}{q_0\gamma_\nu})'}$. Similarly to \eqref{eq1-proof of mainthm dual Riesz transform} and \eqref{eq2-proof of mainthm dual} we also obtain
	\begin{equation*}%\label{eq1-proof of mainthm dual Riesz transform 2}
		\begin{aligned}
			\Big(\fint_{S_j(B)}|(R^j_\nu)^*(I-e^{-r_B^2(\mathcal L_{\nu+e_j}+2)})^mf|^2 dx \Big)^{1/q_0}\lesi 2^{-j(2m-1)}\Big(\fint_B |f|^2dx\Big)^{1/q_0}
		\end{aligned}
	\end{equation*}
	and
	\begin{equation*}%\label{eq2-proof of mainthm dual2}
		\begin{aligned}
			\Big(\fint_{S_j(B)}|[I-(I-e^{-r_B^2(\mathcal L_{\nu+e_j}+2)})^m]f|^{2}dx\Big)^{1/2}\leq
			2^{-j(n+1)}\Big(\fint_B |f|^{p_0}dx\Big)^{1/p_0} 
		\end{aligned}
	\end{equation*}
	for all $j\ge 2$ and $f\in L^2(\mathbb R^n_+)$ supported in $B$.
	
	Hence, by Theorem \ref{BZ-thm}, $(R^j_\nu)^*$ is bounded on $L^{p}_w(\mathbb R^n_+)$ with $w\in A_{\f{p_0}{1-\gamma_{\nu+e_j}}}\cap RH_{(\f{1}{q_0\gamma_\nu})'}$.
	 
	This completes our proof. 
\end{proof}

\subsection{The square functions}

\begin{proof}[Proof of Theorem \ref{main thm 2}:]
	We only give the proof for the square function $S^j_{\mathcal L_\nu}$ since the proof for the square function $G_{\mathcal L_\nu}$ is similar and even easier.
	
	 From the $L^2(\mathbb R^n_+)$ boundedness of the Riesz transform $\delta \mathcal L_\nu^{-1/2}$, we have
	\[
	\begin{aligned}
		\|S^j_{\mathcal L_\nu}f\|_{2} &=\Big(\int_0^\vc \|\sqrt t \delta_j e^{-t\mathcal L_\nu}f\|_2^2\f{dt}{t} \Big)^{1/2}\\
		&\lesi \Big(\int_0^\vc \|\sqrt t \mathcal L_\nu^{1/2} e^{-t\mathcal L_\nu}f\|_2^2\f{dt}{t} \Big)^{1/2}\\
		&\lesi \Big\|\Big(\int_0^\vc |\sqrt t \mathcal L_\nu^{1/2} e^{-t\mathcal L_\nu}f|^2\f{dt}{t} \Big)^{1/2}\Big\|_2\\
		&\lesi \|f\|_2,
	\end{aligned}
	\]
	where in the last inequality we used the $L^2$-boundedness of the following square function 
	\[
	f\mapsto \Big(\int_0^\vc |\sqrt t \mathcal L_\nu^{1/2} e^{-t\mathcal L_\nu}f|^2\f{dt}{t} \Big)^{1/2},
	\]
	since $\mathcal L_\nu$ is a nonnegative self-adjoint in $L^2(\mathbb R^n_+)$.
	
	Similarly to the proofs of \eqref{eq1-proof of mainthm} and \eqref{eq2-proof of mainthm}, we can show that $S^j_{\mathcal L_\nu}$ also satisfies the estimates \eqref{eq1-proof of mainthm} and \eqref{eq2-proof of mainthm} with $q_0=2$ and any $\f{1}{1-\gamma}<p_0<2$,  and $S^j_{\mathcal L_\nu}$ taking place of $R_\nu$. Hence, by Theorem \ref{BZ-thm},  $S^j_{\mathcal L_\nu}$ is bounded on  $L^p(\mathbb R^n_+)$ for $\f{1}{1-\gamma_\nu}<p\le 2$.
	
	To complete our proof, we will prove that $S^j_{\mathcal L_\nu}$ is bounded on $L^p(\mathbb R^n_+)$ for $2<p<\f{1}{\gamma_{\nu+e_j}}$. To do this, in the light of Theorem \ref{Martell-thm}, we need to prove the following inequalities  with $2<q_0<\f{1}{\gamma_{\nu+e_j}}$ and $m\in \mathbb N$ such that $2m>n+1$,
	 	\begin{eqnarray}\label{e1-Martell SL}
	 	\Big( \fint_{B} \big| S^j_{\mathcal L_\nu}(I-e^{-r_B^2\mathcal L_\nu})^mf\big|^{2}dx\Big)^{1/2} \leq
	 	C \mathcal{M}_{2}(f)(x),
	 \end{eqnarray}
	 and
	 \begin{eqnarray}\label{e2-Martell SL}
	 	\Big( \fint_{B} \big| S^j_{\mathcal L_\nu}[I-(I-e^{-r_B^2\mathcal L_\nu})^m]f\big|^{q_0}dx\Big)^{1/q_0} \leq
	 	C \mathcal{M}_{2}(|Tf|)(x),
	 \end{eqnarray}
	For \eqref{e1-Martell SL}, we write
	\[
	\begin{aligned}
		\Big( \fint_{B} \big| S^j_{\mathcal L_\nu}(I-e^{-r_B^2\mathcal L_\nu})^mf\big|^{2}dx\Big)^{1/2}\le \sum_{i=0}^\vc \Big( \fint_{B} \big| S^j_{\mathcal L_\nu}(I-e^{-r_B^2\mathcal L_\nu})^m(f\chi_{S_i(B)})\big|^{2}dx\Big)^{1/2}.
	\end{aligned}
	\]
 	For $i=0,1,2$, by the $L^2$-boundedness of $S^j_{\mathcal L_\nu}$ and $e^{-t\mathcal L_\nu}$, we have
 	\[
 	\begin{aligned}
 		\Big( \fint_{B} \big| S^j_{\mathcal L_\nu}(I-e^{-r_B^2\mathcal L_\nu})^m(f\chi_{S_i(B)})\big|^{2}dx\Big)^{1/2}&\lesi \Big(\f{1}{|B|}\int_{S_i(B)} |f|^2 dx\Big)^{1/2}\\
 		&\lesi \mathcal M_2 f(x).
 	\end{aligned}
  	\]
	For $j\ge 3$,  similarly to \eqref{eq1-proof of mainthm}, we arrive at
	\[
	\begin{aligned}
		\Big( \fint_{B} \big| S^j_{\mathcal L_\nu}(I-e^{-r_B^2\mathcal L_\nu})^m(f\chi_{S_i(B)})\big|^{2}dx\Big)^{1/2}&\lesi 2^{-i(2m-1-n/2)}\Big(\fint_{S_i(B)} |f|^2 dx\Big)^{1/2}\\
		&\lesi 2^{-i(2m-1-n/2)}\mathcal M_2 f(x).
	\end{aligned}
	\]
	It follows \eqref{e1-Martell SL}.

	It remains to prove \eqref{e2-Martell SL}. Indeed, we have
		$$
	\begin{aligned}
		\Big(\int_B &|S^j_{\mathcal L_\nu}[I-(I-e^{-r_B^2\mathcal L_\nu})^m]f(x)|^{q_0}dx\Big)^{1/q_0}\\
		&\lesi \sum_{k=1}^m \Big(\int_B |S^j_{\mathcal L_\nu}e^{-kr_B^2\mathcal L_\nu}f(x)|^{q_0}dx\Big)^{1/q_0}\\
		&\lesi \sup_{1\leq k\leq m}\left[\int_B \left( \int_0^\vc |(\sqrt t \delta_j)e^{-kr_B^2\mathcal L_\nu}e^{-t\mathcal L_\nu}f(x)|^2\f{dt}{t} \right)^{q_0/2}dx\right]^{1/q_0}.
	\end{aligned}
	$$
	On the other hand, note that for $f\in L^2(\mathbb R^n_+)$ and $\nu \in (-1,\vc)^n$ we have, for $t>0$,
	\begin{equation*}%\label{eq- etL formula}
		e^{-t\mathcal L_{\nu} }f =\sum_{k\in \mathbb N^n} e^{-t(4|k|+2|\nu|+2n)}\langle f,\varphi^\nu_k\rangle \varphi^\nu_k.
	\end{equation*}
	Using this and \eqref{eq- delta and eigenvector}, by a simple calculation we come up with
	\[
	e^{-t(\mathcal L_{\nu + e_j}+\alpha_j)}\delta_j f  = \delta_j e^{-t \mathcal L_\nu} f,
	\]
	where $\alpha_j = 2$ if $\nu_j\ge 0$ and $\alpha_j=2-4\nu_j$ if $\nu_j<0$.
	
	It follows that 
	\[
	(\sqrt t \delta_j)e^{-kr_B^2\mathcal L_\nu}e^{-t\mathcal L_\nu}f= e^{-kr_B^2(\mathcal L_{\nu+e_j}+\alpha_j)}(\sqrt t \delta_j)e^{-t\mathcal L_\nu}f.
	\]
	Consequently,
	$$
	\begin{aligned}
		\Big(\int_B &|S^j_{\mathcal L_\nu}[I-(I-e^{-r_B^2\mathcal L_\nu})^m]f(x)|^{q_0}dx\Big)^{1/q_0}\\
		&\lesi \sup_{1\leq k\leq m}\left[\int_B \left( \int_0^\vc |e^{-kr_B^2(\mathcal L_{\nu+e_j}+\alpha_j)}(\sqrt t \delta_j)e^{-t\mathcal L_\nu}f(x)|^2\f{dt}{t} \right)^{q_0/2}dx\right]^{1/q_0}\\
		&\lesi\sum_{i\geq 0}\sup_{1\leq k\leq m}\left[\int_B \left( \int_0^\vc \left|e^{-kr_B^2(\mathcal L_{\nu+e_j}+\alpha_j)}[(\sqrt t \delta_j)e^{-t\mathcal L_\nu}f\chi_{S_i(B)}](x)\right|^2\f{dt}{t} \right)^{q_0/2}dx\right]^{1/q_0}
	\end{aligned}
	$$
	which along with Minkowski's inequality, Proposition \ref{prop-  heat kernel n ge 1} and Lemma \ref{lem Lpq} gives
	$$
	\begin{aligned}
		\Big(\int_B &|S^j_{\mathcal L_\nu}[I-(I-e^{-r_B^2\mathcal L_\nu})^m]f(x)|^{q_0}dx\Big)^{1/q_0}\\
		&\lesi\sum_{i\geq 0}\sup_{1\leq k\leq m}\left( \int_0^\vc \left\|e^{-kr_B^2(\mathcal L_{\nu+e_j}+\alpha_j)}[(\sqrt t \delta_j)e^{-t\mathcal L_\nu}f\chi_{S_i(B)}]\right\|_{L^{q_0}(B)}^2\f{dt}{t} \right)^{1/2}\\
		&\lesi\sum_{i\geq 0} e^{-c4^i}|B|^{-(\f{1}{2}-\f{1}{q_0})}\left( \int_0^\vc \left\|(\sqrt t \delta_j)e^{-t\mathcal L_\nu}f\right\|_{L^2(S_i(B))}^2\f{dt}{t} \right)^{1/2}\\
		&\lesi\sum_{j\geq 0} e^{-c4^i}|B|^{-(\f{1}{2}-\f{1}{q_0})}\Big(\int_{2^iB} |S_{\mathcal L_\nu}f(x)|^{2}dx\Big)^{1/2}.
	\end{aligned}
	$$
	This implies \eqref{e2-Martell SL}. 
	
	We have proved that the square function  $S^j_{\mathcal L_\nu}$ is bounded on $L^p(\mathbb R^n_+)$ for $\f{1}{1-\gamma_\nu}<p<\f{1}{\gamma_{\nu+e_j}}$.
	
	For the weighted estimate, we fix  $\f{1}{1-\gamma_\nu}<p<\f{1}{\gamma_{\nu+e_j}}$ and $w\in A_{p(1-\gamma_\nu)}\cap RH_{(\f{1}{p\gamma_{\nu+e_j}})'}$. By Lemma \ref{weightedlemma1}, we can there exists $\f{1}{1-\gamma_\nu}<p_0<p<q_0<\f{1}{\gamma_{\nu+e_j}}$ such that $w\in A_{p/p_0}\cap RH_{(\f{q_0}{p})'}$.
	
	Therefore, by Theorem \ref{Martell-thm}, it suffices to verify the following estimates
	we need to prove that
		 	\begin{eqnarray}\label{e1-Martell SL s}
		\Big( \fint_{B} \big| S^j_{\mathcal L_\nu}(I-e^{-r_B^2\mathcal L_\nu})^mf\big|^{p_0}dx\Big)^{1/p_0} \leq
		C \mathcal{M}_{p_0}(f)(x)
	\end{eqnarray}
	and
	 \begin{eqnarray}\label{e2-Martell SL s}
		\Big( \fint_{B} \big| S^j_{\mathcal L_\nu}[I-(I-e^{-r_B^2\mathcal L_\nu})^m]f\big|^{q_0}dx\Big)^{1/q_0} \leq
		C \mathcal{M}_{p_0}(|Tf|)(x).
	\end{eqnarray}
The proof of \eqref{e2-Martell SL s} is similar to \eqref{e2-Martell SL} (see also Theorem 7.2 in \cite{AM2}) by using Lemma 7.4 in \cite{AM2}, Proposition \ref{prop-  heat kernel n ge 1} and the $L^{p_0}-L^{p_0}$-off diagonal estimates instead of $L^{2}-L^{q_0}$-off diagonal estimates in Lemma \ref{lem Lpq}.

It remains to prove \eqref{e1-Martell SL s}. To do this, we write
	\[
	\begin{aligned}
		\Big( \fint_{B} \big| S^j_{\mathcal L_\nu}(I-e^{-r_B^2\mathcal L_\nu})^mf\big|^{p_0}dx\Big)^{1/p_0}\le \sum_{i=0}^\vc \Big( \fint_{B} \big| S^j_{\mathcal L_\nu}(I-e^{-r_B^2\mathcal L_\nu})^m(f\chi_{S_i(B)})\big|^{p_0}dx\Big)^{1/p_0}.
	\end{aligned}
	\]
	For $i=0,1,2$, by the $L^{p_0}$-boundedness of $S^j_{\mathcal L_\nu}$ and $e^{-t\mathcal L_\nu}$, we have
	\[
	\begin{aligned}
		\Big( \fint_{B} \big| S^j_{\mathcal L_\nu}(I-e^{-r_B^2\mathcal L_\nu})^m(f\chi_{S_i(B)})\big|^{p_0}dx\Big)^{1/p_0}&\lesi \Big(\f{1}{|B|}\int_{S_i(B)} |f|^{p_0} dx\Big)^{1/p_0}\\
		&\lesi \mathcal M_{p_0} f(x).
	\end{aligned}
	\]
	For $j\ge 3$, by the Holder's inequality we have
	\[
		\Big( \fint_{B} \big| S^j_{\mathcal L_\nu}(I-e^{-r_B^2\mathcal L_\nu})^m(f\chi_{S_i(B)})\big|^{p_0}dx\Big)^{1/p_0}\le 	\Big( \fint_{B} \big| S^j_{\mathcal L_\nu}(I-e^{-r_B^2\mathcal L_\nu})^m(f\chi_{S_i(B)})\big|^{2}dx\Big)^{1/2}.
	\]
	At this stage, similarly to \eqref{eq1-proof of mainthm} by using the $L^{p_0}-L^2$-off diagonal estimates instead of $L^2-L^2$-off diagonal estimate we obtain  \eqref{e1-Martell SL s}.

	Hence the proof is complete.
\end{proof}

{\bf Acknowledgement.} The author was supported by the research grant ARC DP140100649 from the Australian Research Council. The author would like to thank Jorge Betancor for useful discussion.

\end{document}